\newcommand{\del}[1]{}
\newcommand{\add}[1]{#1}
\newcommand{\ed}[1]{}
\newtheorem{observation}{Remark}[section]
\newtheorem{lemma}[observation]{Lemma}  
\newtheorem{theorem}[observation]{Theorem}
\newtheorem{definition}[observation]{Definition}
\newtheorem{proposition}[observation]{Proposition} 
\newtheorem{corollary}[observation]{Corollary}
\numberwithin{equation}{section}
\newcommand*{\emptybox}{\leavevmode\hbox{}}
\newcommand{\<}{\langle}
\renewcommand{\>}{\rangle}
\newcommand{\A}{\ensuremath{\mathbb A}\xspace}
\newcommand{\B}{\ensuremath{\mathbb B}\xspace}
\newcommand{\C}{\ensuremath{\mathbb C}\xspace}
\newcommand{\D}{\ensuremath{\mathbb D}\xspace}
\newcommand{\J}{\ensuremath{\mathbb J}\xspace}
\newcommand{\N}{\ensuremath{\mathbb N}\xspace}
\newcommand{\p}{\pi}
\newcommand{\zq}{0_{\sf q}}
\newcommand{\pq}{+_{\sf q}}
\newcommand{\rarr}{\rightarrow}
\newcommand{\Aff}{\ensuremath{\operatorname{\textnormal{\textsf{Aff}}}}}
\newcommand{\AAff}{\ensuremath{\operatorname{\textnormal{$\mathbb{A}$\textsf{ff}}}}}
\newcommand{\Geom}{\mbox{\sf Geom}}
\newcommand{\GeomTf}{\Geom_{\sf tf}}
\newcommand{\GeomFlat}{\Geom_{\sf flat}}
\newcommand{\Mf}{\ensuremath{{\sf Mf}}}
\newcommand{\Tan}{\ensuremath{\operatorname{\textnormal{Tan}}}}
\newcommand{\Cat}{\mbox{\textnormal{Cat}}}
\newcommand{\ob}{\ensuremath{\operatorname{\textnormal{\textsf{ob}}}}}
\newcommand{\sF}{\ensuremath{\mathcal{F}}}
\title{Affine geometric spaces in tangent categories}
\author{R. F. Blute, G. S. H. Cruttwell,  and R. B. B. Lucyshyn-Wright}
\address{Department of Mathematics and Statistics, University of Ottawa, Ottawa, Ontario CANADA \\ Department of Mathematics and Computer Science, 
Mount Allison University, Sackville, New Brunswick CANADA \\ Department of Mathematics and Computer Science, Brandon University, Brandon, Manitoba CANADA}
\thanks{The authors thank NSERC for its generous support.  The third author gratefully acknowledges an AARMS PDF held earlier in the development of this work.}
\keywords{Tangent categories, affine manifolds, connections}
\begin{document}
\maketitle

\begin{abstract} We continue the program of {\it structural differential geometry} \add{that} begins with the notion 
of a tangent category, an axiomatization of \add{structural aspects of }the tangent functor on the category of smooth manifolds.    In classical geometry, having an affine structure on a manifold is equivalent to 
having a flat torsion-free connection on its tangent bundle. This equivalence allows us to define a category of affine \add{objects} associated to a tangent category and we show that the resulting category is also a tangent category, as 
are several related categories.   As a consequence of some of these ideas we also give two new characterizations of flat torsion-free connections.  

We also consider 2-categorical structure associated to the category of tangent categories and demonstrate that 
assignment of the tangent category of affine objects to a tangent category induces a 2-comonad.

\end{abstract}


\section{Introduction}

This paper is part of a broader program of {\it structural differential geometry.}  The idea is to axiomatize \add{structural aspects of }the 
category of \add{smooth} manifolds and smooth maps by axiomatizing the tangent functor. A category
with an abstract tangent functor is called a {\it tangent category}. An axiomatization of the tangent functor 
was first given by Rosick\'y \cite{Ros}, and the concept was elaborated on
by Cockett and the second author in \cite{CC1} with additional ideas introduced 
in \cite{CC2,CC3,CC4,CL}. 

The axioms of a tangent category are sufficiently strong that one can develop highly nontrivial results,
but also general enough to capture a number of different settings where there is a sensible notion of 
smoothness.  For example, models of synthetic differential geometry fit into this framework, as do the convenient manifolds of Fr\"olicher, Kriegl and Michor \cite{FK,KM,BET}, and a notion of differentiation appearing in the calculus of functors \cite{abFuncCalcDirDeriv}.  

There is also a strong logical underpinning for tangent categories. {\it Differential linear logic} \cite{ER1,ER2}
and the associated categorical structures of {\it differential categories} \cite{BCS1} \add{are} an extension
of linear logic to include an inference rule capturing the operation of taking a directional derivative. To every
differential category, one can associate a coKleisli category. Such coKleisli categories are examples of 
{\it cartesian differential categories} \cite{BCS2}. Every cartesian differential category has a canonical tangent category
structure \cite[Section 4.2]{CC1}.   

Following this initial work, there has been further development showing the extent to which \add{further ideas of differential geometry can be developed within the abstract setting of tangent categories}. See \cite{CC2,CC3,CC4,CL}. This paper is 
another contribution to this program.   In particular, we wish to look at affine manifolds (manifolds with an atlas whose transition maps are affine) and how they can be defined in tangent categories.   Affine differential 
geometry is a relatively small subindustry of differential geometry, but affine manifolds have a great deal of
interesting structure as well as a variety of examples. See \cite{NS,Goldm1,Goldm2,Au,AuMar}.

The work of this paper is in part inspired by the thesis of Jubin \cite{Jub}. Jubin considers the tangent functor on the 
category of smooth manifolds and smooth maps. He demonstrates that this tangent functor has a monad structure.
Indeed he shows \add{that }it has precisely one such and further demonstrates that there are no comonad structures
on the tangent functor at all. However he does demonstrate that when one restricts to the subcategory 
of affine manifolds and affine maps, there are infinite families of monads and comonads. Furthermore
there are mixed distributive laws \add{(see, e.g., }\cite{MW}\add{)} between these structures. We intend to present 
an abstraction of Jubin's systems of monads and comonads in a sequel. 

While the notion of system of affine charts is not directly amenable to definition in a tangent category, we use a theorem of 
Auslander and Markus \cite{AuMar} showing that an affine manifold can be defined equivalently as a manifold equipped with a flat 
torsion-free connection on its tangent bundle (Theorem \ref{thm:aus-mar} below).  Thus we are led to the theory of connections in 
a tangent category as introduced in \cite{CC4}. See \cite{DP} for the classical theory of connections. Perhaps the
best evidence of the strength of the axioms for \add{tangent categories} is how well the theory of connections
works here.  Prompted by the Auslander-Markus characterization of affine manifolds, we define a {\it geometric space} 
to be an object equipped with a connection\add{ on its tangent bundle} and an {\it affine geometric space} to be a geometric space 
\add{whose} associated connection is flat and torsion-free. It is reasonable to call such objects geometric since the \add{given} 
connection \add{generalizes Riemannian structure and} allows one to define such geometric \add{features} as 
curvature and torsion. \add{In particular, such features can be defined for an 
object with connection in an arbitrary tangent category}. \add{Maps in the category of geometric spaces are those maps that 
commute with the given connections.}  There has not been much in the way of study of categories with such morphisms,
although we do mention \cite{Hicks}. 

We first show that the various geometric categories that we define remain tangent categories, with the structure lifting 
from the base category. Along the way, we also look at \add{certain} notions of morphism between tangent categories 
and derive technical lemmas about their lifting to the geometric categories. We also give an alternative 
characterization of flat torsion-free connections that seems to be new (Theorem \ref{propFTFConditions}): a flat torsion-free connection \add{$K$ }can be seen as a morphism in the category of geometric spaces from $T(TM)$\add{ to }$TM$\add{, where the tangent bundles $TM$ and $T(TM)$ are endowed with geometric structures canonically induced by $K$}. This result alone demonstrates the importance of considering categories whose arrows commute with connections. 

We also consider \add{certain} 2-categories of tangent categories. We show that \add{there are 2-functors that send each tangent category to its tangent category of geometric spaces or affine geometric spaces}. Of course, 
these results require a careful presentation of the 2-categorical structure of tangent categories. We show that the 
affine construction induces a 2-comonad on the 2-category of tangent categories. We then define an {\it affine 
tangent category} to be an Eilenberg-Moore coalgebra with respect to this comonad and give an alternate characterization
of these structures.

\begin{remark}
\emptybox
\begin{itemize}
\item Note that following previous work on tangent and differential categories \cite{BCS1, BCS2, CC1, CC2, CC3, CC4} we write our compositions in diagrammatic order\add{ unless otherwise indicated.  However, we write the application of a functor $F$ to a morphism $f$ as $F(f)$, as in the cited works, or as $Ff$; correspondingly, we write the composite of functors $F:\A \rightarrow \B$ and $G:\B \rightarrow \C$ in non-diagrammatic order as $GF$.}
\item In many of the longer calculations, we omit the subscripts on natural transformations to save space.  
\end{itemize}
\end{remark}

\section{Affine manifolds}\label{Aff}

We give an overview of the classical theory of affine manifolds and their associated connections. See 
\cite{Au,AuMar,NS}.\add{  An \textit{affine manifold} is a real manifold whose transition maps are affine (and hence necessarily smooth):}

\begin{definition}\label{def:aff_mf} \add{An $n$-dimensional \textbf{affine manifold} is a real manifold equipped with a specified atlas consisting of charts
$\psi_i:U_i\xrightarrow{\sim} V_i \subseteq \mathbb{R}^n$ such that all of the composites \linebreak $\psi_i\circ\psi_j^{-1}\colon V_{ji} \rightarrow V_{ij}$ are affine maps between the subsets $V_{ji} = \psi_j(U_i\cap U_j)$ and $V_{ij} = \psi_i(U_i\cap U_j)$ of $\mathbb{R}^n$.  Here a map $f:V \rightarrow W$ between subsets $V \subseteq \mathbb{R}^n$ and $W \subseteq \mathbb{R}^m$ is said to be \textnormal{affine} if it has constant Jacobian, or equivalently, if it is the restriction of a map $F:\mathbb{R}^n \rightarrow \mathbb{R}^m$ that is affine in the usual sense, i.e. a composite of a linear map followed by a translation.}
\end{definition}

\add{While it is not obvious how one might generalize this notion to the more abstract context of tangent categories, there is a theorem of Auslander and Markus that will enable us to obtain such a generalization.}  See \cite{AuMar}. See \cite{DP} for the classical theory of connections.

\begin{theorem}[Auslander-Markus]\label{thm:aus-mar}
A manifold is an affine manifold if and only if there is a flat torsion-free connection on its tangent bundle. \add{  Moreover, each affine manifold has a canonically associated flat torsion-free connection on its tangent bundle, and every smooth manifold with a flat torsion-free connection on its tangent bundle carries an associated affine structure.}
\end{theorem}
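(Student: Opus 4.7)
The plan is to prove the two directions of the equivalence separately, using the standard coordinate expression of a connection via Christoffel symbols on each chart. Throughout, I would work with the Koszul-style definition of a connection $\nabla$ on $TM$, for which flatness and torsion-freeness are expressed respectively by $R(X,Y)Z = \nabla_X\nabla_Y Z - \nabla_Y\nabla_X Z - \nabla_{[X,Y]}Z = 0$ and $T(X,Y) = \nabla_X Y - \nabla_Y X - [X,Y] = 0$.

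For the forward direction, suppose $M$ carries an affine atlas $(U_i,\psi_i)$. On $\mathbb{R}^n$ there is a canonical flat torsion-free connection $\nabla^0$ whose Christoffel symbols vanish identically (parallel transport is ordinary translation). I would transfer $\nabla^0$ to each $U_i$ via $\psi_i$, obtaining local connections $\nabla^i$ each having vanishing Christoffel symbols in the chart $\psi_i$. To glue them into a global connection it suffices to observe that the transformation law for Christoffel symbols under a change of coordinates $\varphi = \psi_i\circ\psi_j^{-1}$ has the schematic form
\[
\Gamma^{\prime\,k}_{\alpha\beta} \;=\; \frac{\partial x^i}{\partial y^\alpha}\frac{\partial x^j}{\partial y^\beta}\frac{\partial y^k}{\partial x^\ell}\,\Gamma^\ell_{ij} \;+\; \frac{\partial y^k}{\partial x^\ell}\frac{\partial^2 x^\ell}{\partial y^\alpha\partial y^\beta},
\]
and the second summand, which contains the obstruction to gluing, vanishes precisely because the transition map $\varphi$ is affine and so has zero Jacobian derivative. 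Thus the $\nabla^i$ agree on overlaps and define a global connection $\nabla$ whose symbols vanish in every affine chart, hence $\nabla$ is trivially both flat and torsion-free.

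For the converse, suppose $\nabla$ is a flat torsion-free connection on $TM$. Around any point $p\in M$ I would first use flatness to produce a local parallel frame $(E_1,\ldots,E_n)$, i.e.\ $\nabla_X E_a = 0$ for all $X$ and $a$. Such a frame exists on a neighbourhood of $p$ because $R=0$ means the $\nabla$-horizontal distribution on the frame bundle is involutive, so the Frobenius theorem produces a flat section. Torsion-freeness then forces $[E_a,E_b] = \nabla_{E_a}E_b - \nabla_{E_b}E_a = 0$, so the frame is holonomic; by the simultaneous straightening theorem it is induced by a coordinate chart $\psi\colon U\to V\subseteq \mathbb{R}^n$. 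In such a chart the Christoffel symbols of $\nabla$ all vanish. Applying this construction at every point yields an atlas in which $\nabla$ is coordinate-trivial, and the already-computed transformation law forces any transition map $\psi_i\circ\psi_j^{-1}$ to satisfy $\partial^2 x^\ell/\partial y^\alpha\partial y^\beta = 0$, i.e.\ to be affine. This produces the desired affine atlas, and by construction the connection canonically associated to this atlas (as in the forward direction) is $\nabla$ itself, establishing the ``canonically associated'' clause in the statement.

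The main obstacle is the backward direction, specifically the step that passes from flatness alone (which gives a local parallel frame) to an actual coordinate chart; this is where torsion-freeness is indispensable, via the identity $[E_a,E_b] = T(E_a,E_b)$ for a parallel frame, and where one must invoke a Frobenius-type integrability argument. The forward direction is essentially a coordinate computation, and the uniqueness/canonicity assertions at the end reduce to checking that the two constructions are mutually inverse, which is immediate once one notes that in both cases the distinguished charts are exactly those in which the Christoffel symbols of $\nabla$ vanish.
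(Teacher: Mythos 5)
The paper does not prove this theorem: it is imported verbatim from Auslander and Markus, with the reader referred to their article, so there is no internal argument to compare yours against. That said, your proposal is the standard and correct proof of this classical fact. The forward direction (transporting the trivial connection on $\mathbb{R}^n$ through an affine atlas and gluing via the Christoffel transformation law, whose inhomogeneous term $\frac{\partial y^k}{\partial x^\ell}\frac{\partial^2 x^\ell}{\partial y^\alpha\partial y^\beta}$ dies exactly when transitions are affine) and the backward direction (flatness $\Rightarrow$ local parallel frame via Frobenius on the frame bundle, torsion-freeness $\Rightarrow$ the frame commutes, simultaneous straightening $\Rightarrow$ a chart with vanishing Christoffel symbols, and invertibility of the Jacobian in the transformation law $\Rightarrow$ affine transitions) are both sound, and you correctly isolate where each hypothesis is used. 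Two minor points worth tightening if you wrote this out in full: first, the ``mutually inverse'' claim holds at the level of \emph{maximal} affine atlases (distinct but affinely compatible atlases induce the same connection), which is consistent with how the paper states the canonicity clauses; second, ``constant Jacobian'' should be read per connected component of a chart overlap, which is exactly how the paper's Definition of affine map is phrased, so no issue arises. Your proof is also consonant with how the paper later \emph{uses} the theorem: in Proposition \ref{prop:affineMaps} the authors rely precisely on the fact that the associated connection has identically vanishing Christoffel symbols in the designated charts, which is the invariant your construction produces.
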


Of special interest are the {\it complete affine manifolds}. These are affine manifolds that satisfy geodesic 
completeness, \cite{DP} p. 250. In the affine case, geodesic completeness turns out to be equivalent 
to being a quotient of an affine  space by a discrete group of affine transformations acting on the space. 
See \cite{Goldm2} for further discussion.

Affine manifolds are the objects of a category $\Aff$ in which a morphism is a \textbf{locally affine} map, i.e. a smooth map $f:M \rightarrow N$ such that for every pair of designated charts $U \cong V \subseteq \mathbb{R}^n$ and $U' \cong V' \subseteq \mathbb{R}^m$ for $M$ and $N$, respectively, the restriction of $f$ to $U \cap f^{-1}(U')$ has all its second partial derivatives equal to zero.

The category of affine manifolds, and certain structures that it carries, were studied by Jubin \cite{Jub}, using the following result:

\begin{theorem}
\add{The tangent functor on smooth manifolds lifts to an endofunctor on the category of affine manifolds $\Aff$.}
\end{theorem}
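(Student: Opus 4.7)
My plan is to construct the lifted functor directly in coordinates, using the affine atlas characterization from Definition \ref{def:aff_mf}. Given an affine manifold $M$ with specified atlas $\{\psi_i\colon U_i \xrightarrow{\sim} V_i \subseteq \mathbb{R}^n\}$, I would endow the standard smooth tangent bundle $TM$ with the atlas $\{T\psi_i\colon TU_i \xrightarrow{\sim} V_i \times \mathbb{R}^n \subseteq \mathbb{R}^{2n}\}$ obtained by applying the ordinary tangent functor to each chart. This atlas manifestly refines the canonical smooth structure on $TM$, so the only thing to check on objects is that the transition maps are affine.

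The key calculation is that if $\phi\colon V \to W$ is an affine map between open subsets of Euclidean spaces given by $\phi(x) = Ax + b$, then in the standard trivialization $T\phi(x,v) = (Ax+b,Av)$ has constant Jacobian $\begin{pmatrix} A & 0 \\ 0 & A \end{pmatrix}$ and is therefore affine. Applying this observation to each transition map $\psi_i \circ \psi_j^{-1}$ of the original atlas, which is affine by hypothesis, yields that $T\psi_i \circ (T\psi_j)^{-1} = T(\psi_i \circ \psi_j^{-1})$ is affine, so $\{T\psi_i\}$ endows $TM$ with an affine manifold structure.

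For morphisms, suppose $f\colon M \to N$ is locally affine. In each pair of designated charts, $f$ is represented by a smooth map $F\colon V \to W$ whose second partial derivatives vanish, i.e.\ with constant Jacobian $A = DF$. Then the induced representative of $Tf$ takes the form $TF(x,v) = (F(x),Av)$, whose Jacobian is the constant block matrix $\begin{pmatrix} A & 0 \\ D^2F(x)\cdot v & A \end{pmatrix} = \begin{pmatrix} A & 0 \\ 0 & A \end{pmatrix}$; hence $Tf$ is locally affine between the lifted atlases. Functoriality is then inherited verbatim from the ordinary tangent functor on smooth manifolds, since the lifted functor agrees with it on underlying smooth manifolds and smooth maps.

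The only real obstacle is bookkeeping: one must be a little careful to match the chart-based definition of \textbf{locally affine} given in the text with the coordinate computation, and to observe that the vanishing of $D^2 F$ is exactly what forces the off-diagonal block above to be zero, so that $Tf$ inherits constant Jacobian from $f$. No deeper argument is needed, and in particular the Auslander--Markus theorem is not required for this statement; it would be required only if one wished to formulate the lift intrinsically in terms of flat torsion-free connections rather than affine atlases.
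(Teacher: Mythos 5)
Your proof is correct. Note, however, that the paper does not prove this theorem by a chart computation at all: it cites Jubin for the direct argument and observes that the statement also follows from the much more general Theorem \ref{thm:aff-tcat}, which lifts the entire tangent structure of an arbitrary tangent category $(\C,T)$ to its category of affine geometric spaces; combined with Proposition \ref{prop:affineMaps} (the Auslander--Markus identification of $\Aff$ with $\Aff(\Mf,T)$), that abstract result specializes to the present one. Your route is the elementary one: you check that $T$ of an affine map of open subsets of Euclidean space is affine, with constant block Jacobian $\left(\begin{smallmatrix} A & 0 \\ 0 & A \end{smallmatrix}\right)$, so that $\{T\psi_i\}$ is an affine atlas on $TM$, and you correctly identify the vanishing of $D^2F$ as exactly what kills the off-diagonal block in the Jacobian of $TF$, so that $Tf$ is locally affine; functoriality is then inherited from the smooth tangent functor. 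This is self-contained and, as you note, needs neither connections nor Auslander--Markus --- it is essentially Jubin's proof. What the paper's route buys in exchange is generality and extra structure: the abstract argument is valid in any tangent category and yields not merely an endofunctor but a full tangent structure $(T_*,p_*,0_*,+_*,\ell_*,c_*)$ on $\Aff$, together with the explicit formula $K \mapsto T(c)cT(K)c$ for the induced connection on $TM$, none of which is visible from the chart computation.
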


Jubin gives a proof of this result (of course, it also follows from our more general Theorem \ref{thm:aff-tcat}).  Jubin also studies monad and comonad structures on $\Aff$, as we discuss in \S \ref{sec:jub}.

\section{Tangent categories and connections}\label{sec:tcat_cx}



We assume the reader is familiar with tangent categories (the references \cite{CC1,CC3,CC4} are all suitable).  
While the axioms for a tangent category at first may appear ad-hoc, recent work of Leung \cite{leungClassifyingTanStructures} and Garner \cite{garnerEmbedding} has shown how tangent categories are related to \add{Weil} algebras and how tangent categories are a type of enriched category.





\smallskip

\add{Before defining a notion of connection in tangent categories, it is helpful to have at hand the following generalization of the notion of vector bundle, given in \cite{CC3}:}

\begin{definition}\label{def:db}
A \textbf{differential bundle} in a tangent category consists of an additive bundle $(q: E \to M, \pq: E_2 \to E, \zq: M \to E)$ 
with a map $\lambda: E \to T(E)$, called the {\bf lift}, such that
\begin{itemize}
        \item finite fibre powers of $q$ exist and are preserved by each $T^n$;
        \item $(\lambda,0_M)$ is an additive bundle morphism from $(E, q,\pq,\zq)$ to $(T(E), T(q), T(\pq), T(\zq))$;
	\item $(\lambda,\zq)$ is an additive bundle morphism from $(E, q,\pq,\zq)$ to $(T(E), p_E,+_E,0_E)$;
	\item the {\bf universality of the lift} requires that the following \add{be} a pullback:
	       $$\xymatrix{E_2 \ar[d]_{\p_0q=\pi_1q} \ar[rrr]^{\mu := \<\p_0\lambda, \p_10\>T(\pq)} & & & 
	       T(E) \ar[d]^{T(q)} \\ M \ar[rrr]_{0} && & T(M)}$$
              where $E_2$ is the pullback of $q$ along itself;
	\item the equation $\lambda \ell_E = \lambda T(\lambda)$ holds.
\end{itemize}
We shall write $\sf q$ to denote the entire bundle structure $(q,\pq,\zq,\lambda)$.

Now let $\sf q$ and $\sf q'$ be differential bundles.  A \textbf{bundle morphism} between these bundles 
simply consists of a pair of maps $f_1: E \to E'$, $f_0: M \to M'$ such that $f_1q' = qf_0$ 
(first diagram below).  A bundle morphism is \textbf{linear} in case, in addition, it preserves the lift, that 
is $f_1\lambda' = \lambda T(f_1)$ (the 
second diagram below).
$$\xymatrix{E \ar[d]_q \ar[rr]^{f_1} && E' \ar[d]^{q'} \\ M \ar[rr]_{f_0} && M'} ~~~~~~ 
 \xymatrix{E \ar[d]_\lambda \ar[rr]^{f_1} & & E' \ar[d]^{\lambda'} \\ T(E) \ar[rr]_{T(f_1)} && T(E')}$$
\end{definition}

\noindent Notably, every linear bundle morphism is automatically additive \cite[Proposition 2.16]{CC3}.

\add{In the present section, we shall tacitly assume that given differential bundles $\sf q$ satisfy the following additional condition, which is a prerequisite for considering connections on such bundles} \cite[Def. 2.2]{CC4}, \cite[3.1]{Lu:Cx}:
\begin{equation}\label{eq:assn-on-dbs}\begin{minipage}{5.7in}\textit{\add{For all natural numbers $n$ and $m$, the $n$-th fibre power $E_n \rightarrow M$ of $q$ has a pullback along the projection $T_m M \rightarrow M$, and this pullback is preserved by each $T^k$.}}\end{minipage}\end{equation}
Of course, the crucial example of a differential bundle is the tangent bundle
$${\sf p}_M = (p_M,+_M,0_M,\ell_M)$$
of an object $M$ \cite[Example 2.4]{CC3}, and we recommend keeping it in mind in the definitions below.  The tangent bundle always satisfies the preceding additional assumption \cite[Example 2.3]{CC4}.

\smallskip

Given any differential bundle ${\sf q} = (q:E \rightarrow M,\pq,\zq,\lambda)$ we obtain an associated differential bundle
$$T({\sf q}) = (T(q):TE \rightarrow TM, T(\pq), T(\zq), T(\lambda)c_E),$$
defined in \cite[\S 2.3]{CC3}.  Hence $TE$ underlies two differential bundles, namely $T({\sf q})$ and ${\sf p}_E$, whose underlying additive bundles appear in Definition \ref{def:db}.

\smallskip

We now recall the notion of connection as \add{it is }defined \add{in \cite{CC4} }with respect to a tangent category.\add{  In this formulation, a connection consists of two parts, a \textit{vertical connection} and a \textit{horizontal connection}, that are suitably compatible.  Together, they split the tangent bundle of a given bundle into vertical and horizontal components.}  A result of Patterson \cite[Theorem 1]{Patt}  shows that vertical connections in the category of smooth manifolds correspond to one of the standard formulations of the notion of connection:  a covariant (or Koszul) derivative.  On the other hand, horizontal connections in smooth manifolds correspond to what are known as linear Ehresmann connections \cite[Definition 7.2.1]{finsler}\footnote{Some authors such as Lang simply call these connections \cite[pg. 104]{Lang}.}.  For smooth manifolds the existence of a covariant derivative/vertical connection is equivalent to the existence of a linear Ehresmann/horizontal connection \cite[Proposition 7.5.11]{finsler}  but \add{this} is no longer the case in a general tangent category. \add{This led the authors of \cite{CC4} to employ} both\add{ notions at once,} as seen below.

A covariant derivative (or Koszul connection) is an operation on global sections of certain bundles.  Although this notion is one of the most standard formulations of connections, it is inadequate for describing connections in a general tangent category as these are structures internal to the given category and so cannot in general be characterized in terms of global sections.  However one can find an appropriate abstract definition based on the work of
Patterson \cite{Patt}.

\begin{definition}\label{defnVertConnection}
Let $\sf q$ be a differential bundle on $E$ over $M$.  \add{A \textbf{vertical connection} on $\sf q$ is a map $K:T(E) \rightarrow E$ that is a retraction of $\lambda:E \rightarrow T(E)$ and satisfies the following conditions:}
\begin{enumerate}[{\bf [C.1]}]
	\item $(K,p):T({\sf q}) \to {\sf q}$ is a linear bundle morphism;
	\item $(K,q): {\sf p}_E \to {\sf q}$ is a linear bundle morphism.  
\end{enumerate}
\end{definition}

Curvature in differential geometry is thought of as a measure of the extent to which a geometric\add{ space} deviates from being
flat $n$-space \cite{DP}. It is typically defined for Riemannian manifolds or\add{,} more generally\add{,} arbitrary 
manifolds equipped with a connection.  Of particular interest are those connections that have no curvature, that is, they are \emph{flat}.

\begin{definition}\label{defn:flat}
In a tangent category with a vertical connection $K$ on a differential bundle $\sf q$, say that the vertical connection is \textbf{flat} if $cT(K)K = T(K)K$.  
\end{definition}
In the tangent category of smooth manifolds, this is equivalent to the usual definition, by a result of Patterson \cite[Theorem 2]{Patt}.

The closely related notion of torsion \add{captures twisting effects that tangent vectors incur when} subject to parallel transport.  Again, it is of particular interest to see when connections have no torsion; that is, when they are \emph{torsion-free}.  

\begin{definition}\label{defn:torsionFree}
In a tangent category with a vertical connection $K$ on the tangent bundle of an object $M$ (so that $K: T^2(M) \to T(M)$), say that the vertical connection is \textbf{torsion-free} if $cK = K$.  
\end{definition}
Again, the equivalence of this definition with the standard one follows from a result of Patterson \cite[Theorem 3]{Patt}.

Linear Ehresmann connections also generalize to the setting of an arbitrary tangent category, through the notion of {\it horizontal connection} \cite{CC4}:

\begin{definition}\label{defnHorizConnection}
Let $\sf q$ be a differential bundle.  \add{A \textbf{horizontal connection} on $\sf q$ is a map $H: T(M) \times_M E \to T(E)$ 
that is a section of $U = \<T(q),p\>$ and satisfies the following conditions:}
\begin{itemize}
	\item $(H,1_E)$ is a linear bundle morphism from $q^*({\sf p_M})$ to $\sf p_E$;
	\item $(H,1_{T(M)})$ is a linear bundle morphism from $p^*({\sf q})$ to $T({\sf q})$.
\end{itemize}
\end{definition}
\noindent\add{Here, we write $q^*({\sf p_M})$ (resp. $p^*({\sf q})$) to denote the pullback of ${\sf p_M}$ along $q$ (resp. of ${\sf q}$ along $p_M$) }\cite[Lemma 2.7]{CC3}\add{, which exists as a consequence of our assumption \eqref{eq:assn-on-dbs}; see }\cite[2.4.7, 2.4.8, \S 3]{Lu:Cx}\add{ for an explicit account of this existence.}

As already noted, \add{for smooth manifolds} the notions of horizontal and vertical connection are equivalent.  \add{In general tangent categories} they are not, and this led Cockett and Cruttwell to define a connection to be a pair consisting of one of each satisfying compatibility, as follows.

\begin{definition}[{\cite[Def. 5.1]{CC4}}]\label{defnConnection}
A \textbf{connection}, $(K,H)$, on a differential bundle $\sf q$ consists of a vertical connection 
$K$ on $\sf q$ and a horizontal connection $H$ on $\sf q$ such that
\begin{itemize}
	\item $H K = \pi_1q\zq$;
	\item $\<K,p\>\mu + UH = 1_{T(E)}$
where \add{$\mu$ is as defined in \ref{def:db} and }the addition operation $+$ is induced by $+_E:T_2E \rightarrow TE$.
\end{itemize}
These two conditions are called the \textnormal{compatibility conditions} between $H$ and $K$.
 
\end{definition}

\begin{proposition}[{\cite[Prop. 3.5]{CC4}}]\label{diffObjectConnection}
\add{In a Cartesian tangent category, }\add{a}ny differential object $A$ (i.e. a differential bundle over $1$) has a canonical connection 
$(K,H)$ where $H = \pi_10$ \add{and $K$ is the \textnormal{principal projection} $\hat{p}:TA \rightarrow A$ associated to $A$ (}\cite[\S 3]{CC3}, \cite[Example 2.3]{CC4}\add{).}
\end{proposition}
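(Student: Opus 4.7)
The plan is to verify each clause of Definition \ref{defnConnection} in turn, exploiting the fact that for a differential object $A$ in a Cartesian tangent category, the tangent bundle $TA$ splits canonically as $A \times A$, with $p_A$ and $\hat{p}$ realized as the two projections, $0_A$ realized as $\langle 1_A, 0_q q \rangle$, and $\lambda$ realized as $\langle 0_q q, 1_A \rangle$ (here $q: A \to 1$). Under this decomposition every structure map in sight—$\ell_A$, $c_A$, $+_A$, and the maps $\mu$ and $U$ appearing in Definitions \ref{def:db} and \ref{defnHorizConnection}—admits a concrete description as an induced map between products, which reduces each axiom to a finite equation between such induced maps.

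First I would check that $K = \hat{p}$ is a vertical connection. The retraction equation $\lambda K = 1_A$ is immediate from the product splitting. Axiom \textbf{[C.1]} requires $(\hat{p}, p_A): T(\mathsf{q}) \to \mathsf{q}$ to be a linear bundle morphism; the additive-bundle part is automatic because $\hat{p}$ is one of the two product projections on $TA$, and the preservation of the lift follows from the coherence $\lambda \ell = \lambda T\lambda$ applied at $A$, which forces $T\lambda \cdot c_A \cdot T\hat{p} = \lambda \cdot \hat{p}$ after projecting. Axiom \textbf{[C.2]} reduces similarly to the fact that $\hat{p}$ is a retraction of $\lambda$ compatible with the additive structure on ${\sf p}_A$.

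Next I would check that $H = \pi_1 \cdot 0_A$ is a horizontal connection. Because $q: A \to 1$ is terminal and $T$ preserves the terminal object in a Cartesian tangent category, the pullback $TM \times_M A$ reduces to $A$, and $\pi_1$ becomes an identity, so $H$ essentially amounts to $0_A$. The section condition for $U = \langle Tq, p \rangle$ collapses to $0_A p_A = 1_A$, which is one of the additive-bundle axioms. The two linearity conditions then say that $0_A$ intertwines the pertinent additions and lifts; these are precisely the statements that $0_A$ is a section of the additive bundle ${\sf p}_A$ and is natural with respect to the lift, both of which follow from the tangent-category axioms and the product decomposition.

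Finally I would verify the two compatibility equations. For $HK = \pi_1 q \zq$, the composite $0_A \hat{p}$ maps $a \mapsto (a, 0_q)$ and then projects to $0_q$, which is exactly $\pi_1 q \zq$. The main obstacle is the remaining equation $\langle K, p \rangle \mu + UH = 1_{TE}$: here one must expand $\mu = \langle \pi_0 \lambda, \pi_1 0 \rangle T(+_q)$ in the product decomposition of $T^2 A$ and compute $\langle \hat{p}, p_A \rangle \mu$ explicitly, recognising it as the ``vertical part'' $\langle 0_q q, \hat{p} \rangle$ of a tangent vector; the summand $UH$ then supplies the ``horizontal part'' $\langle 1_A, 0_q q \rangle \cdot p_A$, and the two sum to the identity $\langle p_A, \hat{p} \rangle = 1_{TA}$. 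Carrying this out cleanly is the one place where care with the $\mu$-axiom, the universality of the lift, and the additive structure on $T^2 A$ is genuinely required; everything else in the proof is a direct appeal to the product decomposition of $TA$ given by the differential object structure.
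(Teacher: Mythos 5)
The paper itself offers no proof of this proposition (it is imported from \cite[Prop.\ 3.5]{CC4}), so your outline can only be judged on its own correctness. Your overall plan --- split $TA \cong A \times A$ via $\langle p_A, \hat{p}\rangle$ and verify each clause of Definitions \ref{defnVertConnection}, \ref{defnHorizConnection} and \ref{defnConnection} by direct computation --- is the right one, and the checks of the vertical-connection axioms, the horizontal-connection axioms and the first compatibility equation are essentially sound (though several of your displayed composites, e.g. ``$T\lambda \cdot c_A \cdot T\hat{p} = \lambda\cdot\hat{p}$'', do not typecheck in either composition order and need to be rewritten).

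The genuine problem is in the step you yourself flag as the crux. For the bundle $q : A \to 1$ the vertical subbundle of $TA$ is \emph{all} of $TA$ and the horizontal part is trivial: computing $\mu(x,y) = T(\pq)(\lambda x, 0_y)$ in the splitting gives the tangent vector with base point $y$ and principal part $x$, so $\langle K,p\rangle\mu(v) = \mu(\hat{p}v, pv) = v$, i.e. $\langle K,p\rangle\mu = 1_{TA}$, while $UH = p_A 0_A$ is the additive zero of ${\sf p}_A$; the second compatibility equation then reads $v + 0_{p(v)} = v$ and holds trivially. Your claimed summands --- a ``vertical part'' $(a,w)\mapsto(0,w)$ based at $\zq$ and a ``horizontal part'' $(a,w)\mapsto(a,0)$ based at $a$ --- do not lie over the same point of $A$, so their sum under $+_A : T_2A \to TA$ (which is the addition Definition \ref{defnConnection} prescribes) is not even defined. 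That decomposition is the one appropriate to the canonical connection $K = \langle T(\hat{p})\hat{p}, pp\rangle$ on the \emph{tangent bundle} ${\sf p}_A$ (the example following Definition \ref{def:geo}), not to the differential bundle $A \to 1$ that this proposition concerns. The conclusion you want is still true, but this computation must be redone.
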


\add{The third author proved that} connections \add{in a tangent category }are equivalently described as vertical connections satisfying a certain `exactness' condition, as follows\add{:}

\begin{theorem}[{\cite[8.2(3)]{Lu:Cx}}]\label{thm:evcx}
Let ${\sf q} = (q: E \to M, \pq, \zq,\lambda)$ be a differential bundle.  Then a connection on ${\sf q}$ is equivalently given by a vertical connection $K:TE \rightarrow E$ such that the following is a fibre product diagram in $\C$:
$$
\xymatrix{
            & TE \ar[dl]_{T(q)} \ar[d]|{p_E} \ar[dr]^K & \\
TM \ar[dr]_{\add{p_M}} & E \ar[d]|{\add{q}}                          & E \ar[dl]^q\\
   & M                                                 &
}
$$
\end{theorem}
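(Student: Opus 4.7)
The plan is to prove the equivalence in two steps. For the forward direction, suppose $(K,H)$ is a connection. I would define $\phi = \langle T(q), p_E, K\rangle : TE \to TM \times_M E \times_M E$; this is a well-defined cone since $p_M \circ T(q) = q \circ p_E = q \circ K$, the first equality by naturality of $p$ and the second by axiom [C.1] of Definition \ref{defnVertConnection}. As an inverse I would take $\psi = H \circ \langle \pi_0, \pi_1 \rangle + \mu \circ \langle \pi_2, \pi_1 \rangle$, where $\mu$ is the universal map from Definition \ref{def:db} and the addition is in the bundle ${\sf p}_E$; this is valid since both summands have $p_E$-value equal to $\pi_1$, using that $H$ is a section of $U$ and that $p_E \circ \mu = \pi_1$ by a direct computation. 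I would verify $\phi \circ \psi = \mathrm{id}$ component by component, using the pullback square defining $\mu$ to compute $T(q) \circ \mu$, using [C.1] together with $K \circ \lambda = 1_E$ to compute $K \circ \mu = \pi_0$, and using the first compatibility condition $K \circ H = \zq \circ q \circ \pi_1$. The other composite $\psi \circ \phi = 1_{TE}$ would follow immediately from the second compatibility condition $\langle K,p_E\rangle\mu + UH = 1_{TE}$ after unfolding definitions.

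For the reverse direction, suppose the diagram is a fibre product. Using the universal property, I would define $H : TM \times_M E \to TE$ as the unique morphism with $T(q) \circ H = \pi_0$, $p_E \circ H = \pi_1$, and $K \circ H = \zq \circ q \circ \pi_1$; this specifies a valid cone because $p_M \circ \pi_0 = q \circ \pi_1 = q \circ \zq \circ q \circ \pi_1$. By construction $H$ is a section of $U$, and the first compatibility condition holds. The second compatibility would follow by post-composing both sides of the desired equation with each of $T(q), p_E, K$ and appealing to uniqueness in the fibre product; the three component identities use the same computations as in the forward direction.

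The main obstacle will be verifying that this $H$ satisfies the two linearity conditions of Definition \ref{defnHorizConnection}, namely that it is a linear bundle morphism from $q^*({\sf p}_M)$ to ${\sf p}_E$ and from $p^*({\sf q})$ to $T({\sf q})$. For each equation between morphisms into $TE$ expressing compatibility with the projection, addition, zero, or lift in the relevant bundles, I would again apply the universal property of the fibre product to reduce the check to three separate component equations obtained by post-composing with $T(q), p_E$, and $K$. Each of these ought to follow from naturality of the tangent bundle structure maps, the [C.1] and [C.2] properties of $K$, and the defining properties of $\mu$ recorded in Definition \ref{def:db}; I expect this bookkeeping, while routine, to be the most delicate part of the argument.
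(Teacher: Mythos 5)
Note first that the paper does not prove this statement itself: it is imported verbatim from \cite[8.2(3)]{Lu:Cx}, so there is no internal proof to compare against, and I am judging your argument on its own terms. That said, your outline is essentially the right argument, and the forward direction is complete: the candidate inverse $\psi = \langle\pi_0,\pi_1\rangle H + \langle\pi_2,\pi_1\rangle\mu$ is exactly what is needed, the composite $TE \to TM\times_M E\times_M E \to TE$ equals $UH + \langle K,p\rangle\mu = 1_{TE}$ literally by the second compatibility condition, and the three component computations for the other composite go through as you indicate ($\mu T(q)=\pi_1q0_M$ from the universality square, $\mu p_E=\pi_1$, $\mu K=\pi_0$). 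One small correction there: $\mu K = \langle \pi_0\lambda K,\ \pi_1 0_E K\rangle\pq = \langle\pi_0,\ \pi_1 q\zq\rangle\pq = \pi_0$ uses not only {\bf [C.1]} and $\lambda K = 1_E$ but also {\bf [C.2]}, since the identity $0_EK = q\zq$ is zero-preservation for the additive morphism $(K,q):{\sf p}_E\to{\sf q}$. You should also record that $H$ is \emph{uniquely} determined by $K$ (any horizontal connection compatible with $K$ satisfies $HT(q)=\pi_0$, $Hp_E=\pi_1$, $HK=\pi_1q\zq$, so joint monicity of the three projections pins it down); without this the two implications do not quite assemble into the claimed equivalence.

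The one genuine gap is in your final paragraph. The two lift-preservation equations for $H$ --- that $(H,1_E):q^*({\sf p}_M)\to{\sf p}_E$ and $(H,1_{T(M)}):p^*({\sf q})\to T({\sf q})$ preserve the lifts --- are equations between morphisms into $T(TE)$, not into $TE$, so post-composing with $T(q)$, $p_E$, $K$ is not the applicable reduction and joint monicity of those three maps proves nothing about them. What you need is that $T$ (and each $T^k$) carries the fibre product diagram to another fibre product diagram, so that $T(T(q))$, $T(p_E)$, $T(K)$ are jointly monic and the check can be done one level up. This is available --- by the standing assumption \eqref{eq:assn-on-dbs} the pullback of $E_2\to M$ along $TM\to M$ exists and is preserved by every $T^k$, and your diagram exhibits $TE$ as that pullback, so its $T$-image is again a fibre product --- but it must be invoked explicitly; as written, the reduction you describe would fail for precisely the conditions you identify as the delicate part.
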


\begin{corollary}[{\cite[8.4(3)]{Lu:Cx}}]\label{thm:acx}
Let $M$ be an object of a tangent category $(\C,T)$.  Then a connection on the tangent bundle of $M$ is equivalently given by a vertical connection $K:T^2M \rightarrow TM$ \add{on ${\sf p}_M$ }such that the following is a fibre product diagram in $\C$:
\begin{equation}\label{eq:acx-fp}
\xymatrix{
                 & T^2M \ar[dl]_{T(p_M)} \ar[d]|{p_{TM}} \ar[dr]^K & \\
TM \ar[dr]_{p_M} & TM \ar[d]|{p_M}                                 & TM \ar[dl]^{p_M}\\
                 & M                                              &
}
\end{equation}
\end{corollary}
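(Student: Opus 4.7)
The plan is to obtain this corollary as a direct specialization of Theorem \ref{thm:evcx} to the tangent bundle. The candidate differential bundle is ${\sf q} = {\sf p}_M = (p_M, +_M, 0_M, \ell_M)$, which as recalled in the excerpt just after assumption \eqref{eq:assn-on-dbs} is a differential bundle satisfying that standing assumption. Consequently, Theorem \ref{thm:evcx} applies with ${\sf q} = {\sf p}_M$, and the task reduces to unwinding that theorem's statement in this specific case.

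Under the substitution ${\sf q} \mapsto {\sf p}_M$ the generic data rename as follows: $E = TM$, $q = p_M$, and $\lambda = \ell_M$, hence $TE = T^2M$, $T(q) = T(p_M)$, and $p_E = p_{TM}$. Inserting these identifications into the fibre product diagram that appears in Theorem \ref{thm:evcx} yields precisely the diagram \eqref{eq:acx-fp}: the apex $TE$ becomes $T^2M$; the three downward legs $T(q)$, $p_E$, and $K$ from the apex become $T(p_M)$, $p_{TM}$, and $K$; the middle row (which was $TM$, $E$, $E$ in general) becomes three copies of $TM$; and all three legs into $M$ are $p_M$. Thus Theorem \ref{thm:evcx}, specialized to ${\sf p}_M$, asserts exactly that a connection on ${\sf p}_M$ is equivalently given by a vertical connection $K : T^2M \rightarrow TM$ on ${\sf p}_M$ such that \eqref{eq:acx-fp} is a fibre product, which is the stated corollary.

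The only work involved is this notational bookkeeping, so there is no substantive obstacle. The mildly confusing point to watch is that the middle and right copies of $E$ in the general diagram, which play distinct roles in the fibre product, are both rendered as $TM$ after substitution; but this identification of the underlying object does not affect the universal property being asserted, so it is harmless.
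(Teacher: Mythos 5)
Your proposal is correct and is exactly the intended derivation: the paper presents \ref{thm:acx} as an immediate specialization of Theorem \ref{thm:evcx} to the tangent bundle ${\sf p}_M = (p_M, +_M, 0_M, \ell_M)$ (citing \cite[8.4(3)]{Lu:Cx} rather than writing out the substitution). The notational bookkeeping you describe, including the observation that the tangent bundle satisfies the standing assumption \eqref{eq:assn-on-dbs}, is all that is needed.
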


Given a morphism $K$ as \ref{thm:acx}, the associated horizontal connection $H$ is characterized by the following:

\begin{theorem}[{\cite[7.8]{Lu:Cx}}]\label{thm:hcx-for-evcx}
Given a vertical connection $K:T^2M \rightarrow TM$ on ${\sf p}_M$ such that \eqref{eq:acx-fp} is a fibre product diagram, there is a unique horizontal connection $H$ such that $(K,H)$ is a connection on the tangent bundle of $M$ in the sense of \ref{defnConnection}.  Further, $H$ is the unique morphism $H:TM \times_M TM \rightarrow T^2M$ such that
$$HT(p_M) = \pi_0,\;\;\;\;Hp_{TM} = \pi_1,\;\;\;\;HK = p_20_M$$
where $p_2:TM \times_M TM \rightarrow M$ is the projection.
\end{theorem}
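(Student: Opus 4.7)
The plan is to derive both parts of the statement as essentially immediate consequences of Corollary~\ref{thm:acx}.  First, existence and uniqueness of a horizontal connection $H$ completing $(K,H)$ to a connection on the tangent bundle of $M$ is exactly the content of Corollary~\ref{thm:acx}: the correspondence between connections $(K,H)$ and vertical connections $K$ satisfying the fibre product condition \eqref{eq:acx-fp} is bijective, so a chosen such $K$ determines a unique $H$.  For the second part, I would exploit the fact that, by \eqref{eq:acx-fp}, $T^2M$ is the apex of a fibre product of the three maps $T(p_M),\,p_{TM},\,K\colon TM \rightarrow TM$ over the common base $M$.  Hence any morphism $f\colon X \rightarrow T^2M$ is uniquely determined by the triple of composites $\bigl(fT(p_M),\,f p_{TM},\,fK\bigr)$, subject only to the compatibility that these three composites agree upon postcomposition with $p_M$.

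I would then compute the three composites in the case $f = H\colon TM \times_M TM \rightarrow T^2M$.  The first two are immediate from the fact that $H$ is a horizontal connection, hence a section of $U = \<T(p_M),p_{TM}\>$: applying $T(p_M)$ and $p_{TM}$ to $H$ yields $\pi_0$ and $\pi_1$, respectively.  For the third composite, I would use the first compatibility condition in Definition~\ref{defnConnection}, namely $HK = \pi_1 q \zq$.  Specializing to the tangent bundle ${\sf p}_M$, we have $q = p_M$ and $\zq = 0_M$, and the projection $\pi_1$ is the second projection $TM \times_M TM \rightarrow TM$, so $\pi_1 p_M = p_2$ and hence $HK = \pi_1 p_M 0_M = p_2 0_M$.

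To complete the characterization, I would verify the compatibility condition needed to invoke the universal property of the fibre product: all three of $\pi_0,\pi_1,p_2 0_M$ compose with $p_M$ to give $p_2\colon TM \times_M TM \rightarrow M$ (using that $0_M$ is a section of $p_M$).  The universal property then yields that there is a unique $H$ with the three stated equations, which must therefore be the horizontal connection produced in the first part.

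The argument is essentially formal once Corollary~\ref{thm:acx} is in hand; there is no serious obstacle.  The only point requiring care is the bookkeeping to identify $\pi_1 q \zq$ from Definition~\ref{defnConnection}, written in the general differential bundle notation, with the explicit expression $p_2 0_M$ in the tangent bundle case.
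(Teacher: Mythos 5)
A preliminary remark: the paper does not prove this theorem at all --- it is imported verbatim from [7.8] of \cite{Lu:Cx}, just as Corollary \ref{thm:acx} is imported from [8.4(3)] of the same source --- so there is no in-paper proof to compare against. This matters for your first step. In \cite{Lu:Cx} the equivalence of 8.4(3) is \emph{deduced from} 7.8, not the other way around, so reading Corollary \ref{thm:acx} as a bijective correspondence and extracting ``each such $K$ determines a unique $H$'' is circular in substance: that assertion \emph{is} the first half of the theorem, merely restated. (Within this paper's presentation the move is formally admissible, since both results are taken as given, but then the first half of your argument proves nothing.) A genuine proof of existence has to construct $H$ --- the natural candidate being the unique map $TM \times_M TM \rightarrow T^2M$ whose composites with the three legs $T(p_M)$, $p_{TM}$, $K$ of the fibre product \eqref{eq:acx-fp} are $\pi_0$, $\pi_1$, $p_20_M$ --- and then verify that this map satisfies the two linearity conditions of Definition \ref{defnHorizConnection} and the second compatibility condition $\<K,p\>\mu + UH = 1_{T^2M}$ of Definition \ref{defnConnection}. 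That verification is the substantive content of [Lu:Cx, 7.8] and is absent from your proposal.

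The second half of your argument is correct and is exactly the right mechanism: any $H$ making $(K,H)$ a connection must satisfy $HT(p_M) = \pi_0$ and $Hp_{TM} = \pi_1$ because $H$ is a section of $U = \<T(p_M),p_{TM}\>$, and $HK = \pi_1 p_M 0_M = p_20_M$ by specializing the compatibility condition $HK = \pi_1 q \zq$ to $q = p_M$, $\zq = 0_M$; the compatibility check $\pi_0 p_M = \pi_1 p_M = p_20_M p_M = p_2$ is right; and the universal property of the triple fibre product then pins $H$ down. Note that this portion of your argument already yields \emph{uniqueness} of $H$ in the first assertion with no appeal to Corollary \ref{thm:acx} whatsoever. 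What is genuinely missing is existence.
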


In the present paper, we shall represent connections as morphisms $K$ as in \ref{thm:evcx} and \ref{thm:acx}, but we shall also make important use of the associated horizontal connection $H$.  Moreover, throughout the rest of the paper, we shall primarily be concerned with connections on the tangent bundle of an object $M$.  Thus, for brevity, rather than speak of a ``connection on the tangent bundle of $M$'', we shall simply say ``connection on $M$''.  (Connections on tangent bundles are typically referred to as \emph{affine connections}, but, in this paper, this would cause an overload of the term ``affine''.)

\begin{definition}\label{def:acx-v}
In view of Corollary \ref{thm:acx} and the discussion above, we will call a morphism $K:T^2M \rightarrow TM$ a \textbf{connection on $M$} if $K$ is a vertical connection on the tangent bundle of $M$ and makes \eqref{eq:acx-fp} a fibre product diagram.
\end{definition}

We now record some properties of this notion of connection.

\begin{proposition}\label{prop:propK}
If $K$ is a connection on $M$, then 
\begin{enumerate}[(a)] 
\item ($K$ is a retract of $\ell$) $\ell_M K= 1_{TM}$;
\item ($K$ is a bundle morphism) $K p_M = p_{TM} p_M= T(p_M) p_M$;
\item (linearity of $K$) $K \ell_M = \ell_{TM} T(K)$ and $K \ell_M = T(\ell_M)c_{TM} T(K)$;
\item (additivity of $K$) $0_{TM} K = p_M0_M$, $+_{TM}K = \<\pi_0K, \pi_1 K\>+_M$, and $T(0_M)K = p_M0_M$, $T(+)K = \<T(\pi_0)K, T(\pi_1)K\>+_M$.  
\end{enumerate}
\end{proposition}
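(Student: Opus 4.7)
The plan is to recognize that each of (a)--(d) is a direct unpacking of the hypotheses on $K$.  By Definition \ref{def:acx-v}, $K$ is a vertical connection on the tangent bundle $\mathbf{p}_M = (p_M, +_M, 0_M, \ell_M)$, which by Definition \ref{defnVertConnection} means (i) $K$ is a retraction of the lift $\ell_M$ and (ii) $(K,p_M)$ is simultaneously a linear bundle morphism $T(\mathbf{p}_M) \to \mathbf{p}_M$ (condition \textbf{[C.1]}) and $\mathbf{p}_{TM} \to \mathbf{p}_M$ (condition \textbf{[C.2]}).  The idea is just to extract from each of these conditions the appropriate equation by substituting the explicit data of the two differential bundles involved.

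Statement (a) is immediate from (i).  For (b), the underlying equality $f_1 q' = q f_0$ in the definition of bundle morphism, applied with $f_1 = K$, $f_0 = p_M$ and $q' = p_M$, gives $K p_M = q f_0$ where $q$ is the projection of the source bundle: this yields $K p_M = T(p_M) p_M$ from \textbf{[C.1]} (since $T(\mathbf{p}_M)$ has projection $T(p_M)$) and $K p_M = p_{TM} p_M$ from \textbf{[C.2]} (since $\mathbf{p}_{TM}$ has projection $p_{TM}$).  For (c), linearity of $(K,p_M)$ is the equation $f_1 \lambda' = \lambda T(f_1)$ where $\lambda,\lambda'$ are the lifts of source and target.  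The target bundle $\mathbf{p}_M$ has lift $\ell_M$, so the right-hand side is always $K\ell_M$.  The source $T(\mathbf{p}_M)$ has lift $T(\ell_M) c_{TM}$ (per the construction $T({\sf q})$ recalled just before Definition \ref{defnVertConnection}), yielding $T(\ell_M) c_{TM} T(K) = K \ell_M$ from \textbf{[C.1]}, and the source $\mathbf{p}_{TM}$ has lift $\ell_{TM}$, yielding $\ell_{TM} T(K) = K\ell_M$ from \textbf{[C.2]}.

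For (d), we invoke the fact recorded right after Definition \ref{def:db} that every linear bundle morphism is automatically additive.  Additivity of $(K,p_M)$ means that the zero sections and fibrewise additions intertwine: $0_{\text{src}} K = p_M 0_M$ and $+_{\text{src}} K = \langle \pi_0 K, \pi_1 K\rangle +_M$, where $0_{\text{src}}$ and $+_{\text{src}}$ are the zero and addition of the source bundle.  For $\mathbf{p}_{TM}$ these are $0_{TM}$ and $+_{TM}$, giving the first pair of equations; for $T(\mathbf{p}_M)$ these are $T(0_M)$ and $T(+_M)$, giving the second pair.

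I do not expect any real obstacle here: this proposition is a bookkeeping result whose content is just the translation of \textbf{[C.1]} and \textbf{[C.2]} into concrete equations by substituting the data of the two differential bundles $T(\mathbf{p}_M)$ and $\mathbf{p}_{TM}$.  The only thing to be careful about is correctly identifying the lift of $T(\mathbf{p}_M)$ as $T(\ell_M) c_{TM}$ rather than $T(\ell_M)$, which accounts for the appearance of the canonical flip in the second equation of (c).
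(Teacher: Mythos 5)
Your proposal is correct and follows essentially the same route as the paper, which likewise derives (a)--(c) by directly unpacking the definition of a vertical connection on the tangent bundle (citing Lemma 3.3 of \cite{CC4}) and obtains (d) from the fact that linear bundle morphisms are automatically additive. Your version simply spells out the substitutions, including the correct identification of the lift of $T({\sf p}_M)$ as $T(\ell_M)c_{TM}$.
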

\begin{proof}
Properties (a)-(c) follow directly from the definition of a vertical connection on the tangent bundle (see \cite[Lemma 3.3]{CC4}, while (d) follows since linear bundle morphisms are additive, as noted above.
\end{proof}

\section{Geometric and affine structures in tangent categories} 

A manifold may carry further geometric structure, such as Riemannian structure, and it is only with reference to such additional structure that one can define several important aspects of its geometry, including geodesics, curvature, and parallel transport.  The notion of connection captures such structure by means of a formalism that is quite general yet still supports all the latter geometric features, so that there is a sense in which a smooth space carries a fixed geometry once it is equipped with a chosen connection.  Thus we are led to define the notion of \textit{geometric space} in a tangent category $\C$, as an object equipped with a connection (\ref{def:geo}).  In view of the Auslander-Markus theorem (\ref{thm:aus-mar}), the category of affine manifolds has a natural generalization in an arbitrary tangent category, namely the category of geometric spaces whose associated connection is flat and torsion-free.  Thus we may pursue certain of the themes of Jubin's thesis within the category of \textit{affine geometric spaces} in $\C$, which we now define:

\begin{definition}\label{def:geo}
Let $(\C,T)$ be a tangent category.
\begin{itemize}
\item A \textbf{geometric space} in $\C$ is a pair $(M,K)$ in which $M$ is an object of $\C$
and $K:T^2M \rightarrow TM$ is a connection on $M$ (\ref{def:acx-v}).
\item A geometric space $(M,K)$ is \textbf{flat} (resp. \textbf{torsion-free}) if its associated connection $K$ is so (see \ref{defn:flat} and \ref{defn:torsionFree}).  
\item An \textbf{affine geometric space} in $\C$ is a geometric space $(M,K)$ that is both flat and torsion-free. 
\item A map of geometric spaces $f\colon(M,K)\rarr(M',K')$ is a map $f\colon M\rarr M'$ in $\C$ such that the following diagram commutes:
$$\xymatrix{T^2M \ar[d]_{T^2f} \ar[r]^{K} & TM \ar[d]^{Tf} \\ T^2M' \ar[r]_{K'} &TM'}$$
\item We write $\Geom(\C,T)$ to denote the category of geometric spaces, with the above morphisms.  We denote by $\GeomFlat(\C,T)$ and $\GeomTf(\C,T)$ the full subcategories of $\Geom(\C,T)$ consisting of the flat and torsion-free geometric spaces, respectively.
\item We write $\Aff(\C,T)$ to denote the full subcategory of $\Geom(\C,T)$ whose objects are the affine geometric spaces.
\end{itemize}
\end{definition}

\begin{example}
By Example \add{5.7} of \cite{CC4}, any differential object has a canonical choice of connection, given by the formula
\[ K = \<T(\hat{p})\hat{p},pp\>  \]
(its associated horizontal connection is $H = \<!\zq,\pi_0\hat{p},\pi_1\hat{p},\pi_1p\>$).  This connection is flat and torsion-free (see the discussion after Proposition 3.16 and Example 3.21 in \cite{CC4}).  Thus, any differential object has a canonical choice of connection to make it into an affine geometric space.  

Moreover, recall that if $(A,\hat{p}_A)$ and $(B,\hat{p}_B)$ are differential objects, then a \textbf{linear map} between such objects consists of a map $f: A \to B$ such that $T(f)\hat{p}_B = \hat{p}_Af$.  It is then easy to \add{show that }such a map is also a map between the corresponding affine geometric spaces, i.e., a map in $\Aff(\C,T)$.  
\end{example}

\begin{example} We note that any Riemannian manifold whose canonical connection (the Levi-Civita connection)
is flat is automatically affine, since the Levi-Civita connection is always torsion-free. This gives us access to a wide variety
of further examples. 
\end{example}

\add{Recall that the morphisms in the category $\Aff$ of affine manifolds are the locally affine maps (\S \ref{Aff})}.  The following result shows that \add{these are the same as maps that preserve the associated connections}:

\begin{proposition}\label{prop:affineMaps}
\add{A smooth map $f:M \rightarrow M'$ between affine manifolds $M$ and $M'$ is locally affine if and only if $f:(M,K) \rightarrow (M',K')$ is a morphism of geometric spaces in the tangent category $(\Mf,T)$ of smooth manifolds, where $K$ and $K'$ are the associated connections (\ref{thm:aus-mar}).  Consequently, the category $\Aff$ of affine manifolds is equivalent to the category $\Aff(\Mf,T)$ of affine geometric spaces in $(\Mf,T)$.}
\end{proposition}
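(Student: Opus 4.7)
The plan is to establish the equivalence by passing to affine charts and computing the connection-preservation equation pointwise. The crucial observation is that, via the Auslander--Markus correspondence (Theorem \ref{thm:aus-mar}) together with Patterson's identification of vertical connections with Koszul covariant derivatives (cf.~\ref{defnVertConnection}), the canonical connection $K$ on an affine manifold has vanishing Christoffel symbols in the designated affine charts. Concretely, if $\psi\colon U \to V \subseteq \R^n$ is a designated chart, then the induced trivializations $TU \cong V \times \R^n$ and $T^2U \cong V \times \R^n \times \R^n \times \R^n$ identify $K|_U$ with the projection $(x,u,v,w) \mapsto (x,w)$, since this is the vertical projection associated to the flat trivial connection on $\R^n$. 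One checks directly that this projection is indeed a vertical connection on ${\sf p}_{\R^n}$ and that the fibre product diagram in Corollary \ref{thm:acx} holds, consistent with Definition \ref{def:acx-v}.

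Next I would carry out the local computation. Fix designated charts around $p \in M$ and $f(p) \in M'$, and represent $f$ in coordinates as a smooth map $\phi\colon V_1 \to V' \subseteq \R^m$ with $V_1 \subseteq V \subseteq \R^n$. A direct application of the chain rule yields
\begin{align*}
T(f)(x,u) &= (\phi(x),\,D\phi(x)\,u),\\
T^2(f)(x,u,v,w) &= (\phi(x),\,D\phi(x)\,u,\,D\phi(x)\,v,\,D^2\phi(x)(v,u) + D\phi(x)\,w).
\end{align*}
Using the local descriptions of $K$ and $K'$, one computes $T(f)\bigl(K(x,u,v,w)\bigr) = (\phi(x),\, D\phi(x)\,w)$ and $K'\bigl(T^2(f)(x,u,v,w)\bigr) = (\phi(x),\, D^2\phi(x)(v,u) + D\phi(x)\,w)$. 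These agree for all $(u,v,w)$ if and only if $D^2\phi(x) \equiv 0$ on $V_1$, which is exactly the condition for $f$ to be locally affine on $U \cap f^{-1}(U')$.

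Since both conditions are local and the designated affine charts cover $M$ and $M'$, the local equivalences patch together to give the global biconditional. The ``Consequently'' clause then follows formally: the Auslander--Markus theorem supplies the bijection on objects, and the equivalence just proved supplies the equality of hom-sets, yielding the claimed equivalence $\Aff \simeq \Aff(\Mf,T)$.

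The main obstacle will be rigorously justifying the explicit local form of $K$, i.e., that the Cockett--Cruttwell vertical connection $K\colon T^2M \to TM$ restricts, in a designated affine chart, to the projection onto the final $\R^n$-factor. This hinges on two ingredients: Patterson's theorem relating vertical connections to Koszul connections, and the heart of the Auslander--Markus correspondence, namely that the designated affine charts are precisely those in which the associated Koszul connection has vanishing Christoffel symbols. Once that identification is in place, the rest reduces to the direct coordinate calculation above.
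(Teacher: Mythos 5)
Your proposal is correct and follows essentially the same route as the paper: reduce to the designated affine charts, use the Auslander--Markus/Christoffel-symbol description to identify $K$ locally with the projection $(x,u,v,w)\mapsto(x,w)$, and then compare $T(f)\circ K$ with $K'\circ T^2(f)$ via the chain rule, so that the discrepancy is exactly the second-derivative term $D^2\phi$. The paper likewise handles your flagged ``main obstacle'' by citation (to Auslander--Markus for the vanishing of the Christoffel symbols in the given charts and to the Cockett--Cruttwell example relating Christoffel symbols to the vertical connection), so no further work is needed there.
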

\begin{proof}
\add{In view of the Auslander-Markus Theorem (}\ref{thm:aus-mar}\add{), it suffices to prove the first statement above, concerning a given map $f$.}\add{  By definition, $f:(M,K) \rightarrow (M',K')$ is a morphism of geometric spaces iff $T^2(f)K' = KT(f)$.}  Since we are dealing with smooth manifolds, it suffices to know that this equality holds in each of the \add{given} charts for $M$.  However, \add{by }\cite[Thm. 1]{AuMar}\add{, the Christoffel symbols of the associated connections $K$ and $K'$ are identically zero on each of the given charts}, which means that in each \add{of these charts}, the connection $K$ (and similarly $K'$) takes the particular form
	\[ (x,v,w,a) \mapsto (x,a) \]
(see \cite[Example 3.6.1]{CC4} for the relationship of the Christoffel symbols to the vertical connection $K$).

For the remainder of the proof we will work locally; that is, we consider a pair of charts $U \cong V \subseteq \mathbb{R}^n$ and $U' \cong V' \subseteq \mathbb{R}^m$ in $M$ and $M'$, respectively, and consider the restriction of $f$ to $U \cap f^{-1}(U')$.  For simplicity, we will also simply consider the case when $m=1$.  Recall that in a chart $U \subseteq R^n$, for a point $(x,v) \in TU$,
	\[ T(f)(x,v) = (f(x),D(f)(x,v)) \]
where $D(f)(x,v)$ is the directional derivative of $f$ at $x$ in the direction of $v$, and as a result
	\[ T^2(f)(x,v,w,a) = (f(x),D(f)(x,v),D(f)(x,w),D(D(f))((x,v),(w,a))). \]
Thus, by the form \add{that} $K$ and $K'$ take, we have 	
	\[ T(f)(K(x,v,w,a)) = T(f)(x,a) = (f(x),D(f)(x,a)) \]
while
	\[ K'(T^2(f)(x,v,w,a)) = (f(x),D(D(f))((x,v),(w,a))). \]
However, by definition of the directional derivative,
	\[ D(D(f))((x,v),(w,a)) = \frac{\add{\partial[}D(f)(x,v)\add{]}}{\add{\partial} x}\cdot w + \frac{\add{\partial[}D(f)(x,v)\add{]}}{\add{\partial} v}\cdot a = w^T \cdot H(f)(x)\cdot v + D(f)(x,a) \]
where $H$ is the Hessian of $f$, ie., the matrix of second partial derivatives of $f$.  

Thus the two terms are equal if and only if 
	\[ w^T \cdot H(f)(x) \cdot v  = 0 \]
for all $x,v,w$.   But this is true if and only if each second partial derivative of $f$ at $x$ is equal to 0.  In other words, the map $f$ is connection-preserving if and only if in each local affine chart, and for each $i$, $\frac{\add{\partial}f}{\add{\partial}x_i}$ has each of its partial derivatives equal to 0.  
\end{proof}

Every map in $\Geom(\C,T)$ necessarily preserves the associated horizontal connection (\ref{thm:pres-hcx}).  We will prove this by means of the following proposition:

\begin{proposition}\label{propConnPreservingMaps}
Suppose $(f,g): {\sf q} \to {\sf q'}$ is a linear map between differential bundles with connections $(K,H)$ and $(K',H')$.  Then 
	\[ T(f)K' = Kf \Leftrightarrow (Tg \times f)H' = HT(f). \]
\end{proposition}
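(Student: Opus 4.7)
The plan is to prove both implications by exploiting the compatibility conditions $\<K,p_E\>\mu + UH = 1_{TE}$ and $HK = \pi_1 q\zq$ for the connections $(K,H)$ and $(K',H')$, together with naturality identities induced by the linearity of $(f,g)$. From the linearity and additivity of $(f,g)$, I obtain $f\lambda' = \lambda T(f)$, $0_E T(f) = f 0_{E'}$, $\zq f = g\zq'$, and $\pq f = f_2\pq'$ (with $f_2 := \<\pi_0 f,\pi_1 f\>$); combined with the standard naturalities $T(f)p_{E'} = p_E f$ and the identity $T(f)U' = U(Tg \times f)$, a short calculation from the definition $\mu = \<\pi_0\lambda,\pi_1 0_E\>T(\pq)$ yields the naturality of the vertical embedding, $\mu T(f) = f_2\mu'$. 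I will also use the identity $\mu' K' = \pi_0$, which follows from the additivity of $K'$, the retraction $\lambda' K' = 1_{E'}$, the zero-preservation $0_{E'}K' = q'\zq'$, and the unit law of $\pq'$.

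For the forward direction, I assume $T(f)K' = Kf$ and set $\alpha := (Tg \times f)H'$ and $\beta := HT(f)$, both maps $TM \times_M E \to TE'$. The compatibility equation at ${\sf q}'$ gives the decomposition $\alpha = \<\alpha K',\alpha p_{E'}\>\mu' + \alpha U'H'$, showing that any map into $TE'$ is determined by its composites with $K'$ and $U'$. Hence it suffices to verify $\alpha K' = \beta K'$ and $\alpha U' = \beta U'$. Both $\alpha U'$ and $\beta U'$ collapse to $Tg \times f$ (using $H'U' = 1$ for $\alpha$; the identity $T(f)U' = U(Tg\times f)$ and $HU = 1$ for $\beta$). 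Both $\alpha K'$ and $\beta K'$ collapse to $\pi_1 q g\zq'$ (using $H'K' = \pi_1 q'\zq'$ and $fq' = qg$ for $\alpha$; the hypothesis, $HK = \pi_1 q\zq$, and $\zq f = g\zq'$ for $\beta$).

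For the reverse direction, I assume $(Tg \times f)H' = HT(f)$. Post-composing $1_{TE} = \<K,p_E\>\mu + UH$ with $T(f)$ and using that $T(f)$ preserves addition over $p_E$ (by linearity), together with $\mu T(f) = f_2\mu'$ and $\<K,p_E\>f_2 = \<Kf,p_E f\>$, yields the decomposition
\[ T(f) = \<Kf,p_E f\>\mu' + UHT(f).\]
Composing with $K'$ and invoking the additivity of $K'$ and $\mu' K' = \pi_0$ produces $T(f)K' = Kf + UHT(f)K'$. The hypothesis now reduces the second summand: $UHT(f)K' = U(Tg\times f)H'K' = U(Tg\times f)\pi_1 q'\zq' = p_E q g\zq'$, which is exactly $(Kf \cdot q')\zq'$, the zero element of ${\sf q}'$ over the base $Kf q'$. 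The unit law of $\pq'$ then collapses the sum to $T(f)K' = Kf$.

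The chief subtlety lies in the reverse direction: because the additive bundle ${\sf q}'$ need not admit inverses, one cannot simply cancel $UHT(f)$ from two a priori decompositions of $T(f)$; instead, the argument must be arranged so that the residual horizontal contribution to $T(f)K'$ appears explicitly as a zero section over the appropriate base, whereupon it is absorbed by the unit law rather than subtracted away.
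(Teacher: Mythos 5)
Your proof is correct and follows essentially the same route as the paper's: both directions hinge on the compatibility decomposition $\<K,p\>\mu + UH = 1$ together with the naturality facts $T(f)U' = U(Tg\times f)$ and $\mu T(f) = f_2\mu'$, with the paper's forward direction precomposing with $H$ where you compare $K'$- and $U'$-components, and its reverse direction postcomposing with $\<K',p\>$ where you postcompose with $K'$ and absorb the residual zero section via the unit law. These are only cosmetic repackagings of the same argument, so no further comparison is needed.
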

\begin{proof}
Suppose $T(f)K' = Kf$.  We have
\begin{eqnarray*}
\<K',p\>\mu' + U' H' & = & 1 \\
T(f)\<K',p\>\mu' + T(f)U' H' & = & T(f) \\
\<K,p\>(f \times f)\mu' + U(Tg \times f)H' & = & T(f) \mbox{ (by Lemma 4.2 of \cite{CC4})} \\
\<K,p\>\mu T(f) + U(Tg \times f)H' & = & T(f) \mbox{ (by Lemma 2.17 of \cite{CC3})} \\
H \<K,p\>\mu T(f) + H U(Tg \times f)H' & = & H T(f) \\
0 + (Tg \times f)H' & = & HT(f)
\end{eqnarray*}
as required.  For the other direction, suppose $(Tg \times f)H' = HT(f)$.  Then we have
\begin{eqnarray*}
\<K,p\>\mu  + UH & = & 1 \\
\<K,p\>\mu T(f) + UHT(f) & = & T(f) \\
\<K,p\>(f \times f)\mu' + U(Tg \times f)H' & = & T(f) \mbox{ (by Lemma 4.2 of \cite{CC4})} \\
\<K,p\>(f \times f)\mu' + T(f)U'H' & = & T(f) \mbox{ (by Lemma 2.17 of \cite{CC3})} \\
\<K,p\>(f \times f)\mu' \<K',p\> + T(f)U'H'\<K',p\> & = & T(f)\<K',p\> \\
\<K,p\>(f \times f) & = & T(f)\<K',p\> 
\end{eqnarray*}
so that by taking the first projection of both sides, $Kf = T(f)K'$, as required.
\end{proof}

\begin{corollary}\label{thm:pres-hcx}
If $M$ and $M'$ have connections $(K,H)$ and $(K',H')$, then for any map $f: M \to M'$, 
	\[ T^2(f)K' = KT(f) \Leftrightarrow T_2(f)H' = HT^2(f). \]
\end{corollary}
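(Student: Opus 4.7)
The plan is to obtain this corollary as a direct instance of Proposition \ref{propConnPreservingMaps}, applied to the linear bundle morphism $(T(f),f)$ between the tangent bundles ${\sf p}_M$ and ${\sf p}_{M'}$, viewed as differential bundles equipped with the given connections $(K,H)$ and $(K',H')$.

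First I would verify that $(T(f),f)$ really is a linear bundle morphism from ${\sf p}_M$ to ${\sf p}_{M'}$.  The bundle-morphism equation $T(f)p_{M'} = p_M f$ is the naturality of the projection $p$, and the linearity equation $T(f)\ell_{M'} = \ell_M T^2(f)$ is the naturality of the canonical lift $\ell$; both hold by the axioms of a tangent category.  (Additivity need not be checked separately, since it is automatic for linear bundle morphisms.)

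Next, I would specialize Proposition \ref{propConnPreservingMaps} to ${\sf q} := {\sf p}_M$, ${\sf q}' := {\sf p}_{M'}$, $f_1 := T(f)$, and $f_0 := f$.  The first equivalent equation $T(f_1)K' = Kf_1$ becomes $T^2(f)K' = KT(f)$.  The second equivalent equation $(T(f_0)\times f_1)H' = HT(f_1)$ becomes $(T(f)\times T(f))H' = HT^2(f)$, which is exactly $T_2(f)H' = HT^2(f)$ once one recognizes that $T_2(f):TM\times_M TM \to TM'\times_{M'} TM'$ is by definition the map $T(f)\times_f T(f)$ induced on fibered products.

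I do not anticipate any substantive obstacle; the work of the corollary is already done by Proposition \ref{propConnPreservingMaps}, and what remains is only the routine identification of $(T(f),f)$ as a linear bundle morphism together with the bookkeeping above.
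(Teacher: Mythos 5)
Your proposal is correct and follows the same route as the paper: the paper likewise deduces the corollary by applying Proposition \ref{propConnPreservingMaps} to the linear bundle morphism $(T(f),f)$ between the tangent bundles (citing Proposition 2.4 of \cite{CC4} for that fact, which you instead verify directly via naturality of $p$ and $\ell$). The specialization and bookkeeping you describe are exactly what is needed.
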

\begin{proof}
By Proposition 2.4 of \cite{CC4}, the pair $(T(f),f)$ is a linear bundle morphism from the tangent bundle of $M$ to the tangent bundle of $M'$.  Then applying the previous result (\ref{propConnPreservingMaps}) to this linear bundle morphism gives the desired result.
\end{proof}

\section{Lifting tangent structure to the geometric categories} 

The main result we would like to prove in this section is the following:

\begin{theorem}\label{thm:aff-tcat}
Let $(\C,T)$ be a tangent category. There is a functor $T_*\colon{\sf Aff}(\C,T)\rarr{\sf Aff}(\C,T)$
given on objects as follows:
\[(M,K)\mapsto (TM,T(c)cT(K)c)\]
This functor makes ${\sf Aff}(\C,T)$ a tangent category.
\end{theorem}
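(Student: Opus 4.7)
The plan is to define, for each affine geometric space $(M,K)$, the morphism $\tilde{K} := T(c_M)\, c_{TM}\, T(K)\, c_M : T^3M \to T^2M$ and then to proceed in four steps. First, verify that $\tilde{K}$ is a connection on $TM$ in the sense of Definition \ref{def:acx-v}. Second, verify that $\tilde{K}$ is flat and torsion-free, so that $T_*$ takes values in $\Aff(\C,T)$. Third, verify functoriality on morphisms. Fourth, show that the tangent structure of $(\C,T)$ lifts to $\Aff(\C,T)$: every tangent natural transformation $p, 0, +, \ell, c$ is connection-preserving with respect to the connections induced by $T_*$, and the requisite pullbacks are created by the forgetful functor $U : \Aff(\C,T) \to \C$. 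Since $U$ is faithful, the tangent category axioms descend from those of $(\C,T)$ once each piece of structure has been shown to lift.

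For step one, the four requirements of Definition \ref{def:acx-v} (retraction of $\ell_{TM}$, the linearity conditions [C.1] and [C.2], and the fibre product condition of Corollary \ref{thm:acx}) reduce to the corresponding properties of $K$ recorded in Proposition \ref{prop:propK}, via diagram chases that use the naturality of $c$ together with standard coherence equations for $c$ and $\ell$ in a tangent structure. For step two, the identities $c\, T(\tilde{K})\, \tilde{K} = T(\tilde{K})\, \tilde{K}$ (flatness) and $c\, \tilde{K} = \tilde{K}$ (torsion-freeness) unfold, after rewriting using naturality, the involutivity of $c$, and the Yang--Baxter-type identity relating $T(c)$ and $c$, to the flatness $c\, T(K)\, K = T(K)\, K$ and torsion-freeness $c\, K = K$ of $K$, respectively.

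For step three, given a morphism $f : (M,K) \to (M',K')$ in $\Aff(\C,T)$, the connection-preservation condition for $Tf$, namely $\tilde{K}\, T^2(f) = T(f)\, \tilde{K}'$, unwinds via naturality of $c$ to the hypothesis $K\, T(f) = T^2(f)\, K'$ that $f$ already satisfies. For step four, each of $p, 0, +, \ell, c$ must be checked separately. For instance, to see that $p_M : (TM, \tilde{K}) \to (M, K)$ is a morphism of geometric spaces, one reduces the required identity $\tilde{K}\, T(p_M) = T^2(p_M)\, K$ by naturality of $c$ and standard identities relating $c$ with $p$ to the bundle-morphism property of $K$ from Proposition \ref{prop:propK}(b). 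Analogous verifications for $0, +, \ell, c$ proceed by the same strategy, reducing via tangent coherences and Proposition \ref{prop:propK} to properties of $K$ already available. The pullbacks needed for the tangent structure on $\Aff(\C,T)$ are constructed from the underlying pullbacks in $\C$, equipped with canonical connections pulled back from $K$.

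The main obstacle will be step four, in particular the verification that the canonical flip $c_M : T^2M \to T^2M$ itself is a morphism of geometric spaces $(T^2M, \tilde{\tilde{K}}) \to (T^2M, \tilde{\tilde{K}})$. This check will involve essentially all of the $c$-coherences simultaneously and make essential use of both the flatness and torsion-freeness of $K$. While each individual verification ultimately amounts to a systematic diagram chase using the tangent coherence axioms, careful bookkeeping of natural transformations at various positions within a three-fold nested tangent bundle is the principal technical difficulty.
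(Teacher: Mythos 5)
Your outline is essentially a hands-on version of the argument the paper gives, and the underlying computations you would have to perform are the same ones the paper performs; the difference is organizational. The paper factors the work through general machinery: it shows that any \emph{strong morphism of tangent categories} sends connections to connections (Proposition \ref{thm:sm-acx}) and preserves flatness and torsion-freeness (Proposition \ref{thm:sm-tf-fl}), that any \emph{tangent transformation} between strong morphisms is automatically connection-preserving componentwise (Lemma \ref{thm:ttr-mor-conn}), and that these assignments assemble into 2-functors $\Geom,\Aff:\Tan\rightarrow\Cat$ (Theorem \ref{thm:geom-2func-to-cat}). Your $\tilde K$ is exactly $K_T$ for the strong morphism $(T,c)$, your step four is exactly the observation (Lemma \ref{thm:tstr-sm-tt}) that $p,+,0,\ell,c,\pi_i$ are tangent transformations, and your appeal to faithfulness of $U$ for the equational axioms is what the paper packages as 2-functoriality. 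What the paper's route buys is that each coherence ($c$ included) is checked once, at the level of a one-line identity among tangent transformations, rather than as a bespoke chase inside $T^3M$; what your route buys is that no 2-categorical scaffolding is needed. Your treatment of the limits (reflection/creation along $U$, with the proviso that the underlying limits are preserved by all $T^k$) matches the paper's Lemmas \ref{thm:refl-lim-cx} and \ref{thm:fps-in-aff}.

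Two inaccuracies in your roadmap should be corrected before execution. First, in step one the fibre-product condition of Corollary \ref{thm:acx} does \emph{not} reduce to Proposition \ref{prop:propK}, which records only the retraction, bundle-morphism, linearity, and additivity identities; you must instead use that $T$ preserves the fibre product diagram \eqref{eq:acx-fp} (because that diagram presents $T^2M$ as a third fibre power of $p_M$, and such fibre powers are preserved by $T$ by the tangent category axioms) and then transport along the isomorphism $c_TT(c)$ --- this is precisely how Proposition \ref{thm:sm-acx} proceeds. Second, your predicted ``main obstacle'' is misdiagnosed: the verification that $c_M$ is a morphism of geometric spaces $(T^2M,\tilde{\tilde K})\rightarrow(T^2M,\tilde{\tilde K})$ uses only naturality and the coherence $c_TT(c)c_T=T(c)c_TT(c)$, and makes no use of flatness or torsion-freeness of $K$. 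Indeed the paper proves the stronger statement that $\Geom(\C,T)$, with no flatness or torsion-freeness hypotheses, is already a tangent category (Theorem \ref{thm:aff-conn-tcats}); flatness and torsion-freeness enter only in your step two, to guarantee that $T_*$ restricts from $\Geom(\C,T)$ to $\Aff(\C,T)$. Neither point derails the proof, but as written the plan attributes the wrong inputs to two of its steps.
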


However, some of the structures that arise in proving this result will also lead us to prove an interesting alternate characterization of flat torsion-free connections (Theorem \ref{propFTFConditions}).  

Before proving the result above, we will pause to consider where the above formula comes from.  Given any \textit{strong morphism of tangent categories} $F:\C \rightarrow \C'$, in the sense of \ref{def:str-mor} below, we shall show that $F$ sends a connection on $M$ in $\C$ to a connection on $FM$ in $\C'$ (\ref{thm:sm-acx}).  In particular, the tangent functor $T:\C \rightarrow \C$ is a strong morphism of tangent categories when equipped with the transformation $c$, so by applying $T$ to a connection $K$ on $M$ and composing with a few instances of $c$ we obtain an associated connection $T(c)cT(K)c$ on $TM$.

\begin{definition}[{\cite{CC1}}]\label{def:str-mor}
Given tangent categories $(\C,T,p,0,+,\ell,c)$ and $(\C',T',p',0',+',\ell',c')$, a \textbf{morphism of tangent categories} is a functor $F:\C \rightarrow \C'$ equipped with a natural transformation $\alpha = \alpha^F:FT \rightarrow T'F$ such that $F$ preserves all the pullbacks that are required to exist as part of the tangent structure on $\C$, and such that
$$\alpha p'_F = F(p),\;\;\;\; F(0)\alpha = 0'_F,\;\;\;\; F(+)\alpha = \alpha_2+'_F,$$
$$F(\ell)\alpha^{[2]} = \alpha\ell'_F,\;\;\;\; F(c)\alpha^{[2]} = \alpha^{[2]}c'_F.$$
Here $\alpha^{[2]} = \alpha_TT'(\alpha):FT^2 \rightarrow {T'}^2F$, and $\alpha_2:FT_2 \rightarrow T'_2F$ is the natural transformation whose components
$$F(TM \times_M TM) = FTM \times_{FM} FTM \longrightarrow T'FM \times_{FM} T'FM$$
at each object $M$ are induced by $\alpha_M$ on each factor.  A morphism of tangent categories $(F,\alpha)$ is said to be \textbf{strong} if $\alpha$ is invertible, and \textbf{strict} if $\alpha$ is an identity. 
\end{definition}

Morphisms of tangent categories are the arrows of a category \cite[Def. 2.7]{CC1}, in which the composite of morphisms $(F,\alpha^F):(\C,T) \rightarrow (\C',T')$ and $(G,\alpha^G):(\C',T') \rightarrow (\C'',T'')$ is $(GF,\alpha^F * \alpha^G)$, where we define $\alpha^F * \alpha^G = G(\alpha^F)\alpha^G_F:GFT \rightarrow T''GF$.

\subsection{}\label{thm:sm-func-db} A strong morphism of tangent categories $F:(\C,T) \rightarrow (\C',T')$ sends each differential bundle ${\sf q} = (q:E \rightarrow M,\pq,\zq,\lambda)$ in $\C$ to a differential bundle 
$$F({\sf q}) = (F(q),F(\pq),F(\zq),F(\lambda)\alpha^F_E)$$
in $\C'$, and this assignment is functorial with respect to linear morphisms of differential bundles \cite[Prop. 4.22]{CC3}.

\begin{lemma}\label{thm:isos-db-sm}
Let $(F,\alpha):(\C,T) \rightarrow (\C',T')$ be a strong morphism of tangent categories.  Then for each object $M$ of $\C$ we have linear isomorphisms of differential bundles as follows:
\begin{enumerate}
\item $(\alpha_M,1_{FM})\;:\;F({\sf p}_M) \xrightarrow{\sim} {\sf p}'_{FM}$,
\item $(\alpha^{[2]}_M,\alpha_M)\;:\;F({\sf p}_{TM}) \xrightarrow{\sim} {\sf p}'_{T'FM}$.
\item $(\alpha^{[2]}_M,\alpha_M)\;:\;FT({\sf p}_M) \xrightarrow{\sim} T'({\sf p}'_{FM})$,
\end{enumerate}
\end{lemma}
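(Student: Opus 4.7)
The plan is to verify each of the three items by direct checking that the indicated pair $(f_1,f_0)$ is a linear morphism of differential bundles; invertibility is automatic, since the strong hypothesis makes $\alpha$ a natural isomorphism, whence $\alpha_M$ and $\alpha^{[2]}_M = \alpha_{TM} T'(\alpha_M)$ are both iso. The source lift in each case is obtained from the formula $F(\lambda)\alpha_E$ of \ref{thm:sm-func-db}, and additivity comes free since linear bundle morphisms are additive (\cite[Prop.~2.16]{CC3}). So it suffices to verify in each case the bundle morphism condition $f_1 q' = q f_0$ and the linearity condition $f_1 \lambda' = \lambda T'(f_1)$, using the axioms of \ref{def:str-mor}.

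For (1), the bundle morphism condition is exactly the axiom $\alpha p'_F = F(p)$ at $M$, and the linearity condition $\alpha_M \ell'_{FM} = F(\ell_M)\alpha_{TM} T'(\alpha_M)$ is the axiom $F(\ell)\alpha^{[2]} = \alpha\ell'_F$ at $M$ after expanding $\alpha^{[2]}_M$. For (2), the bundle morphism condition $\alpha^{[2]}_M p'_{T'FM} = F(p_{TM})\alpha_M$ is obtained by using naturality of $p'$ at $\alpha_M$ to move $T'(\alpha_M)$ past $p'_{T'FM}$, then invoking $\alpha p'_F = F(p)$ at $TM$. The linearity condition expands to $\alpha_{TM}T'(\alpha_M)\ell'_{T'FM} = F(\ell_{TM})\alpha_{T^2M}T'(\alpha_{TM})T'^2(\alpha_M)$, which follows by naturality of $\ell'$ at $\alpha_M$ together with the axiom $F(\ell)\alpha^{[2]} = \alpha\ell'_F$ at $TM$.

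Case (3) is the most involved, because now both lifts incorporate the canonical flip: $FT({\sf p}_M)$ has lift $F(T(\ell_M)c_{TM})\alpha_{T^2M}$, while $T'({\sf p}'_{FM})$ has lift $T'(\ell'_{FM})c'_{T'FM}$. The bundle morphism condition $\alpha^{[2]}_M T'(p'_{FM}) = F(T(p_M))\alpha_M$ combines the axiom $\alpha p'_F = F(p)$ at $M$ with the naturality of $\alpha:FT \rightarrow T'F$ at the morphism $p_M$. The hard part is the linearity condition, which requires both the axiom $F(\ell)\alpha^{[2]} = \alpha\ell'_F$ and the axiom $F(c)\alpha^{[2]} = \alpha^{[2]}c'_F$, together with naturalities of $c'$ and $\ell'$ at $\alpha_M$. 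A conceptually cleaner route, and a useful cross-check, is to note that the canonical flips furnish linear isomorphisms of differential bundles ${\sf p}_{TM} \xrightarrow{\sim} T({\sf p}_M)$ (via $c_M$, using $c\circ c = 1$) and ${\sf p}'_{T'FM} \xrightarrow{\sim} T'({\sf p}'_{FM})$ (via $c'_{FM}$); then (3) can be deduced from (2) by pre- and post-composing with the $F$-image of the first and with the second, the equality of the resulting composite with $(\alpha^{[2]}_M,\alpha_M)$ amounting precisely to $F(c)\alpha^{[2]} = \alpha^{[2]}c'_F$.
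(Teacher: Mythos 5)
Your proposal is correct and follows essentially the same route as the paper: item (1) is read off directly from the axioms of \ref{def:str-mor}, item (2) is the instance of (1) at $TM$ composed with the linear isomorphism induced by $\alpha_M$ (your naturality computations for $p'$ and $\ell'$ are just this unpacked), and your ``conceptually cleaner route'' for (3) --- conjugating (2) by the flip isomorphisms $T({\sf p}_M)\cong{\sf p}_{TM}$ and $T'({\sf p}'_{FM})\cong{\sf p}'_{T'FM}$ and invoking $F(c)\alpha^{[2]}=\alpha^{[2]}c'_F$ together with $c'$ being an involution --- is exactly the paper's argument.
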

\begin{proof}
The fact that 1 is a linear bundle morphism follows immediately from the axioms in \ref{def:str-mor}.  In particular, $(\alpha_{TM},1_{FTM}):F({\sf p}_{TM}) \rightarrow {\sf p}'_{FTM}$ is a linear isomorphism, but we also know that the isomorphism $\alpha_M:FTM \rightarrow T'FM$ induces a linear isomorphism $(T'(\alpha_M),\alpha_M):{\sf p}'_{FTM} \rightarrow {\sf p}'_{T'FM}$, and by composition we obtain the linear isomorphism needed in 2.  Also, the axioms for a tangent category yield linear isomorphisms $(c_M,1_{TM}):T({\sf p}_M) \rightarrow {\sf p}_{TM}$ and $(c'_{FM},1_{T'FM}):T'({\sf p}'_{FM}) \rightarrow {\sf p}'_{T'FM}$, so by the functoriality in \ref{thm:sm-func-db}, and the fact that $c'$ is an involution, we obtain a linear composite
$$FT({\sf p}_M) \xrightarrow{(F(c_M),1_{FTM})} F({\sf p}_{TM}) \xrightarrow{(\alpha^{[2]}_M,\alpha_M)} {\sf p}'_{T'FM} \xrightarrow{(c'_{FM},1_{T'FM})} T'({\sf p}'_{FM}),$$
which can be expressed equally as $(\alpha^{[2]}_M,\alpha_M)$ since $F(c_M)\alpha^{[2]}_Mc'_{FM} = \alpha^{[2]}_Mc'_{FM}c'_{FM} = \alpha^{[2]}_M$ by \ref{def:str-mor}.
\end{proof}

\begin{proposition}\label{thm:sm-acx}
Let $F:(\C,T) \rightarrow (\C',T')$ be a strong morphism of tangent categories, and let $K:T^2M \rightarrow TM$ be a connection on $M$ in $\C$.  Then the composite
$$K_F = \left({T'}^2FM \xrightarrow{\alpha^{[-2]}_M} FT^2M \xrightarrow{F(K)} FTM \xrightarrow{\alpha_M} T'FM\right)$$
is a connection on $FM$, where $\alpha = \alpha^F$ and $\alpha^{[-2]} = (\alpha^{[2]})^{-1}$. 
\end{proposition}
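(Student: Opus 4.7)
My plan is to verify that $K_F$ satisfies the three conditions characterizing a connection on $FM$ in the sense of Definition \ref{def:acx-v}: (i) $K_F$ is a retraction of the lift $\ell'_{FM}$; (ii) the pair $(K_F, p'_{FM})$ is a linear bundle morphism from both $T'({\sf p}'_{FM})$ and ${\sf p}'_{T'FM}$ to ${\sf p}'_{FM}$; and (iii) the corresponding instance of diagram \eqref{eq:acx-fp}, with $M$ replaced by $FM$ and $K$ by $K_F$, is a fibre product. The strategy in each case will be to transfer the corresponding property of $K$ from $\C$ to $\C'$ using the functoriality of $F$ on differential bundles (\ref{thm:sm-func-db}), the linear isomorphisms of Lemma \ref{thm:isos-db-sm}, and the coherence axioms for $\alpha$ in Definition \ref{def:str-mor}.

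For (i), I would use the axiom $F(\ell_M)\alpha^{[2]}_M = \alpha_M \ell'_{FM}$ to rewrite $\ell'_{FM} = \alpha_M^{-1} F(\ell_M) \alpha^{[2]}_M$; after this, the composite $\alpha^{[2]}_M \alpha^{[-2]}_M$ cancels and Proposition \ref{prop:propK}(a) gives $\ell'_{FM} K_F = \alpha_M^{-1} F(\ell_M K) \alpha_M = 1_{T'FM}$. For (ii), the hypothesis that $K$ is a connection on $M$ provides linear bundle morphisms $(K, p_M): T({\sf p}_M) \rightarrow {\sf p}_M$ and $(K, p_M): {\sf p}_{TM} \rightarrow {\sf p}_M$; since $F$ sends linear bundle morphisms to linear bundle morphisms, their $F$-images are linear, and composing at source and target with the appropriate linear isomorphisms from Lemma \ref{thm:isos-db-sm} (and using $\alpha_M p'_{FM} = F(p_M)$) yields the required linear bundle morphism properties for $(K_F, p'_{FM})$ in $\C'$.

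The main obstacle will be (iii), the fibre product condition. By Corollary \ref{thm:acx}, the hypothesis on $K$ means that $\langle T(p_M), p_{TM}, K \rangle : T^2M \rightarrow TM \times_M TM \times_M TM$ is an isomorphism, where the codomain is the three-fold fibre power of $p_M$ along itself. Since strong morphisms of tangent categories preserve the pullbacks required in the tangent structure, applying $F$ produces an isomorphism $FT^2M \rightarrow FTM \times_{FM} FTM \times_{FM} FTM$ with components $F(T(p_M)), F(p_{TM}), F(K)$. I would then precompose with $\alpha^{[-2]}_M$ and postcompose with the isomorphism $FTM \times_{FM} FTM \times_{FM} FTM \rightarrow T'FM \times_{FM} T'FM \times_{FM} T'FM$ induced componentwise by $\alpha_M$ (which respects the fibre product because $\alpha_M p'_{FM} = F(p_M)$). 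To conclude, I would check that the three components of the resulting isomorphism ${T'}^2FM \rightarrow T'FM \times_{FM} T'FM \times_{FM} T'FM$ agree respectively with $T'(p'_{FM}), p'_{T'FM}, K_F$: the third is immediate from the definition of $K_F$, while the first two reduce to the identities $\alpha^{[2]}_M T'(p'_{FM}) = F(T(p_M)) \alpha_M$ and $\alpha^{[2]}_M p'_{T'FM} = F(p_{TM}) \alpha_M$, which follow from naturality of $\alpha$, the axiom $\alpha p'_F = F(p)$, and the definition $\alpha^{[2]}_M = \alpha_{TM} T'(\alpha_M)$. The bulk of the work is in this component-matching; although the verifications are mechanical, getting the $\alpha$-bookkeeping right is where care is required.
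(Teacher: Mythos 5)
Your proposal is correct and follows essentially the same route as the paper's own proof: the retraction property via the axiom $F(\ell)\alpha^{[2]} = \alpha\ell'_F$, the two linearity conditions by transporting the $F$-images of the linear bundle morphisms $(K,p_M)$ along the isomorphisms of Lemma \ref{thm:isos-db-sm}, and the fibre product condition by applying $F$ to the diagram \eqref{eq:acx-fp} and matching components using $\alpha^{[2]}_M T'(p'_{FM}) = FT(p_M)\alpha_M$ and $\alpha^{[2]}_M p'_{T'FM} = F(p_{TM})\alpha_M$. The only difference is the order in which the three conditions are treated, which is immaterial.
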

\begin{proof}
By \ref{thm:acx} and \ref{def:acx-v}, it suffices to show that $K_F$ is a vertical connection and that
\begin{equation}\label{eq:sm-fp}
\xymatrix{
                 & {T'}^2FM \ar[dl]_{T'(p'_{FM})} \ar[d]|{p'_{T'FM}} \ar[dr]^{K_F} & \\
T'FM \ar[dr]_{p'_{FM}} & T'FM \ar[d]|{p'_{FM}}                                 & T'FM \ar[dl]^{p'_{FM}}\\
                 & FM                                              &
}
\end{equation}
is a fibre product diagram in $\C'$.

By \ref{thm:acx}, we know that the diagram \eqref{eq:acx-fp} presents $T^2M$ as a third fibre power of $p_M:TM \rightarrow M$.  But $F$ preserves finite fibre powers of $p_M$, so $F$ sends the diagram \eqref{eq:acx-fp} to a fibre product diagram
$$
\xymatrix{
                 & FT^2M \ar[dl]_{FT(p_M)} \ar[d]|{F(p_{TM})} \ar[dr]^{F(K)} & \\
FTM \ar[dr]_{F(p_M)} & FTM \ar[d]|{F(p_M)}                                 & FTM \ar[dl]^{F(p_M)}\\
                 & FM                                              &
}
$$
in $\C'$.  By composing with the isomorphism $\alpha^{[-2]}_M:{T'}^2FM \rightarrow FT^2M$, we find that the morphisms
$$\alpha^{[-2]}_MFT(p_M),\;\alpha^{[-2]}_MF(p_{TM}),\;\alpha^{[-2]}_MF(K)\;\;:\;\;{T'}^2FM \rightarrow FTM$$
present ${T'}^2FM$ as a third fibre power of $F(p_M)$ in $\C'$.  But $F(p_M) = \alpha_Mp'_{FM}:FTM \rightarrow FM$ since $(F,\alpha)$ is a morphism of tangent categories, so since $\alpha$ is an isomorphism we deduce that the composites
$$f_1 := \alpha^{[-2]}_MFT(p_M)\alpha_M,\;f_2 := \alpha^{[-2]}_MF(p_{TM})\alpha_M,\;f_3 := \alpha^{[-2]}_MF(K)\alpha_M\;\;:\;\;{T'}^2FM \rightarrow T'FM$$
present ${T'}^2FM$ as a third fibre power of $p'_{FM}:T'FM \rightarrow FM$ in $\C'$.

Hence, in order to show that \eqref{eq:sm-fp} is a fibre product diagram, it suffices to show that $f_1 = T'(p'_{FM}), f_2 = p'_{T'FM}, f_3 = K_F$.  The third of these equations holds by the definition of $K_F$.  The first two equations also hold, because
$$\alpha^{[2]}_MT'(p'_{FM}) = \alpha_{TM}T'(\alpha_M)T'(p'_{FM}) = \alpha_{TM}T'F(p_M) = FT(p_M)\alpha_M$$
$$\alpha^{[2]}_Mp'_{T'FM} = \alpha_{TM}T'(\alpha_M)p'_{T'FM} = \alpha_{TM}p'_{FTM}\alpha_M = F(p_{TM})\alpha_M$$
since $(F,\alpha)$ is a strong morphism and $p'$ is natural.

Now it suffices to show that $K_F$ is a vertical connection on ${\sf p}'_{FM}$.  Firstly, $K_F$ is a retraction of $\ell'_{FM}:T'FM \rightarrow {T'}^2FM$ since $\ell'_{FM}K_F = \ell'_{FM}\alpha^{[-2]}_MF(K)\alpha_M = \alpha^{-1}_MF(\ell_M)F(K)\alpha_M = \alpha^{-1}_MF(\ell_MK)\alpha_M = 1_{T'FM}$, because $\ell_MK = 1_{TM}$.  Hence it suffices to show that $(K_F,p'_{FM}):T'({\sf p}'_{FM}) \rightarrow {\sf p}'_{FM}$ and $(K_F,p'_{FM}):{\sf p}'_{T'FM}\rightarrow {\sf p}'_{FM}$ are linear morphisms of differential bundles.  But since $K$ is a vertical connection on ${\sf p}_M$, we know that $(K,p_M):T({\sf p}_M) \rightarrow {\sf p}_M$ and $(K,p_M):{\sf p}_{TM} \rightarrow {\sf p}_M$ are linear morphisms of differential bundles and so, by \ref{thm:sm-func-db}, are sent by $F$ to linear morphisms of differential bundles
$$(F(K),F(p_M)):FT({\sf p}_M) \rightarrow F({\sf p}_M)$$
$$(F(K),F(p_M)):F({\sf p}_{TM}) \rightarrow F({\sf p}_M)\;.$$
Hence by composition with the linear isomorphisms in \ref{thm:isos-db-sm} we obtain linear bundle morphisms
$$T'({\sf p}'_{FM}) \xrightarrow{(\alpha^{[-2]}_M,\alpha^{-1}_M)} FT({\sf p}_M) \xrightarrow{(F(K),F(p_M))} F({\sf p}_M) \xrightarrow{(\alpha_M,1_{FM})} {\sf p}'_{FM}$$
$${\sf p}'_{T'FM} \xrightarrow{(\alpha^{[-2]}_M,\alpha^{-1}_M)} F({\sf p}_{TM}) \xrightarrow{(F(K),F(p_M))} F({\sf p}_M) \xrightarrow{(\alpha_M,1_{FM})} {\sf p}'_{FM}.$$
But the pair $(K_F,p'_{FM})$ underlies each of these two composites.
\end{proof}

\begin{proposition}\label{thm:sm-hcx}
Let $(F,\alpha):(\C,T) \rightarrow (\C',T')$ be a strong morphism of tangent categories, and let $K:T^2M \rightarrow TM$ be a connection on an object $M$ of $\C$.  Then the horizontal connection associated to the connection $K_F$ is the composite
$$H_F = \left(T'_2FM \xrightarrow{\alpha^{-1}_2} FT_2M \xrightarrow{F(H)} FT^2M \xrightarrow{\alpha^{[2]}_M} {T'}^2FM \right),$$
recalling that $\alpha_2$ and $\alpha^{[2]}$ are defined in \ref{def:str-mor}.
\end{proposition}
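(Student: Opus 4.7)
The plan is to invoke the uniqueness clause of Theorem \ref{thm:hcx-for-evcx} applied to the connection $K_F$ on $FM$: to show that a given morphism ${T'}_2FM \to {T'}^2FM$ is the associated horizontal connection of $K_F$, it suffices to verify the three characterizing equations
$$H' \cdot T'(p'_{FM}) = \pi'_0,\qquad H' \cdot p'_{T'FM} = \pi'_1,\qquad H' \cdot K_F = p'_2 \cdot 0'_{FM}.$$
I will check each of these for the candidate $H_F = \alpha_2^{-1} \cdot F(H) \cdot \alpha^{[2]}_M$, each time pushing the right-hand factor through the composite and then invoking the corresponding equation for $H$ in $\C$ from Theorem \ref{thm:hcx-for-evcx}.

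For the first equation, I would expand $\alpha^{[2]}_M = \alpha_{TM} \cdot T'(\alpha_M)$, use the strong-morphism axiom $\alpha_M \cdot p'_{FM} = F(p_M)$ to rewrite $T'(\alpha_M) \cdot T'(p'_{FM}) = T'(F(p_M))$, and then apply naturality of $\alpha$ at $p_M$ to obtain $F(T(p_M)) \cdot \alpha_M$.  Composing with $F(H)$ on the left and using $H \cdot T(p_M) = \pi_0$ yields $\alpha_2^{-1} \cdot F(\pi_0) \cdot \alpha_M$, which equals $\pi'_0$ by the defining property of $\alpha_2$, namely $\alpha_2 \cdot \pi'_i = F(\pi_i) \cdot \alpha_M$.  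The second equation is analogous: naturality of $p'$ commutes $T'(\alpha_M)$ past $p'_{T'FM}$, then the axiom $\alpha_{TM} \cdot p'_{FTM} = F(p_{TM})$ applies, and $H \cdot p_{TM} = \pi_1$ finishes the calculation.  The third equation is the most direct, since $\alpha^{[2]}_M$ cancels $\alpha^{[-2]}_M$ immediately in $H_F \cdot K_F$; the composite reduces to $\alpha_2^{-1} \cdot F(HK) \cdot \alpha_M = \alpha_2^{-1} \cdot F(p_2 \cdot 0_M) \cdot \alpha_M$, and then the axioms $F(0_M) \cdot \alpha_M = 0'_{FM}$ and $F(p_M) = \alpha_M \cdot p'_{FM}$, together with the defining property of $\alpha_2$, rewrite this as $\pi'_0 \cdot p'_{FM} \cdot 0'_{FM} = p'_2 \cdot 0'_{FM}$.

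I do not anticipate a substantial obstacle here; the argument is essentially bookkeeping with the strong-morphism axioms from Definition \ref{def:str-mor} and the naturalities of $\alpha$, $p'$, and $T'$.  The only point requiring care is keeping straight which copy of $\alpha$ (i.e.\ $\alpha_M$, $\alpha_{TM}$, $T'(\alpha_M)$, or $\alpha_2$) is being moved past which transformation; the preservation by $F$ of the pullback $T_2M$ (part of the definition of a morphism of tangent categories) is what makes $\alpha_2$ an isomorphism and legitimates the identity $\alpha_2^{-1} \cdot F(\pi_i) \cdot \alpha_M = \pi'_i$ that is invoked throughout.
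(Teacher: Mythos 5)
Your proposal is correct and follows essentially the same route as the paper: both invoke the uniqueness clause of Theorem \ref{thm:hcx-for-evcx} for $K_F$ and verify the three characterizing equations by pushing the structural maps through $\alpha_2^{-1}F(H)\alpha^{[2]}_M$ using the axioms of Definition \ref{def:str-mor}. The only cosmetic difference is that the paper cites Lemma \ref{thm:isos-db-sm}(2),(3) for the identities $\alpha^{[2]}_MT'(p'_{FM}) = FT(p_M)\alpha_M$ and $\alpha^{[2]}_Mp'_{T'FM} = F(p_{TM})\alpha_M$, which you re-derive directly from naturality and the strong-morphism axioms.
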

\begin{proof}
By \ref{thm:hcx-for-evcx}, it suffices to establish the following equations
$$H_FT'(p'_{FM}) = \pi_0,\;\;H_Fp'_{T'FM} = \pi_1,\;\;H_FK_F = p'_20'_{FM}\;\;:\;\;T'_2FM \longrightarrow T'FM,$$
but we know that $H$ satisfies the analogous equations $HT(p_M) = \pi_0$, $Hp_{TM} = \pi_1$, $HK = p_20_M$.  Hence we compute that
$$
\begin{array}{llll}
H_FT'(p'_{FM}) & = & \alpha_2^{-1}F(H)\alpha^{[2]}_MT'(p'_{FM}) & \\
                       & = & \alpha_2^{-1}F(H)FT(p_M)\alpha_M & \text{(by \ref{thm:isos-db-sm}(3))}\\
                       & = & \alpha_2^{-1}F(\pi_0)\alpha_M &\\
                       & = & \pi_0\alpha_M^{-1}\alpha_M & \text{(by the definition of $\alpha_2$)}\\
                       & = & \pi_0 &
\end{array}
$$
$$
\begin{array}{llll}
H_Fp'_{T'FM}   & = & \alpha_2^{-1}F(H)\alpha^{[2]}_Mp'_{T'FM} &\\
                       & = & \alpha_2^{-1}F(H)F(p_{TM})\alpha_M & \text{(by \ref{thm:isos-db-sm}(2))}\\
                       & = & \alpha_2^{-1}F(\pi_1)\alpha_M &\\
                       & = & \pi_1\alpha_M^{-1}\alpha_M & \text{(by the definition of $\alpha_2$)}\\
                       & = & \pi_1 &\\
H_FK_F & = & \alpha_2^{-1}F(H)\alpha^{[2]}_M\alpha^{[-2]}_MF(K)\alpha_M &\\
                       & = & \alpha_2^{-1}F(H)F(K)\alpha_M &\\
                       & = & \alpha_2^{-1}F(p_2)F(0_M)\alpha_M &\\
                       & = & \alpha_2^{-1}F(p_2)0'_{FM} & \text{(by \ref{def:str-mor})}\\
                       & = & p'_20'_{FM} &
\end{array}
$$
since it follows readily from \ref{def:str-mor} that $\alpha_2^{-1}F(p_2) = p'_2$.
\end{proof}

\begin{proposition}\label{thm:sm-tf-fl}
Let $F:(\C,T) \rightarrow (\C',T')$ be a strong morphism of tangent categories, and let $K:T^2M \rightarrow TM$ be a connection on an object $M$ of $\C$.
\begin{enumerate}
\item If $K$ is torsion-free, then $K_F$ is a torsion-free connection on $FM$.
\item If $K$ is flat, then $K_F$ is a flat connection on $FM$.
\end{enumerate}
\end{proposition}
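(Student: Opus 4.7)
The plan is to apply $F$ to the torsion-free (resp.\ flatness) equation for $K$ and then to conjugate by the iterates of $\alpha$ that appear implicitly in the definition of $K_F$. In both parts the essential ingredient is the tangent-morphism axiom $F(c)\alpha^{[2]} = \alpha^{[2]}c'$ from \ref{def:str-mor}, which transports the canonical flip across $F$.

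For part (1), I would rewrite the axiom evaluated at $M$ as $c'_{FM}\,\alpha^{[-2]}_M = \alpha^{[-2]}_M\,F(c_M)$ and simply compute
\[c'_{FM}\,K_F \;=\; c'_{FM}\,\alpha^{[-2]}_M\,F(K)\,\alpha_M \;=\; \alpha^{[-2]}_M\,F(c_M\,K)\,\alpha_M \;=\; \alpha^{[-2]}_M\,F(K)\,\alpha_M \;=\; K_F,\]
where the torsion-freeness identity $c_M K = K$ is used at the third equality.

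For part (2), I would introduce the ``level-three'' isomorphism $\alpha^{[3]}_M := \alpha^{[2]}_{TM} \cdot {T'}^2(\alpha_M)\colon FT^3M \xrightarrow{\sim} {T'}^3 FM$ and first establish that
\[T'(K_F)\,K_F \;=\; (\alpha^{[3]}_M)^{-1}\,F(T(K)\,K)\,\alpha_M.\]
To see this, I would expand $T'(K_F)K_F$ via the definition of $K_F$, observe that the middle factor $T'(\alpha_M)\,\alpha^{[-2]}_M$ collapses to $\alpha^{-1}_{TM}$, and then invoke naturality of $\alpha$ at $K$ in the form $T'F(K)\,\alpha^{-1}_{TM} = \alpha^{-1}_{T^2M}\,FT(K)$ to rearrange the remaining factors and recognize $(\alpha^{[3]}_M)^{-1}$ on the left. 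Next, I would verify the level-three transport identity
\[F(c_{TM})\,\alpha^{[3]}_M \;=\; \alpha^{[3]}_M\,c'_{T'FM}\]
by combining the tangent-morphism axiom at the object $TM$, namely $F(c_{TM})\,\alpha^{[2]}_{TM} = \alpha^{[2]}_{TM}\,c'_{FTM}$, with the naturality of $c'\colon {T'}^2 \Rightarrow {T'}^2$ at the morphism $\alpha_M$. With these two facts in hand, the conclusion proceeds exactly as in part (1):
\[c'_{T'FM}\,T'(K_F)\,K_F \;=\; (\alpha^{[3]}_M)^{-1}\,F(c_{TM}\,T(K)\,K)\,\alpha_M \;=\; (\alpha^{[3]}_M)^{-1}\,F(T(K)\,K)\,\alpha_M \;=\; T'(K_F)\,K_F,\]
where the flatness hypothesis $c_{TM}\,T(K)\,K = T(K)\,K$ enters at the middle equality.

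The main obstacle will be the bookkeeping in part (2): carefully threading three levels of $\alpha$ through naturality squares to derive the level-three transport identity and the expansion of $T'(K_F)\,K_F$ above. Once these identities are in place, flatness drops out in a single line just as torsion-freeness did in part (1).
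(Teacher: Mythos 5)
Your proposal is correct and follows essentially the same route as the paper's proof: part (1) is the identical one-line computation, and in part (2) your isomorphism $\alpha^{[3]}_M$ and its transport identity for $c$ merely package the paper's two inline steps (naturality of $c'$ at $\alpha_M$, then the axiom $F(c)\alpha^{[2]}=\alpha^{[2]}c'$ at the object $TM$) into named lemmas, with the expansion $T'(K_F)K_F = (\alpha^{[3]}_M)^{-1}F(T(K)K)\alpha_M$ matching the paper's ${T'}^2(\alpha^{-1}_M)\alpha^{[-2]}_{TM}F(T(K)K)\alpha_M$ exactly.
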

\begin{proof}
If $K$ is torsion-free, i.e. $c_MK = K$, then
$$c'_{FM}K_F = c'_{FM}\alpha^{[-2]}_MF(K)\alpha_M = \alpha^{[-2]}_MF(c_M)F(K)\alpha_M = \alpha^{[-2]}_MF(K)\alpha_M = K_F$$
by \ref{def:str-mor}.  Suppose that $K$ is flat, i.e. $c_{TM}T(K)K = T(K)K:T^3M \rightarrow TM$.  Then
$$
\begin{array}{lll}
T'(K_F)K_F & = & T'(\alpha^{[-2]}_M)T'F(K)T'(\alpha_M)T'(\alpha^{-1}_M)\alpha^{-1}_{TM}F(K)\alpha_M\\
                           & = & T'(\alpha^{[-2]}_M)T'F(K)\alpha^{-1}_{TM}F(K)\alpha_M\\
                           & = & T'(\alpha^{[-2]}_M)\alpha^{-1}_{T^2M}FT(K)F(K)\alpha_M\\
                           & = & T'(\alpha^{[-2]}_M)\alpha^{-1}_{T^2M}F(T(K)K)\alpha_M\\
                           & = & {T'}^2(\alpha^{-1}_M)T'(\alpha^{-1}_{TM})\alpha^{-1}_{T^2M}F(T(K)K)\alpha_M\\
                           & = & {T'}^2(\alpha^{-1}_M)\alpha^{[-2]}_{TM}F(T(K)K)\alpha_M
\end{array}
$$
by the naturality of $\alpha^{-1}$ and the definition of $\alpha^{[-2]}$.  Hence
$$
\begin{array}{lll}
c'_{T'FM}T'(K_F)K_F & = & c'_{T'FM}{T'}^2(\alpha^{-1}_M)\alpha^{[-2]}_{TM}F(T(K)K)\alpha_M\\
                                    & = & {T'}^2(\alpha^{-1}_M)c'_{FTM}\alpha^{[-2]}_{TM}F(T(K)K)\alpha_M\\
                                    & = & {T'}^2(\alpha^{-1}_M)\alpha^{[-2]}_{TM}F(c_{TM})F(T(K)K)\alpha_M\\
                                    & = & {T'}^2(\alpha^{-1}_M)\alpha^{[-2]}_{TM}F(T(K)K)\alpha_M\\
                                    & = & T'(K_F)K_F.
\end{array}
$$
by \ref{def:str-mor} and the naturality of $c'$. 
\end{proof}

\begin{proposition}\label{thm:sm-aff-func}
Every strong morphism of tangent categories $F:(\C,T) \rightarrow (\C',T')$ induces a functor
$$F_*\;:\:\Geom(\C,T) \rightarrow \Geom(\C',T')$$
given on objects by $(M,K) \mapsto (FM,K_F)$ and on morphisms by $f \mapsto F(f)$.  The analogous claims hold with each of $\GeomFlat$, $\GeomTf$, and $\Aff$ replacing $\Geom$, and in each case we shall denote the resulting functor also by $F_*$.
\end{proposition}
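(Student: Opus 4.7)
The plan is to show first that the object assignment is well-defined, then that the morphism assignment lands in the target category, and finally deduce functoriality and the restrictions to full subcategories. Well-definedness on objects is immediate: given a geometric space $(M,K)$ in $(\C,T)$, Proposition \ref{thm:sm-acx} tells us that $K_F = \alpha^{[-2]}_M F(K)\alpha_M$ is a connection on $FM$, so $(FM, K_F)$ is a geometric space in $(\C',T')$.

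The main verification is that if $f:(M,K) \rightarrow (M',K')$ is a morphism in $\Geom(\C,T)$, meaning $T^2(f)K' = KT(f)$, then $F(f):(FM,K_F) \rightarrow (FM',K'_F)$ satisfies ${T'}^2(F(f))K'_F = K_F T'(F(f))$. I expect this to reduce to a short diagram chase using just the naturality of $\alpha$ (hence of $\alpha^{[2]}$ and its inverse) together with the functoriality of $F$. Specifically, unpacking $K'_F$ and $K_F$ and using the naturality squares
$${T'}^2(F(f))\,\alpha^{[-2]}_{M'} \;=\; \alpha^{[-2]}_M\,FT^2(f),\qquad F T(f)\,\alpha_{M'} \;=\; \alpha_M\,T'(F(f)),$$
one computes
$${T'}^2(F(f))K'_F \;=\; \alpha^{[-2]}_M\, F\bigl(T^2(f)K'\bigr)\,\alpha_{M'} \;=\; \alpha^{[-2]}_M\, F\bigl(KT(f)\bigr)\,\alpha_{M'} \;=\; K_F\,T'(F(f)),$$
using the hypothesis on $f$ in the middle step. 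This shows $F(f)$ is a morphism of geometric spaces.

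Functoriality of $F_*$ (preservation of identities and composition) then follows immediately from the functoriality of $F$, since $F_*$ acts on underlying morphisms exactly as $F$ does. For the restrictions, Proposition \ref{thm:sm-tf-fl} tells us that if $K$ is flat (resp.\ torsion-free, resp.\ both) then so is $K_F$; hence $F_*$ restricts to functors $\GeomFlat(\C,T) \rightarrow \GeomFlat(\C',T')$, $\GeomTf(\C,T) \rightarrow \GeomTf(\C',T')$, and $\Aff(\C,T) \rightarrow \Aff(\C',T')$, as these subcategories are full on geometric spaces with the relevant extra properties. The only genuine content in the proof is the naturality computation above; no step appears to pose a real obstacle, since all the heavy lifting has already been done in Propositions \ref{thm:sm-acx} and \ref{thm:sm-tf-fl}.
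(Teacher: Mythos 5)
Your proof is correct and follows essentially the same route as the paper: reduce to the morphism condition via Propositions \ref{thm:sm-acx} and \ref{thm:sm-tf-fl}, then verify it by the naturality of $\alpha$ and $\alpha^{[-2]}$ together with functoriality of $F$. The paper merely states that this verification is ``immediate from the definitions,'' whereas you have written out the (correct) computation explicitly.
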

\begin{proof}
By \ref{thm:sm-acx} and \ref{thm:sm-tf-fl}, it suffices to show that if $f:(M,K) \rightarrow (M',K')$ is a morphism in $\Geom(\C,T)$, then $F(f):(FM,K_F) \rightarrow (FM',K'_F)$ is a morphism in $\Geom(\C',T')$.  But this follows immediately from the definitions, using the naturality of $\alpha$ and $\alpha^{[-2]}$.
\end{proof}

\subsection{}\label{par:2cat-tan}
Given morphisms of tangent categories $F,G:(\C,T) \rightarrow (\C',T')$, a \textbf{tangent transformation} $\phi:F \Rightarrow G$ is a natural transformation such that $\alpha^FT'(\phi) = \phi_T\alpha^G$ \cite[Def. 4.18]{CC3}.  It is straightforward to show that tangent transformations are closed under vertical composition and are closed under whiskering with morphisms of tangent categories.  Hence, in view of \ref{def:str-mor}, we obtain a 2-category $\Tan$ whose objects are tangent categories, whose 1-cells are strong morphisms, and whose 2-cells are tangent transformations.

\begin{lemma}\label{thm:ttr-mor-conn}
Let $F,G:(\C,T) \rightarrow (\C',T')$ be strong morphisms of tangent categories, and let $\phi:F \Rightarrow G$ be a tangent transformation.  Then for any connection $K$ on an object $M$ of $\C$, the component $\phi_M$ underlies a morphism
\begin{equation}\label{eq:comp-ttr-morcx}\phi_M\;:\;(FM,K_F) \longrightarrow (GM,K_G)\end{equation}
in $\Geom(\C',T')$.  Further, there is a natural transformation
$$\phi_*\;:\;F_* \Rightarrow G_*\;:\;\Geom(\C,T) \rightarrow \Geom(\C',T')$$
whose component at each object $(M,K)$ of $\Geom(\C,T)$ is the morphism \eqref{eq:comp-ttr-morcx}.
\end{lemma}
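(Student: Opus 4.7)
The plan is to verify the two claims in turn, both by direct diagram chasing. The heart of the matter is the first claim: that $\phi_M$ preserves the connections $K_F$ and $K_G$, i.e., that
\[ T'^2(\phi_M)\,K_G \;=\; K_F\,T'(\phi_M). \]
Substituting the formulas $K_F = \alpha^{F,[-2]}_M\, F(K)\, \alpha^F_M$ and $K_G = \alpha^{G,[-2]}_M\, G(K)\, \alpha^G_M$ from Proposition \ref{thm:sm-acx}, this reduces to a chase using three ingredients: (i) the tangent-transformation equation $\alpha^F_M\, T'(\phi_M) = \phi_{TM}\, \alpha^G_M$; (ii) the $T'$-image of that same equation, together with the analogous equation at $TM$, which together give $T'^2(\phi_M)\, \alpha^{G,[-2]}_M = \alpha^{F,[-2]}_M\, \phi_{T^2M}$ (here I use the definition $\alpha^{[2]} = \alpha_T \cdot T'(\alpha)$ and pass to inverses); and (iii) naturality of $\phi$ at the morphism $K\colon T^2M \rightarrow TM$, which gives $\phi_{T^2M}\, G(K) = F(K)\, \phi_{TM}$.

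Concretely, I would first rewrite $T'^2(\phi_M)\, K_G$ as $T'^2(\phi_M)\, \alpha^{G,[-2]}_M\, G(K)\, \alpha^G_M$, then use (ii) to move $\phi$ past $\alpha^{G,[-2]}_M$, arriving at $\alpha^{F,[-2]}_M\, \phi_{T^2M}\, G(K)\, \alpha^G_M$. Next, applying (iii) turns this into $\alpha^{F,[-2]}_M\, F(K)\, \phi_{TM}\, \alpha^G_M$, and finally applying (i) gives $\alpha^{F,[-2]}_M\, F(K)\, \alpha^F_M\, T'(\phi_M) = K_F\, T'(\phi_M)$, as desired. This establishes that $\phi_M$ underlies a morphism $(FM,K_F) \rightarrow (GM,K_G)$ in $\Geom(\C',T')$.

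For the second claim, I need to check that the morphisms $(\phi_*)_{(M,K)} := \phi_M$ assemble into a natural transformation $F_* \Rightarrow G_*$. Given any morphism $f\colon(M,K) \rightarrow (M',K')$ in $\Geom(\C,T)$, the naturality square
\[
\xymatrix{
(FM,K_F)\ar[r]^{F(f)}\ar[d]_{\phi_M} & (FM',K'_F)\ar[d]^{\phi_{M'}}\\
(GM,K_G)\ar[r]_{G(f)} & (GM',K'_G)
}
\]
in $\Geom(\C',T')$ commutes because its underlying square in $\C'$ is just the naturality square for $\phi$ at the morphism $f$ in $\C$. Both this naturality and the functoriality of $\phi_*$ therefore require no additional verification beyond what is already present in the definition of a tangent transformation.

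I do not expect any serious obstacle: the substance of the argument is entirely in the first claim, and its only subtlety is keeping track of the decomposition of $\alpha^{[-2]}$ and applying the tangent-transformation equation at both $M$ and $TM$. The result carries over verbatim to $\GeomFlat$, $\GeomTf$, and $\Aff$, since Proposition \ref{thm:sm-tf-fl} guarantees that the relevant subcategories are preserved by $F_*$ and $G_*$ and the morphisms and natural transformations are inherited.
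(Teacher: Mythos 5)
Your proof is correct and follows the same route as the paper: the paper simply asserts that the first claim is immediate from the tangent-transformation axiom and naturality, and your computation is exactly the direct verification it has in mind, correctly combining the equation $\alpha^F_M T'(\phi_M)=\phi_{TM}\alpha^G_M$ at $M$ and $TM$ with naturality of $\phi$ at $K$. The treatment of the second claim likewise matches the paper's (equally immediate) argument.
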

\begin{proof}
The first claim follows immediately from the naturality of $\alpha^F$ and $\alpha^G$, and the second is immediate.
\end{proof}

\begin{theorem}\label{thm:geom-2func-to-cat}
There are 2-functors
$$\Geom,\;\GeomFlat,\;\GeomTf,\;\Aff\;:\;\Tan \rightarrow \Cat$$
from the 2-category $\Tan$ of tangent categories (\ref{par:2cat-tan}) to the 2-category $\Cat$ of categories, sending each tangent category $(\C,T)$ to $\Geom(\C,T)$, $\GeomFlat(\C,T)$, $\GeomTf(\C,T)$, and $\Aff(\C,T)$, respectively.  These 2-functors are given on 1-cells by \ref{thm:sm-aff-func} and on 2-cells by \ref{thm:ttr-mor-conn}.
\end{theorem}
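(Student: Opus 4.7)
The plan is to verify the four 2-functor axioms for $\Geom\colon\Tan \to \Cat$; the remaining cases ($\GeomFlat$, $\GeomTf$, $\Aff$) will then follow by restriction, since Propositions \ref{thm:sm-tf-fl} and \ref{thm:sm-aff-func} guarantee that the operations $F_*$ and $\phi_*$ send flat, torsion-free, and affine geometric spaces to ones of the same kind, and these form \emph{full} subcategories of $\Geom(\C,T)$. Since the functoriality of each $F_*$ is already established in \ref{thm:sm-aff-func} and the naturality of each $\phi_*$ in \ref{thm:ttr-mor-conn}, what remains is to check that the assignments on 1-cells and 2-cells respect identities, vertical composition of 2-cells, and composition of 1-cells.

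The easy checks come first. For the identity 1-cell on $(\C,T)$ the transformation $\alpha$ is an identity, so $K_F = K$ directly and $(1_{(\C,T)})_* = 1_{\Geom(\C,T)}$. For the identity 2-cell on a strong morphism $F$, the induced components of $\phi_*$ are identities in $\C'$, hence identities in $\Geom(\C',T')$. Vertical composition of 2-cells is preserved at once, because the components of $\phi_*$ coincide with those of $\phi$, merely viewed as morphisms of geometric spaces via \ref{thm:ttr-mor-conn}.

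The main verification is preservation of composition of 1-cells. Given composable strong tangent morphisms $(F,\alpha^F)\colon(\C,T)\rightarrow(\C',T')$ and $(G,\alpha^G)\colon(\C',T')\rightarrow(\C'',T'')$, the composite carries $\alpha^{GF}_M = G(\alpha^F_M)\,\alpha^G_{FM}$, and I need to show $(GF)_* = G_*F_*$. Expanding both sides with Proposition \ref{thm:sm-acx}, this reduces to the identity $K_{GF} = (K_F)_G$ for each connection $K$ on an object $M$, which in turn follows from
$$\alpha^{GF,[-2]}_M = \alpha^{G,[-2]}_{FM} \cdot G(\alpha^{F,[-2]}_M).$$
By taking inverses this is equivalent to $\alpha^{GF,[2]}_M = G(\alpha^{F,[2]}_M)\,\alpha^{G,[2]}_{FM}$, which I would prove by unfolding the definition $\alpha^{[2]} := \alpha_T \cdot T'(\alpha)$ from Definition \ref{def:str-mor} and applying the naturality of $\alpha^G\colon GT' \Rightarrow T''G$ to the morphism $\alpha^F_M\colon FTM \to T'FM$. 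A short substitution then yields
$$K_{GF} = \alpha^{G,[-2]}_{FM}\, G\bigl(\alpha^{F,[-2]}_M\, F(K)\, \alpha^F_M\bigr)\, \alpha^G_{FM} = \alpha^{G,[-2]}_{FM}\, G(K_F)\, \alpha^G_{FM} = (K_F)_G,$$
as required. Preservation of the left and right whiskerings of a 2-cell $\phi$ by a 1-cell follows by the same naturality input, together with the composition identity just established.

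The hard part is the bookkeeping in this last step, namely the identity $\alpha^{GF,[2]}_M = G(\alpha^{F,[2]}_M)\,\alpha^{G,[2]}_{FM}$. Everything else is routine substitution, and no additional facts about horizontal connections or flatness/torsion-freeness are needed beyond those already secured in Propositions \ref{thm:sm-acx}, \ref{thm:sm-tf-fl}, and \ref{thm:sm-aff-func}.
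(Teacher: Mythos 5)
Your proposal is correct and follows essentially the same route as the paper, which simply asserts that 1-cell functoriality, vertical composition, and whiskering are verified "by employing the definitions, as well as the middle-interchange law for $\Cat$"; your explicit identity $\alpha^{GF,[2]}_M = G(\alpha^{F,[2]}_M)\,\alpha^{G,[2]}_{FM}$ (a consequence of naturality of $\alpha^G$, i.e.\ of interchange in $\Cat$) is exactly the computation that reduction hides, and your handling of the full subcategories matches \ref{thm:sm-aff-func}.
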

\begin{proof}
By employing the definitions, as well as the middle-interchange law for $\Cat$, it is straightforward to verify the needed functoriality on 1-cells.  Functoriality with respect to vertical composition of 2-cells is immediate, as is the preservation of whiskering by $\Geom$ (and hence by the others).
\end{proof}

We now apply this theorem in order to show that $\Geom(\C,T)$ and $\Aff(\C,T)$ are tangent categories, by way of the following general lemma.

\begin{lemma}\label{thm:tstr-sm-tt}
Let $(\C,T,+,0,\ell,c)$ be a tangent category.
\begin{enumerate}
\item \textnormal{\cite{CC3}} $(T,c):(\C,T) \rightarrow (\C,T)$ is a strong morphism of tangent categories. 
\item \textnormal{\cite{CC3}} $(T_n,c_n):(\C,T) \rightarrow (\C,T)$ is a strong morphism of tangent categories for each natural number $n$, where $c_n:T_nT \rightarrow TT_n$ is the unique morphism such that $c_nT(\pi_i) = \pi_ic$ for each $i = 0,...,n-1$ when we write $\pi_i:T_n \rightarrow T$ to denote the projection. 
\item The following are tangent transformations
$$p:(T,c) \Longrightarrow (1,1_T),\;\;\;\;+:(T_2,c_2) \Longrightarrow (T,c),\;\;\;\;0:(1,1_T) \Longrightarrow (T,c),$$
$$\ell:(T,c) \Longrightarrow (T,c)^2,\;\;\;\;c:(T,c)^2 \Longrightarrow (T,c)^2,\;\;\;\;\pi_i:(T_n,c_n) \Longrightarrow (T,c)$$
for all natural numbers $n,i$ with $i < n$, where $(T,c)^2 = (T,c) \circ (T,c) = (T^2, c * c)$ is the composite 1-cell in $\Tan$, where $c * c = T(c)c_T:T^3 \rightarrow T^3$ (\ref{def:str-mor}).
\end{enumerate}
\end{lemma}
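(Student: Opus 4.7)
The plan is as follows. Parts (1) and (2) are established in \cite{CC3} and will be cited directly; for part (2), the characterizing equations $c_n T(\pi_i) = \pi_i c$ of $c_n$ recalled in the lemma's statement will be needed below. For part (3), we need to verify the tangent transformation condition $\alpha^F T'(\phi) = \phi_T \alpha^G$ of \ref{par:2cat-tan} for each of the six listed natural transformations, with source and target tangent morphisms read off from the statement.

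The unifying observation is that, once one applies the composition rule of \ref{def:str-mor} to identify $(T,c)^2$ with $(T^2, T(c)c_T)$ and keeps careful track of source and target structural transformations, the condition in each case reduces to one of the standard coherence axioms of a tangent category in the sense of \cite{CC1, CC3}. Explicitly, the condition for $p:(T,c) \Rightarrow (1,1_T)$ reduces to $c T(p) = p_T$; for $+:(T_2,c_2) \Rightarrow (T,c)$ to $c_2 T(+) = +_T c$; for $0:(1,1_T) \Rightarrow (T,c)$ to $T(0) = 0_T c$; for $\ell:(T,c) \Rightarrow (T,c)^2$ (whose target structural transformation is $T(c)c_T$) to $c T(\ell) = \ell_T T(c) c_T$; and for $\pi_i:(T_n,c_n) \Rightarrow (T,c)$ to $c_n T(\pi_i) = \pi_i c$, which holds by the very definition of $c_n$ appealed to in part (2). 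The case of $c:(T,c)^2 \Rightarrow (T,c)^2$, where both structural transformations equal $T(c)c_T$, reduces to the hexagon identity $T(c) c_T T(c) = c_T T(c) c_T$ for the tangent flip, which is again one of the standard axioms.

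The main obstacle is purely clerical: correctly parsing the tangent transformation condition in each case, particularly when the source or target is $(T,c)^2$ so that the corresponding $\alpha$ is obtained via the composition rule for morphisms of tangent categories, and then recognizing each resulting equation among the axioms. No serious computation is needed beyond this bookkeeping. After the six verifications are assembled, parts (1)--(3) together give the complete statement.
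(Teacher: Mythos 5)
Your proposal is correct and follows essentially the same route as the paper: parts (1) and (2) are cited from the literature, and each tangent-transformation condition in part (3) is unwound into exactly the equations you list ($cT(p)=p_T$, $T(0)=0_Tc$, $cT(\ell)=\ell_TT(c)c_T$, $c_TT(c)c_T=T(c)c_TT(c)$, the defining property of $c_n$ for $\pi_i$, and $c_2T(+)=+_Tc$), each of which the paper likewise identifies with a tangent category axiom. The only nuance is that for $+$ the paper spells out that $c_2T(+)=+_Tc$ is not an axiom verbatim but follows from the axiom that $(c,1_{TM})$ is an additive bundle morphism together with the definition of $c_2$; your "reduces to one of the standard coherence axioms" glosses this harmlessly.
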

\begin{proof}
It suffices to prove 3.  Firstly, $p,0,\ell,c$ are tangent transformations since $cT(p) = p_T = p_T1_T$, $1_TT(0) = T(0) = 0_Tc$, $\ell_T(c * c) = \ell_TT(c)c_T = cT(\ell)$, and $c_T(c * c) = c_TT(c)c_T = T(c)c_TT(c) = (c * c)T(c)$, by the axioms for a tangent category.  The definition of $c_n$ immediately entails that each $\pi_i$ is a tangent transformation.  With regard to $+$, one of the axioms for a tangent category entails that $(c,1_{TM}):{\sf p}_{TM} \rightarrow T({\sf p}_M)$ is an additive bundle morphism, so by the definition of $c_2$ we deduce that
$$
\xymatrix{
T_2TM \ar[d]_{+_{TM}} \ar[r]^{(c_2)_M} & TT_2M \ar[d]^{T(+_M)}\\
TTM \ar[r]_{c_M}                       & TTM
}
$$
commutes.
\end{proof}

\begin{corollary}\label{thm:strmr-aff-conn}
Given a tangent category $(\C,T)$, we can apply the 2-functor $\Aff:\Tan \rightarrow \Cat$ to the 1-cells $T,T_n:(\C,T) \rightarrow (\C,T)$ and 2-cells $p,+,0,\ell,c$ in $\Tan$ in order to obtain functors
$$T_*,\;(T_n)_*\;:\;\Aff(\C,T) \rightarrow \Aff(\C,T)$$
and natural transformations
$$p_*:T_* \Longrightarrow 1,\;\;\;\;+_*:(T_2)_* \Longrightarrow T_*,\;\;\;\;0_*:1 \Longrightarrow T_*$$
$$\ell_*:T_* \Longrightarrow T_*^2,\;\;\;\;c_*:T_*^2 \Longrightarrow T_*^2,\;\;\;\;(\pi_i)_*:(T_n)_* \Longrightarrow T_*$$
for all natural numbers $n,i$ with $i < n$.  We can similarly apply $\Geom,\GeomFlat,\GeomTf$ to the same data in order to obtain endofunctors and natural transformations, for which we employ the same notations.
\end{corollary}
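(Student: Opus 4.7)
My plan is that this corollary is essentially an immediate application of the 2-functoriality established in Theorem \ref{thm:geom-2func-to-cat} to the 1-cells and 2-cells in $\Tan$ exhibited in Lemma \ref{thm:tstr-sm-tt}. So the proof itself is mostly a bookkeeping exercise; the substantive work has already been done.

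The first step is to observe that by Lemma \ref{thm:tstr-sm-tt}(1,2), both $(T,c)$ and $(T_n,c_n)$ are 1-cells in $\Tan$ from $(\C,T)$ to itself, and by Lemma \ref{thm:tstr-sm-tt}(3), the transformations $p,+,0,\ell,c,\pi_i$ are 2-cells in $\Tan$ between the appropriate 1-cells (with source/target 1-cells being either identities, $(T,c)$, $(T,c)^2$, or $(T_n,c_n)$). Applying the 2-functor $\Aff:\Tan \rightarrow \Cat$ from Theorem \ref{thm:geom-2func-to-cat} to these 1-cells yields the stated endofunctors $T_*,(T_n)_*$ of $\Aff(\C,T)$, and applying it to the 2-cells yields natural transformations between the images.

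The only point that requires any checking is that the domains and codomains of the resulting natural transformations are as stated. For this I would invoke the functoriality of $\Aff$ on 1-cells, which says that $\Aff$ preserves composition of strong morphisms. In particular, $\Aff((T,c)^2) = \Aff((T,c)) \circ \Aff((T,c)) = T_* \circ T_* = T_*^2$, so the image of a 2-cell with target $(T,c)^2$ (such as $\ell$ or $c$) lands in a natural transformation with target $T_*^2$, as claimed. Likewise $\Aff((T_n,c_n)) = (T_n)_*$ and $\Aff((T_2,c_2)) = (T_2)_*$, giving the correct source for $+_*$ and $(\pi_i)_*$. The identity 1-cell on $(\C,T)$ is sent to the identity functor on $\Aff(\C,T)$, which matches the target of $p_*$ and source of $0_*$.

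Finally, since Theorem \ref{thm:geom-2func-to-cat} provides not just $\Aff$ but also the 2-functors $\Geom$, $\GeomFlat$, and $\GeomTf$ with the very same action on 1-cells and 2-cells of $\Tan$, the identical argument yields the analogous functors and natural transformations for each of these, justifying the convention of using the same subscript-star notation throughout. I do not anticipate any real obstacle; the entire proof reduces to citing Lemma \ref{thm:tstr-sm-tt} and Theorem \ref{thm:geom-2func-to-cat} and noting the 2-functoriality.
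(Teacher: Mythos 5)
Your proposal matches the paper's own (implicit) argument exactly: the corollary is obtained by applying the 2-functor $\Aff:\Tan \rightarrow \Cat$ of Theorem \ref{thm:geom-2func-to-cat} to the 1-cells and 2-cells exhibited in Lemma \ref{thm:tstr-sm-tt}, with the source/target bookkeeping handled by 2-functoriality (preservation of composites and identities of 1-cells). This is correct and is essentially the same route the paper takes.
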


In order to show that \ref{thm:strmr-aff-conn} yields a tangent structure on $\Aff(\C,T)$, we shall need certain finite limits in the latter category.  To this end we shall employ the following:

\begin{lemma}\label{thm:refl-lim-cx}
Let $D:\J \rightarrow \Geom(\C,T)$ be a functor, and let $\pi = (\pi_j:L \rightarrow Dj)_{j \in \J}$ be a cone on $D$.  Writing $U:\Geom(\C,T) \rightarrow \C$ for the forgetful functor, suppose that $\pi$ is sent by $U$ to a limit cone for $UD$ that is preserved by $T^k$ for each natural number $k$.  Then
\begin{enumerate}[(i)] 
	\item $\pi$ is a limit cone for $D$, 
	\item this limit is preserved by each of the endofunctors $T^k_*$ on $\Geom(\C,T)$.
\end{enumerate}
\end{lemma}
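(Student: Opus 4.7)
The plan is to use that $U:\Geom(\C,T)\to\C$ is faithful and that $U\pi$ is a $T^k$-preserved limit, in order to lift unique factorizations from $\C$ to $\Geom(\C,T)$.

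For part (i), let $\sigma=(\sigma_j:(M,K_M)\to Dj)_{j\in\J}$ be another cone on $D$ in $\Geom(\C,T)$. The cone $U\sigma$ factors uniquely through $U\pi$ via some $f:M\to L$ in $\C$ with $f\,\pi_j = \sigma_j$, and this $f$ is also the unique candidate for a factorization in $\Geom(\C,T)$, since $U$ is faithful. It thus suffices to show that $f$ underlies a morphism of geometric spaces, i.e.\ that $T^2(f)K_L = K_M T(f)$. I would postcompose both sides with $T(\pi_j)$ and use that each $\pi_j$ and $\sigma_j$ is a morphism of geometric spaces:
\[ T^2(f)K_L T(\pi_j) \;=\; T^2(f)T^2(\pi_j)K_{Dj} \;=\; T^2(\sigma_j)K_{Dj} \;=\; K_M T(\sigma_j) \;=\; K_M T(f)T(\pi_j). \]
By hypothesis, $T$ preserves the limit $U\pi$, so $(T(\pi_j))_{j\in\J}$ is a limit cone on $TUD$; in particular it is jointly monic, yielding $T^2(f)K_L = K_M T(f)$ as required.

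For part (ii), the construction of $T^k_*$ via \ref{thm:strmr-aff-conn} (applying $\Geom$ to the $k$-fold composite of $(T,c)$ in $\Tan$) satisfies $UT^k_*=T^kU$, so $U(T^k_*\pi)=T^k(U\pi)$ is a cone on $T^k(UD)=U(T^k_*D)$. By hypothesis, $T^k(U\pi)$ is a limit cone, and it is preserved by each $T^j$ (since $T^j T^k(U\pi) = T^{j+k}(U\pi)$ is a limit by hypothesis). Applying part (i) to the diagram $T^k_*D$ and the cone $T^k_*\pi$ therefore shows that $T^k_*\pi$ is a limit cone in $\Geom(\C,T)$; hence $T^k_*$ preserves the limit $\pi$.

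The only nontrivial step is the connection-preservation calculation in part (i), whose core ingredient is the joint monicity of the family $(T(\pi_j))_j$ (a consequence of $T$-preservation of $U\pi$); everything else is standard bookkeeping about forgetful functors, limits, and 2-functoriality.
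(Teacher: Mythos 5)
Your proof is correct and follows essentially the same route as the paper's: both obtain the unique factorization $f$ at the level of $\C$, verify that $f$ preserves the connections by postcomposing with the jointly monic family $(T(\pi_j))_j$ coming from $T$-preservation of the limit $U\pi$, and then deduce (ii) by applying (i) to the cone $T^k_*\pi$. No issues.
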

\begin{proof}
Let us write $Dj = (UDj,K_j)$ for each object $j$ of $\J$, and write $L = (L_0,L_1)$.  Given any cone $(f_j:(M,K) \rightarrow Dj)_{j \in \J}$ on $D$, we know that $(f_j:M \rightarrow UDj)_{j \in \J}$ is a cone on $UD$ and hence induces a morphism $f:M \rightarrow L_0$ in $\C$.  For each $j \in \ob\J$ we compute that
$$KT(f)T(\pi_j) = KT(f_j) = T^2(f_j)K_j = T^2(f)T^2(\pi_j)K_j = T^2(f)L_1T(\pi_j)$$
since $f_j$ and $\pi_j$ are morphisms in $\Geom(\C,T)$, so since $(T(\pi_j):TL_0 \rightarrow TUDj)_{j \in \J}$ is a limit cone in $\C$ we deduce that $KT(f) = T^2(f)L_1$.  Hence $f:(M,K) \rightarrow L$ is a morphism in $\Geom(\C,T)$.  Thus (i) is proved.

For each natural number $k$, we know that $T_*^k(\pi) = (T_*^k(\pi_j))_{j \in \J}$ is a cone on the diagram $T_*^kD$ and is sent by $U$ to a limit cone $(T^k(\pi_j):T^kL_0 \rightarrow T^kUDj)_{j \in \J}$ for the diagram $UT_*^kD = T^kUD:\J \rightarrow \C$.  Further, the latter limit is preserved by $T^{k'}$ for each natural number $k'$, so we can apply (i) to the cone $T_*^k(\pi)$ in order to deduce that $T_*^k(\pi)$ is a limit cone for $T_*^kD$.
\end{proof}

The preceding lemma immediately entails the following:

\begin{lemma}\label{thm:fps-in-aff}
Let $(\C,T)$ be a tangent category.  Then for each natural number $n$ and each object $M$ of $\Aff(\C,T)$, the morphisms ${(\pi_i)_*}_M:(T_n)_*M \rightarrow T_*M$ present $(T_n)_*M$ as an $n$-th fibre power of ${p_*}_M:T_*M \rightarrow M$ in $\Aff(\C,T)$, and this fibre power is preserved by $T_*^k:\Aff(\C,T) \rightarrow \Aff(\C,T)$ for each natural number $k$.  The analogous claims hold with each of $\Geom,\GeomFlat,\GeomTf$ in place of $\Aff$.
\end{lemma}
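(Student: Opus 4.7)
The plan is to invoke Lemma \ref{thm:refl-lim-cx} directly. Let $D$ denote the standard diagram in $\Geom(\C,T)$ whose limit computes an $n$-fold fibre power of ${p_*}_M\colon T_*M \to M$; the morphisms ${(\pi_i)_*}_M\colon (T_n)_*M \rightarrow T_*M$ then assemble into a cone $\pi$ on $D$. Under the forgetful functor $U\colon\Geom(\C,T)\rightarrow\C$, the cone $\pi$ is sent to the cone with components $(\pi_i)_M\colon T_nM\rightarrow TM$, which by the tangent category axioms for $(\C,T)$ is a limit cone presenting $T_nM$ as the $n$-th fibre power of $p_M$; moreover, one of those axioms states exactly that this fibre power is preserved by each iterated tangent functor $T^k$.

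With these hypotheses verified, Lemma \ref{thm:refl-lim-cx} immediately gives both that $\pi$ is a limit cone in $\Geom(\C,T)$ and that this limit is preserved by each $T_*^k$, which is precisely the claim for $\Geom$.

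To extend the result to the full subcategories $\Aff(\C,T)$, $\GeomFlat(\C,T)$, and $\GeomTf(\C,T)$, I would invoke Corollary \ref{thm:strmr-aff-conn}: since $T_*$ and $(T_n)_*$ restrict to endofunctors on each of these subcategories, both the apex and the vertices of $\pi$ lie in the subcategory whenever $M$ does. Because each subcategory is full in $\Geom(\C,T)$, any cone on $D$ originating from an object of the subcategory is already a cone in $\Geom(\C,T)$ and therefore factors uniquely through $\pi$; the unique factoring morphism is a morphism in the subcategory by fullness, so $\pi$ is a limit cone in the subcategory. Preservation by $T_*^k$ transfers by the same fullness argument.

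There is no substantive obstacle here, so the care required is entirely organisational: one must identify $D$ correctly and verify that $U$ sends $\pi$ to the tangent-categorical fibre product diagram in $\C$ (both of which are immediate from the explicit construction of $T_*$ and $(T_n)_*$ via the 2-functor \Geom in \ref{thm:geom-2func-to-cat}), and one must note that Corollary \ref{thm:strmr-aff-conn} supplies not merely the existence of the lifted endofunctors but also their compatibility with the forgetful functors so that the subcategorical versions of the cone $\pi$ agree on the nose with $\pi$ itself.
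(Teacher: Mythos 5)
Your proposal is correct and follows exactly the route the paper intends: the paper gives no explicit proof, stating only that Lemma \ref{thm:refl-lim-cx} ``immediately entails'' the result, and your argument supplies precisely the omitted details (verifying the hypotheses of \ref{thm:refl-lim-cx} via the tangent-category axioms for $(\C,T)$, and transferring the conclusion to the full subcategories by fullness).
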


\begin{theorem}\label{thm:aff-conn-tcats}
Let $(\C,T)$ be a tangent category.  Then each of the categories $\Geom(\C,T)$, $\GeomFlat(\C,T)$, $\GeomTf(\C,T)$, and $\Aff(\C,T)$ is a tangent category when equipped with its endofunctor $T_*$ and natural transformations $p_*,0_*,+_*,\ell_*,c_*$ as defined in \ref{thm:strmr-aff-conn}.
\end{theorem}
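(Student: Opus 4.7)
The plan is to observe that all of the tangent category axioms for $(\Geom(\C,T), T_*, p_*, 0_*, +_*, \ell_*, c_*)$ lift from those for $(\C,T,p,0,+,\ell,c)$ via the forgetful functor $U:\Geom(\C,T) \rightarrow \C$. This functor is faithful, indeed injective on morphisms, by the description of morphisms in Definition \ref{def:geo}, and it strictly commutes with the entire putative tangent structure: one has $UT_* = TU$ and $U(T_n)_* = T_nU$, while each component of $p_*, 0_*, +_*, \ell_*, c_*, (\pi_i)_*$ at $(M,K)$ is, on underlying $\C$-morphisms, exactly the corresponding component of $p, 0, +, \ell, c, \pi_i$ at $M$.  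All of this is immediate from Corollary \ref{thm:strmr-aff-conn}, which obtains the structure by applying the 2-functor $\Geom$ of Theorem \ref{thm:geom-2func-to-cat} to the tangent data assembled in Lemma \ref{thm:tstr-sm-tt}.

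Given this, every purely equational axiom of a tangent category (naturality; the commutative monoid laws for $(p_*,+_*,0_*)$; involutivity and the identities $cT(p) = p_T$; and the coherences $\ell T(\ell) = \ell \ell_T$, $\ell c = \ell$, $\ell T(c) = cT(\ell)$, and $cT(c)c = T(c)cT(c)$) reduces by faithfulness of $U$ to the corresponding equation in $(\C,T)$, which holds by hypothesis. The axiom that the fibre powers $(T_n)_*M$ of $T_*M$ over $M$ exist and are preserved by each $T_*^k$ is supplied directly by Lemma \ref{thm:fps-in-aff}.

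The only axiom requiring a genuine pullback argument, and hence the main obstacle, is the universality of the vertical lift, which asserts that the induced morphism $\mu_*$ in $\Geom(\C,T)$ exhibits a certain square over $0_*$ and $T_*(p_*)$ as a pullback. Here I would invoke Lemma \ref{thm:refl-lim-cx}: the image of this commutative square under $U$ is precisely the vertical lift square for $M$ in $\C$, which by the tangent category axioms for $(\C,T)$ is a pullback preserved by every $T^k$; the lemma then reflects this pullback up to $\Geom(\C,T)$ and automatically guarantees its preservation by each $T_*^k$.  Thus the pullback obstacle dissolves once Lemma \ref{thm:refl-lim-cx} is in hand.

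Finally, each of $\GeomFlat(\C,T)$, $\GeomTf(\C,T)$, and $\Aff(\C,T)$ is a full subcategory of $\Geom(\C,T)$, and by Proposition \ref{thm:sm-tf-fl} applied to the strong morphisms $(T,c)$ and $(T_n,c_n)$ of Lemma \ref{thm:tstr-sm-tt}, the functors $T_*$ and each $(T_n)_*$ restrict to endofunctors of each subcategory. Consequently the tangent structure, together with its fibre powers and the universal vertical lift pullback, transports from $\Geom(\C,T)$ to each of the three subcategories with no additional work.
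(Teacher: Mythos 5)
Your proposal is correct and follows essentially the same path as the paper's proof: the structure comes from Corollary \ref{thm:strmr-aff-conn} and Lemma \ref{thm:fps-in-aff}, the universality of the vertical lift is reflected from $\C$ via Lemma \ref{thm:refl-lim-cx}, and the three subcategories inherit everything from $\Geom(\C,T)$. The only (inessential) divergence is that you verify the equational axioms by faithfulness of the forgetful functor $U$, whereas the paper deduces them from the 2-functoriality of $\Geom$, $\GeomFlat$, $\GeomTf$, and $\Aff:\Tan\rightarrow\Cat$ applied to the identities among the 2-cells of Lemma \ref{thm:tstr-sm-tt}; both arguments are valid and amount to the same observation.
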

\begin{proof}
All of the needed structure is furnished by \ref{thm:strmr-aff-conn} and \ref{thm:fps-in-aff}.  This structure satisfies the equational axioms for a tangent category, by the 2-functoriality of $\Geom$, $\GeomFlat$, $\GeomTf$, and $\Aff:\Tan \rightarrow \Cat$, so it remains only to verify the \textit{universality of the vertical lift} \cite[Def. 2.1]{CC3}.  It suffices to treat the case of $\Geom(\C,T)$, from which the needed property of each of the other categories then follows.  For each object $M$ of the category $\D = \Geom(\C,T)$, we must show that a particular commutative square $S$ in $\D$ is a pullback that is preserved by each $T_*^n$ \cite[Def. 2.1]{CC3}, where $S$ is defined in terms of the (candidate) tangent structure on $\D$.  But the square $S$ is sent by the forgetful functor $U:\D \rightarrow \C$ to the similarly defined square in $\C$, which we know is a pullback in $\C$ that is preserved by each $T^n$.  Hence by \ref{thm:refl-lim-cx} we deduce that $S$ is a pullback square in $\D$ that is preserved by each $T^n_*$.
\end{proof}

Hence Theorem \ref{thm:aff-tcat} is proved.

\begin{remark}\label{rem:hcx-for-tb}
Given an object $(M,K)$ of $\Geom(\C,T)$, recall that $T_*(M,K) = (TM,K_T)$ (\ref{thm:sm-aff-func}).  By \ref{thm:sm-acx} we obtain the following explicit formula for the connection $K_T:T^3M \rightarrow T^2M$ on $TM$:
$$K_T = c^{[-2]}_MT(K)c_M = (c_{TM}T(c_M))^{-1}T(K)c_M = T(c_M)c_{TM}T(K)c_M.$$
For brevity, we will often write this formula as 
	\[ K_T = T(c)cT(K)c. \]
	
Letting $H$ be the horizontal connection induced by $K$, we deduce by \ref{thm:sm-hcx} that the associated horizontal connection $H_T:T_2TM  \rightarrow T^3M$ induced by $K_T$ is
$$H_T = (c_M \times c_M)T(H)c^{[2]}_M = (c_M \times c_M)T(H)c_{TM}T(c_M)$$
where $c_M \times c_M:T_2TM = T^2M \times_{TM} T^2M \rightarrow T^2M \times_{TM} T^2M = TT_2M$ is induced by $c_M$ on each factor.  For brevity, we write this formula also as 
$$H_T = (c\times c)T(H)cT(c).$$
\end{remark}

The following result will be useful when working with the map $K_T$:

\begin{lemma}\label{lemmaKT}
If $(M,K) \in \Geom(\C,T)$, then:
\begin{enumerate}[(i)]
	\item $K_TT(p) = T^2(p)K$;
	\item $T(\ell)K_T = K \ell$.
\end{enumerate}
\end{lemma}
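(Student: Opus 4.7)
The plan is to expand $K_T = T(c)cT(K)c$ in each equation and then invoke the standard tangent-category axioms together with \ref{prop:propK}. Both parts reduce to repeated application of functoriality of $T$, naturality of $p$, and the interaction axioms relating $c$ to $p$ and $\ell$.

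For part (i), I will compute $K_T T(p) = T(c)cT(K)cT(p)$ and collapse the rightmost pair $cT(p)$ to $p_T$ using the tangent-category axiom $cp_T = T(p)$ combined with the involution $c^2 = 1$. Next, the naturality of $p$ applied to $K$ converts $T(K)p_{TM}$ into $p_{T^2M}K$, which pushes $K$ all the way to the right. A second application of the same interaction axiom, this time at the object $TM$, rewrites $c_{TM}p_{T^2M}$ as $T(p_{TM})$, after which functoriality of $T$ absorbs the remaining outer $T(c)$ to give $T(c_M \cdot p_{TM})K = T(T(p_M))K = T^2(p)K$, as desired.

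For part (ii), I will compute $T(\ell)K_T = T(\ell)T(c)cT(K)c$ and use the tangent-category axiom $\ell c = \ell$ together with functoriality of $T$ to simplify $T(\ell)T(c) = T(\ell c) = T(\ell)$. The remaining expression $T(\ell) c_{TM} T(K) c_M$ then factors as $(T(\ell) c_{TM} T(K)) \cdot c_M$, and the parenthesised factor equals $K\ell$ by the second formula in part (c) of \ref{prop:propK}. A final invocation of $\ell c = \ell$, giving $K\ell \cdot c_M = K(\ell_M c_M) = K\ell$, then completes the calculation.

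The arguments themselves are mechanical once the correct axioms have been identified; the main obstacle is the bookkeeping needed to ensure that each occurrence of $c$, $p$, or $\ell$ is whiskered at the appropriate object (either $M$ or $TM$) so that the intended natural-transformation axiom applies, and that the diagrammatic composition order is tracked correctly when invoking $\ell c = \ell$ twice in part (ii).
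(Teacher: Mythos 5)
Your proposal is correct, and part (ii) is essentially identical to the paper's computation: both expand $K_T = T(c)cT(K)c$, use $\ell c = \ell$ to absorb $T(c)$ into $T(\ell)$, apply the second linearity identity $K\ell = T(\ell)cT(K)$ from Proposition \ref{prop:propK}(c), and finish with $\ell c = \ell$ once more.

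For part (i), however, you take a genuinely different route. The paper does not compute at all: it observes that (i) is precisely the assertion that $p_M$ is a morphism $(TM,K_T)\rarr(M,K)$ in $\Geom(\C,T)$, which is already known because $p$ is a tangent transformation and hence lifts to a natural transformation $p_*:T_*\Rightarrow 1$ on $\Geom(\C,T)$ (via \ref{thm:tstr-sm-tt}, \ref{thm:strmr-aff-conn}, and \ref{thm:aff-conn-tcats}). Your argument instead unwinds the definition of $K_T$ directly: the chain $c_MT(p_M)=p_{TM}$, naturality of $p$ against $K$, the same flip--projection axiom whiskered at $TM$, and functoriality of $T$ is a valid and correctly whiskered derivation of $K_TT(p)=T^2(p)K$. (Your starting identity $cp_T = T(p)$ is the involution-conjugate of the axiom $cT(p)=p_T$ as stated in \ref{thm:tstr-sm-tt}, so the two are interchangeable.) What the paper's route buys is economy: the work was already done once, abstractly, in establishing the 2-functor $\Geom$ and the lifted transformation $p_*$. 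What your route buys is self-containedness: a reader can verify (i) without tracing back through the lifting machinery, and your calculation is in effect what that machinery specializes to in this instance. Both are sound.
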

\begin{proof}
(i) asserts precisely that $p_M:T_*(M,K) = (TM,K_T) \rightarrow (M,K)$ is a morphism in $\Geom(\C,T)$, but this is immediate from \ref{thm:strmr-aff-conn}/\ref{thm:aff-conn-tcats} since we have a natural transformation $p_*:T_* \Rightarrow 1:\Geom(\C,T) \rightarrow \Geom(\C,T)$ with components ${p_*}_{(M,K)} = p_M$.  For (ii) we compute that
\begin{eqnarray*}
&   & T(\ell)K_T \\
& = & T(\ell)T(c)cT(K)c \\
& = & T(\ell c) c T(K)c \\
& = & T(\ell) c T(K) c \\
& = & K \ell c \mbox{ (by proposition \ref{prop:propK}.c)} \\
& = & K \ell
\end{eqnarray*}
as required.
\end{proof}


\subsection{Alternate characterizations of flat torsion-free connections} The fact that a connection on an object $M$ can be lifted to a connection on $TM$ (and then on $T^2M$, etc.) leads to two alternate characterizations of when a connection is flat and torsion-free.  One of these characterizations ((iii) in the result below) effectively says that ``a connection is flat torsion-free if and only if it is connection-preserving''.  If we think of a connection $K: T^2M \to TM$ as a `multiplication', the second characterization says that this operation is `associative'.  These characterizations appear to be new in standard differential geometry.

\begin{theorem}\label{propFTFConditions}
Suppose that $(M,K) \in \Geom(\C,T)$.  Then the following are equivalent:
\begin{enumerate}[(i)]
	\item $K$ is flat and torsion-free.
	\item $K_T K = T(K)K$.
	\item $K$ is a morphism in $\Geom(\C,T)$ from $(T^2M,K_{T^2})$ to $(TM,K_T)$.
\end{enumerate}
\end{theorem}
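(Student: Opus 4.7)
The plan is to prove the three conditions equivalent via a cycle (i) $\Rightarrow$ (ii) $\Rightarrow$ (iii) $\Rightarrow$ (i).

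For (i) $\Rightarrow$ (ii), I would carry out a direct algebraic computation using the explicit formula $K_T = T(c) c T(K) c$ of Remark \ref{rem:hcx-for-tb}. Applying in sequence the torsion-free condition $cK = K$, then flatness $cT(K)K = T(K)K$, then functoriality together with torsion-freeness, yields the chain $K_T K = T(c) c T(K) c K = T(c) c T(K) K = T(c) T(K) K = T(cK) K = T(K) K$, as required.

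For (ii) $\Rightarrow$ (iii), I would appeal to the fibre-product presentation of $T^2M$ from Corollary \ref{thm:acx}, according to which $T^2M$ is a third fibre power of $p_M$ with projections $T(p_M), p_{TM}, K:T^2M \rightarrow TM$. The equation in (iii), which is an equality of two morphisms $T^4M \rightarrow T^2M$, then reduces to three equalities of morphisms $T^4M \rightarrow TM$ obtained by post-composing with each of these three projections. The composites with $T(p_M)$ and $p_{TM}$ reduce to tautological identities using Lemma \ref{lemmaKT}(i) (applied to both $K$ and its lift $K_T$) together with Proposition \ref{prop:propK}(b) and the naturality of $p$. The composite with $K$ is the delicate one; expanding $K_{T^2}$ via the formula from Remark \ref{rem:hcx-for-tb}, applying $T$ to (ii) to obtain $T(K_T)T(K) = T^2(K)T(K)$, and using Lemma \ref{lemmaKT}(ii) to simplify the intervening compositions should reduce this composite to an instance of (ii).

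For (iii) $\Rightarrow$ (i), I would extract torsion-freeness and flatness by precomposing the morphism-condition equation $T^2(K) K_T = K_{T^2} T(K)$ with suitably chosen lift morphisms such as $T^2(\ell_M)$, $T(\ell_{TM})$, and $\ell_{T^2 M}$, each a map $T^3M \rightarrow T^4M$. The identity $\ell K = 1$ from Proposition \ref{prop:propK}(a), together with its analogues for the connections $K_T$ and $K_{T^2}$ obtained by applying the strong morphism $(T,c):(\C,T) \rightarrow (\C,T)$, will serve as the main simplification tool, alongside the coherence axioms of the tangent structure relating $\ell$ and $c$ on iterated tangent bundles.

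The main obstacle I anticipate is the implication (iii) $\Rightarrow$ (i): a single equation between morphisms $T^4M \rightarrow T^2M$ must be leveraged to yield two apparently independent properties of $K$, and pinpointing the precompositions that separately isolate torsion-freeness and flatness will require careful use of the $\ell$-$c$ coherences. A viable alternative, should this route prove especially awkward, is to establish (ii) $\Leftrightarrow$ (iii) directly via a symmetric use of the fibre-product technique described above, and combine it with (i) $\Leftrightarrow$ (ii) proved by direct manipulation of the formula for $K_T$ in both directions.
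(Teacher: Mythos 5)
Your overall toolkit matches the paper's (the formula $K_T = T(c)cT(K)c$, the fibre-product presentation of $T^2M$ with projections $K$, $T(p)$, $p$ from Corollary \ref{thm:acx}, Lemma \ref{lemmaKT}, and Proposition \ref{prop:propK}), and your (i) $\Rightarrow$ (ii) computation is exactly the paper's. But there is a genuine gap in your middle leg, (ii) $\Rightarrow$ (iii). When you post-compose the desired equation $K_{T^2}T(K) = T^2(K)K_T$ with the projection $K$, the computation unavoidably needs the identity $K_{T^2}K_T = T(K_T)K_T$ --- that is, statement (ii) \emph{for the lifted connection $K_T$ on $TM$}. This is not what you get by ``applying $T$ to (ii)'': that yields $T(K_T)T(K) = T^2(K)T(K)$, which relates $T(K_T)$ to $T^2(K)$ but says nothing about $K_{T^2}$. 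Nor does expanding $K_{T^2} = T(c)cT(K_T)c$ help, because the resulting expression contains $c\,T(K_T)$ and $c\,K_T$ factors that cannot be simplified without already knowing that $K_T$ is flat and torsion-free. In your proposed cycle (i) $\Rightarrow$ (ii) $\Rightarrow$ (iii) $\Rightarrow$ (i), at the point where you prove (ii) $\Rightarrow$ (iii) you do not yet have (ii) $\Rightarrow$ (i), so you cannot reach (ii) for $K_T$ by any route.

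The paper avoids this by proving (i) $\Rightarrow$ (iii) instead of (ii) $\Rightarrow$ (iii): from (i) for $K$ it deduces (i) for $K_T$ via the lifting of flatness and torsion-freeness along the strong morphism $(T,c)$ (Proposition \ref{thm:sm-tf-fl}, through Theorem \ref{thm:aff-conn-tcats} and Remark \ref{rem:hcx-for-tb}), and then invokes the already-proved implication (i) $\Rightarrow$ (ii) for \emph{both} $K$ and $K_T$ before running the three-projection argument; the cycle is closed by (iii) $\Rightarrow$ (ii) (compose with $T(\ell)$ on the left and $K$ on the right) and (ii) $\Rightarrow$ (i) (compose with $T(\ell)c$, then apply $T(c)$), which is the same style of manipulation you sketch for your final leg. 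Your fallback plan --- prove (i) $\Leftrightarrow$ (ii) first and only then attack (iii) --- would repair the gap, but only if you note explicitly that you are then entitled to (i), hence to (i) and (ii) for $K_T$, when verifying the component along $K$; as written, the proposal never invokes the lifting result that makes this step go through.
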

\begin{proof}
We first prove that (i) implies (ii).  Assuming that $K$ is flat and torsion-free, consider
\begin{eqnarray*}
&   & K_TK \\
& = & T(c)cT(K)cK \\
& = & T(c)cT(K)K \mbox{ (since $K$ torsion-free)} \\
& = & T(c)T(K)K \mbox{ (since $K$ flat)} \\
& = & T(cK)K \\
& = & T(K)K \mbox{ (since $K$ torsion-free).} 
\end{eqnarray*}
so that we have (ii).  \\

Next, we prove that (i) implies (iii).  Assuming that $K$ satisfies (i), we can apply Theorem \ref{thm:aff-conn-tcats} and Remark \ref{rem:hcx-for-tb} to deduce that $K_T$ also satisfies (i).  Hence, since we have already proved that (i) implies (ii), we deduce that both $K$ and $K_T$ satisfy (ii), a fact that we shall use in the following computations.  For $K$ to be a morphism in $\Geom(\C,T)$ between the objects in (iii), we must show that
	\[ K_{T^2}T(K) = T^2(K)K_T. \]
These are both maps into $T^2M$.  Now by Corollary \ref{thm:acx}, $T^2M$ is the fibre product of three copies of $TM$, with projections $K, T(p), p$.  So, to show the equality of the above maps, it suffices to show their equality when followed by these three projections.  For the equality with $K$, consider
\begin{eqnarray*}
&    & K_{T^2}T(K)K \\
& = & K_{T^2}K_T K \mbox{ (by (ii))} \\
& = & T(K_T)K_TK \mbox{ (by (ii), applied to $K_T$)} \\
& = & T(K_T)T(K)K \mbox{ (by (ii))} \\
& = & T(K_T K)K \\
& = & T(T(K)K)K \mbox{ (by (ii))} \\
& = & T^2(K)T(K)K \\
& = & T^2(K)K_T K \mbox{ (by (ii))}
 \end{eqnarray*}
For the equality with $p$, consider
\begin{eqnarray*}
&   & K_{T^2}T(K)p \\
& = & K_{T^2}pK \mbox{ (by naturality of $p$)} \\
& = & ppK \mbox{ (by proposition \ref{prop:propK}.b)} \\
& = & T^2(K)pp \mbox{ (by naturality of $p$)} \\
& = & T^2(K)K_Tp \mbox{ (by proposition \ref{prop:propK}.b)}
\end{eqnarray*}
Finally, for the equality with $T(p)$, consider
\begin{eqnarray*}
&   & K_{T^2}T(K)T(p) \\
& = & K_{T^2}T(Kp) \\
& = & K_{T^2}T(pp) \mbox{ (by proposition \ref{prop:propK}.b)} \\
& = & K_{T^2}T(p)T(p) \\
& = & T^2(p)K_TT(p) \mbox{ (by \ref{lemmaKT}(i))} \\
& = & T^2(p)T^2(p)K \mbox{ (by \ref{lemmaKT}(i))} \\
& = & T^2(pp)K \\
& = & T^2(Kp)K \mbox{ (by proposition \ref{prop:propK}.b)} \\
& = & T^2(K)T^2(p)K \\
& = & T^2(K)K_T T(p) \mbox{ (by lemma \ref{lemmaKT}.i)}
\end{eqnarray*}
as required. \\

We will now prove (iii) implies (ii).  Suppose that $T^2(K)K_T = K_{T^2}T(K)$.  Then composing both sides of the equation on the left by $T(\ell)$ and on the right by $K$, we get
\begin{eqnarray*}
T(\ell)T^2(K)K_T K & = & T(\ell)K_{T^2}T(K)K \\
T(\ell T(K))K_T K & = & K_T \ell T(K)K \mbox{ (using lemma \ref{lemmaKT}, applied to $K_T$)} \\
T(K \ell)K_T K & = & K_T K \ell K \mbox{ (by proposition \ref{prop:propK}.c)} \\
T(K)T(\ell)K_T K & = & K_T K \mbox{ (by proposition \ref{prop:propK}.a)} \\
T(K)K \ell K & = & K_T K \mbox{ (by lemma \ref{lemmaKT})} \\
T(K)K & = & K_T K \mbox{ (by proposition \ref{prop:propK}.a)}
\end{eqnarray*}
so that we have (ii). \\

Finally, we will show that (ii) implies (i).  Suppose that $K_T K = T(K)K$; in other words, 
	\[ T(c)cT(K)cK = T(K)K \ (\star). \]
Composing both sides of this equation on the left by $T(\ell)c$ gives
\begin{eqnarray*}
T(\ell)cT(c)cT(K)cK & = & T(\ell)cT(K)K \\
c \ell c T(K) c K & = & T(\ell)cT(K)K \mbox{ (by coherence of $\ell$ and $c$)} \\
c \ell T(K) c K & = & T(\ell)cT(K)K \\
c K \ell cK & = & K \ell K \mbox{ (by proposition \ref{prop:propK}.c)} \\
c K \ell K & = & K \mbox{ (by proposition \ref{prop:propK}.a)} \\
cK & = & K \mbox{ (by proposition \ref{prop:propK}.a)}
\end{eqnarray*}
So we have proven that $K$ is torsion-free.  Applying $cK = K$ to $\star$, we get
	\[ T(c)cT(K)K = T(K)K. \]
Now apply $T(c)$ to both sides of this equation to get
\begin{eqnarray*}
T(c)T(c) c T(K)K & = & T(c)T(K)K \\
c T(K)K & = & T(cK)K \\
c T(K)K & = & T(K)K \mbox{ (since $K$ torsion-free)}
\end{eqnarray*}
so that $K$ is flat.  Thus, we have proven (i).  

Altogether, we have proven 
	\[ (i) \Rightarrow (iii) \Rightarrow (ii) \Rightarrow (i), \]
and so all three conditions are equivalent.
\end{proof}

This result also allows us to prove several useful results about objects in the affine category.

\begin{corollary}\label{corTKK}
If $(M,K) \in \Aff(\C,T)$ then:
\begin{enumerate}[(i)]
	\item $K_TK = T(K)K$;
	\item $K$ is a morphism in $\Aff(\C,T)$ from $(T^2M,K_{T^2})$ to $(TM,K_T)$ and provides $(M,K)$ with the structure of a flat torsion-free connection in $\Aff(\C,T)$.   
	\item The maps from (ii) form the components of a natural transformation from $T^2_*$ to $T_*: \Aff(\C,T) \to \Aff(\C,T)$.   
\end{enumerate}
\end{corollary}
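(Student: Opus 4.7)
The plan is to reduce each part to results already in hand, with the forgetful functor $U:\Aff(\C,T) \rightarrow \C$ serving as a bridge between the two tangent-categorical layers.  For part (i), the assumption $(M,K) \in \Aff(\C,T)$ is precisely that $K$ is flat and torsion-free, i.e., condition (i) of Theorem \ref{propFTFConditions}; the equivalence (i)$\Leftrightarrow$(ii) there yields $K_T K = T(K) K$ immediately.

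For part (ii), I first observe that by Theorem \ref{thm:aff-conn-tcats}, $\Aff(\C,T)$ is a tangent category with endofunctor $T_*$, and so $T_*(M,K) = (TM, K_T)$ and $T_*^2(M,K) = (T^2M, K_{T^2})$ both lie in $\Aff(\C,T)$.  Theorem \ref{propFTFConditions}(iii) tells us that $K$ is a morphism from $(T^2M, K_{T^2})$ to $(TM, K_T)$ in $\Geom(\C,T)$, hence also in the full subcategory $\Aff(\C,T)$.  To promote this to the second clause --- that $K$ is a flat torsion-free connection on $(M,K)$ in the tangent category $\Aff(\C,T)$ --- I would argue by internalisation: every condition involved in being a (flat, torsion-free) connection (Definitions \ref{defnVertConnection}, \ref{defn:flat}, \ref{defn:torsionFree}, \ref{def:acx-v}) is either an equation between morphisms or the fibre-product condition \eqref{eq:acx-fp}, expressed in terms of the tangent structure.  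Since the tangent structure on $\Aff(\C,T)$ is defined via $T_*, p_*, 0_*, +_*, \ell_*, c_*$ (each forgetting to its unadorned counterpart under $U$), and since by Lemma \ref{thm:refl-lim-cx} the required fibre product in $\Aff(\C,T)$ is reflected from the corresponding one in $\C$, each such condition in $\Aff(\C,T)$ reduces to its analogue in $\C$, all of which we have by hypothesis.

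For part (iii), given a morphism $f:(M,K) \rightarrow (M', K')$ in $\Aff(\C,T)$, naturality amounts to the commutativity of the square
$$\xymatrix{T^2 M \ar[d]_{T^2(f)} \ar[r]^{K} & TM \ar[d]^{T(f)} \\ T^2 M' \ar[r]_{K'} & TM'}$$
in $\Aff(\C,T)$ (equivalently, by faithfulness of $U$, in $\C$), but this is precisely the defining condition for $f$ to be a morphism of geometric spaces.

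The only non-trivial step is the internalisation argument in part (ii); no real obstacle is anticipated, since the tangent-categorical framework for $\Aff(\C,T)$ developed in Section 5, combined with Theorem \ref{propFTFConditions}, is tailored to make the reduction routine.
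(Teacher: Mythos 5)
Your proposal is correct and follows essentially the same route as the paper: part (i) and the morphism claim in (ii) are read off from Theorem \ref{propFTFConditions}, the connection axioms in $\Aff(\C,T)$ are reduced to those in $\C$ via the lifted tangent structure (with Lemma \ref{thm:refl-lim-cx} handling the fibre-product condition), and (iii) is exactly the definition of morphisms in $\Aff(\C,T)$. Your write-up of the internalisation step in (ii) is in fact more explicit than the paper's one-line justification.
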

\begin{proof}
(i) and the fact that $K$ is a morphism in $\Aff(\C,T)$ were proved in the theorem.  Moreover, as the tangent structure on $\Aff(\C,T)$ is lifted from $(\C,T)$, this also shows that $K$ is a flat torsion-free connection on $(M,K)$ in the tangent category $\Aff(\C,T)$.  

Finally, that these maps form a natural transformation from $T^2_*$ to $T_*$ on $\Aff(\C,T)$ follows directly from the definition of maps in $\Aff(\C,T)$, namely that such maps preserve the associated connections of the objects.  
\end{proof}

\section{The 2-comonad of affine geometric spaces}

In \ref{thm:geom-2func-to-cat} we saw that there are 2-functors $\Geom,\Aff:\Tan \rightarrow \Cat$ that send each tangent category $(\C,T)$ to the categories  of geometric spaces and affine geometric spaces in $(\C,T)$, respectively.  As a consequence we found that $\Geom(\C,T)$ and $\Aff(\C,T)$ are tangent categories (\ref{thm:aff-conn-tcats}), so it is natural to wonder whether $\Geom$ and $\Aff$ lift to 2-functors valued in $\Tan$.  We now address this question, and we show that $\Aff$ underlies a 2-comonad, whose coalgebras are tangent categories whose objects carry affine geometric structure.  Here we employ the standard notion of (strict) 2-monad (as employed, for example, in \cite{BKP}). 

\begin{definition}
Let $(\C,T)$ be a tangent category.  Each of the following fibre products in $\C$ will be called a \textbf{basic fibre product} in $(\C,T)$: (1) Each fibre product of the form $T_nM$, and (2) each pullback witnessing the \textnormal{universality of the vertical lift} \cite[Def. 2.1]{CC3}.  A \textbf{class of endemic fibre products} in $(\C,T)$ is a class $\sF$ of finite fibre product diagrams in $\C$ that is closed under the application of $T$ and contains each basic fibre product.  There is clearly a smallest class of endemic fibre products in $(\C,T)$, consisting of the fibre product diagrams obtained by repeatedly applying $T$ to the basic fibre product diagrams. 
\end{definition}

Concretely, we shall represent finite fibre product diagrams in $\C$ as certain functors $D:\J_n \rightarrow \C$ on categories $\J_n$ defined as follows.  For each natural number $n$, $\J_n$ is a partially ordered set with $n+2$ distinct elements $1,2,...,n,\bot,\top$, in which $\bot$ is a bottom element, $\top$ is a top element, and the remaining elements $1,2,...,n$ are mutually incomparable.

\begin{definition}
There is a 2-category $\Tan_e$ whose objects $(\C,T,\sF)$ are tangent categories with a given class of endemic fibre products $\sF$.  A 1-cell $F:(\C,T,\sF) \rightarrow (\C',T',\sF')$ in $\Tan_e$ is a strong morphism of tangent categories that preserves endemic fibre products, i.e. sends fibre product diagrams in $\sF$ to fibre product diagrams in $\sF'$.  The 2-cells in $\Tan_e$ are simply tangent transformations.
\end{definition}

\begin{proposition}\label{thm:geomct-endfps}
Let $(\C,T,\sF)$ be a tangent category with endemic fibre products, and let $U = U_\C:\Geom(\C,T) \rightarrow \C$ denote the forgetful functor.  Then $\Geom(\C,T)$ carries a class of endemic fibre products $U^*(\sF)$, consisting of all diagrams of the form $D:\J_n \rightarrow \Geom(\C,T)$ with $UD \in \sF$.  The functor $U$ underlies a strict morphism of tangent categories, which in turn underlies a 1-cell $U:(\Geom(\C,T),T_*,U^*(\sF)) \rightarrow (\C,T,\sF)$ in $\Tan_e$.  Further, the analogous results hold with each of $\Aff$, $\GeomFlat$, and $\GeomTf$ in place of $\Geom$.
\end{proposition}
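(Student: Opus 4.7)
The plan is to verify each clause of the proposition by systematic appeal to Lemma \ref{thm:refl-lim-cx}, which reflects limits from $\C$ back to $\Geom(\C,T)$ whenever they are preserved by every $T^k$. Since endemic fibre product classes are by definition closed under $T$, this lemma is almost tailored to the task, and the whole argument is essentially formal.

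First I would check that $U^*(\sF)$ is a class of endemic fibre products in $(\Geom(\C,T),T_*)$. For any $D\colon\J_n\to\Geom(\C,T)$ with $UD\in\sF$, closure of $\sF$ under $T$ gives $T^k\circ UD\in\sF$ for every $k$, so each such composite is a fibre product diagram in $\C$; Lemma \ref{thm:refl-lim-cx} then shows that $D$ itself is a limit cone in $\Geom(\C,T)$ preserved by each $T_*^k$. For closure of $U^*(\sF)$ under $T_*$, the endofunctor $T_*$ acts as $T$ on the underlying objects and morphisms (\ref{thm:sm-aff-func}), so $UT_*=TU$ and hence $U(T_*\circ D)=T\circ UD\in\sF$. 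For the basic fibre products of $(\Geom(\C,T),T_*)$: each fibre power $(T_n)_*(M,K)$ with projections $(\pi_i)_*$ is, by Lemma \ref{thm:fps-in-aff}, built on top of the fibre power $T_n M$ in $\C$, so $U$ sends its defining cone to the basic fibre product $T_n M\in\sF$. For the universality-of-lift square attached to the canonical differential bundle on $T_*(M,K)$ in $\Geom(\C,T)$, the proof of Theorem \ref{thm:aff-conn-tcats} shows that this square has the same underlying data as the basic vertical lift pullback on $TM$ in $\C$, which again lies in $\sF$.

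Next I would verify that $U$ is a strict morphism of tangent categories. By Corollary \ref{thm:strmr-aff-conn} and Theorem \ref{thm:aff-conn-tcats}, each of the structural data $T_*, p_*, 0_*, +_*, \ell_*, c_*$ has components whose underlying data are the corresponding transformations in $\C$, so the canonical comparison $\alpha^U\colon UT_*\to TU$ is the identity. That $U$ preserves the finite pullbacks prescribed by the tangent structure on $\Geom(\C,T)$ is exactly the content of the step above applied to the basic fibre products. Finally, $U$ preserves endemic fibre products by the very definition of $U^*(\sF)$, which gives the desired 1-cell in $\Tan_e$. The cases of $\Aff$, $\GeomFlat$, and $\GeomTf$ require no new work: each is a full subcategory of $\Geom(\C,T)$ closed under $T_*$ by Corollary \ref{thm:strmr-aff-conn}, and Lemma \ref{thm:refl-lim-cx} applies uniformly to any such subcategory, so the identical argument carries through.

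The only step requiring genuine attention is the confirmation that the basic vertical-lift pullback in $(\Geom(\C,T),T_*)$ is literally sent by $U$ to the basic vertical-lift pullback in $(\C,T)$, but this is built into the 2-functorial construction of $T_*$ (via \ref{thm:geom-2func-to-cat} and \ref{thm:aff-conn-tcats}), so the check amounts to tracing definitions. Everything else is a mechanical consequence of the closure of $\sF$ under $T$ together with the reflection lemma \ref{thm:refl-lim-cx}.
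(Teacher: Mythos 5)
Your proposal is correct and follows essentially the same route as the paper's proof: apply Lemma \ref{thm:refl-lim-cx} to reflect the fibre products from $\C$, use $UT_*=TU$ for closure under $T_*$, and observe that the basic fibre products of $\Geom(\C,T)$ are sent by $U$ to basic fibre products via the constructions in \ref{thm:fps-in-aff} and \ref{thm:aff-conn-tcats}. The extra detail you supply on the strictness of $U$ as a morphism of tangent categories is a correct elaboration of what the paper leaves implicit.
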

\begin{proof}
Given any diagram $D:\J_n \rightarrow \Geom(\C,T)$ in $U^*(\sF)$, we know that $UD \in \sF$ and hence $T^kUD \in \sF$ for every $k \in \N$, by induction on $k$.  Therefore $UD$ is a fibre product diagram in $\C$ that is preserved by each $T^k$, so by \ref{thm:refl-lim-cx} we deduce that $D$ is a fibre product diagram in $\Geom(\C,T)$.  Note also that $T_*D \in U^*(\sF)$, since $UT_*D = TUD \in \sF$.

Hence $U^*(\sF)$ is a class of fibre product diagrams in $\Geom(\C,T)$, and $U^*(\sF)$ is closed under the application of $T_*$.  In view of the construction of the tangent structure on $\Geom(\C,T)$ in \ref{thm:aff-conn-tcats} and \ref{thm:fps-in-aff}, it is clear that the basic fibre products in $\Geom(\C,T)$ are sent by $U$ to basic fibre products in $(\C,T)$ and hence lie in $U^*(\sF)$.
\end{proof}

We shall now prove that $\Geom$ lifts to a 2-endofunctor on $\Tan_e$.  We begin with the following general observation:

\begin{lemma}\label{lem:alpha-ttransf}
Let $(F,\alpha):(\C,T) \rightarrow (\C',T')$ be a morphism of tangent categories.  Then $\alpha:FT \Rightarrow T'F$ underlies a tangent transformation
$$\alpha\;:\;(F,\alpha) \circ (T,c) \Longrightarrow (T',c') \circ (F,\alpha)\;:\;(\C,T) \rightarrow (\C',T')$$
where $c,c'$ denote the canonical flips carried by $\C,\C'$, respectively (cf. \ref{thm:tstr-sm-tt}).
\end{lemma}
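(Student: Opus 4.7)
The plan is to reduce the tangent transformation axiom for $\alpha$ to one of the coherence axioms already imposed on the morphism $(F,\alpha)$ in Definition \ref{def:str-mor}.

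First, I will identify the source and target 1-cells of the putative 2-cell $\alpha$ by applying the composition formula for morphisms of tangent categories stated immediately after Definition \ref{def:str-mor}. For the source 1-cell $(F,\alpha) \circ (T,c)$, the underlying functor is $FT$ and the structure 2-cell is $c * \alpha = F(c)\alpha_T\colon FT^2 \to T'FT$, while for the target 1-cell $(T',c') \circ (F,\alpha)$, the underlying functor is $T'F$ and the structure 2-cell is $\alpha * c' = T'(\alpha)c'_F\colon T'FT \to T'^2F$. Thus $\alpha\colon FT \Rightarrow T'F$ has precisely the required source and target underlying functors.

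Next, since $\alpha$ is already a natural transformation by virtue of being part of the data of $(F,\alpha)$, the only remaining obligation is the tangent transformation coherence condition recorded in Section \ref{par:2cat-tan}, which in our setting reads
\[
(c * \alpha)\, T'(\alpha) \;=\; \alpha_T\, (\alpha * c')\colon FT^2 \longrightarrow T'^2F.
\]
Substituting the formulas for $c * \alpha$ and $\alpha * c'$ from the previous paragraph, the left-hand side becomes $F(c)\,\alpha_T\, T'(\alpha)$ and the right-hand side becomes $\alpha_T\, T'(\alpha)\, c'_F$. Since $\alpha^{[2]} = \alpha_T T'(\alpha)$ by the definition in \ref{def:str-mor}, these simplify to $F(c)\,\alpha^{[2]}$ and $\alpha^{[2]}\, c'_F$, respectively.

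Finally, I will invoke the axiom $F(c)\,\alpha^{[2]} = \alpha^{[2]}\, c'_F$ recorded in Definition \ref{def:str-mor}, which holds because $(F,\alpha)$ is a morphism of tangent categories. This is exactly the identity needed, and the proof is done. I do not anticipate any genuine obstacle: the sole delicacy is correctly applying the horizontal-composition formula of \ref{def:str-mor} to obtain $c * \alpha$ and $\alpha * c'$ in the form above, after which the tangent coherence condition for $\alpha$ is literally the $c$-coherence axiom for $(F,\alpha)$.
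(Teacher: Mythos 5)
Your proposal is correct and follows essentially the same route as the paper: compute the composite structure 2-cells $c * \alpha = F(c)\alpha_T$ and $\alpha * c' = T'(\alpha)c'_F$, reduce the tangent-transformation condition to $F(c)\alpha^{[2]} = \alpha^{[2]}c'_F$, and conclude by the $c$-coherence axiom of Definition \ref{def:str-mor}. No gaps.
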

\begin{proof}
Using the definition of composition of morphisms of tangent categories (\ref{def:str-mor}), we first note that $(F,\alpha) \circ (T,c) = (FT,c * \alpha)$ and $(T',c') \circ (F,\alpha) = (T'F, \alpha * c')$ where $c * \alpha = F(c)\alpha_T:FTT \Rightarrow T'FT$ and $\alpha * c' = T'(\alpha)c'_F:T'FT \Rightarrow T'T'F$.  Hence it suffices to show that the diagram
$$
\xymatrix{
FTT \ar[d]_{\alpha_T} \ar[r]^{c * \alpha} & T'FT \ar[d]^{T'(\alpha)}\\
T'FT \ar[r]_{\alpha * c'} & T'T'F
}
$$
commutes.  Indeed,
$$\alpha_T(\alpha * c') = \alpha_TT'(\alpha)c'_F = \alpha^{[2]}c'_F = F(c)\alpha^{[2]} = F(c)\alpha_TT'(\alpha) = (c * \alpha)T'(\alpha)$$
since $(F,\alpha)$ is a  morphism of tangent categories (\ref{def:str-mor}).
\end{proof}

\begin{theorem}
There is a 2-functor
$$\Geom\;:\;\Tan_e \rightarrow \Tan_e$$
sending each tangent category with endemic fibre products $(\C,T,\sF)$ to the tangent category $\Geom(\C,T)$ of geometric spaces in $(\C,T)$, equipped with its associated class of endemic fibre products $U^*(\sF)$ (\ref{thm:geomct-endfps}).  Similarly, there are 2-functors $\GeomFlat,\GeomTf,\Aff:\Tan_e \rightarrow \Tan_e$ sending $(\C,T,\sF)$ to the tangent categories of flat, torsion-free, and affine geometric spaces in $(\C,T)$, respectively.
\end{theorem}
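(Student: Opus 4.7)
The plan is to lift the already-established 2-functor $\Geom:\Tan \rightarrow \Cat$ from \ref{thm:geom-2func-to-cat} to take values in $\Tan_e$, using \ref{thm:aff-conn-tcats}, \ref{thm:geomct-endfps}, and especially \ref{lem:alpha-ttransf} as the key ingredients. On objects the assignment is $(\C,T,\sF) \mapsto (\Geom(\C,T), T_*, U^*(\sF))$, already known to land in $\Tan_e$ by \ref{thm:geomct-endfps}.

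The main step is the action on 1-cells. Given $(F,\alpha):(\C,T,\sF) \rightarrow (\C',T',\sF')$ in $\Tan_e$, the underlying functor is $F_*:\Geom(\C,T) \rightarrow \Geom(\C',T')$ from \ref{thm:sm-aff-func}. To equip $F_*$ with the structure of a strong morphism of tangent categories from $(\Geom(\C,T),T_*)$ to $(\Geom(\C',T'),T'_*)$, I would apply the 2-functor $\Geom:\Tan \rightarrow \Cat$ to the tangent transformation $\alpha:(F,\alpha) \circ (T,c) \Rightarrow (T',c') \circ (F,\alpha)$ provided by \ref{lem:alpha-ttransf}. This yields a natural transformation $\alpha_*:F_* T_* \Rightarrow T'_* F_*$ whose component at $(M,K)$ is $\alpha_M$, viewed now as a morphism of geometric spaces. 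Verification that $(F_*,\alpha_*)$ is a strong morphism of tangent categories proceeds by noting that the forgetful functor $U_{\C'}:\Geom(\C',T') \rightarrow \C'$ is a strict morphism of tangent categories (\ref{thm:geomct-endfps}) and is faithful; hence each equation required by \ref{def:str-mor} can be checked after applying $U_{\C'}$, whereupon it reduces to the corresponding equation already known to hold for $(F,\alpha)$. Invertibility of $\alpha_*$ is similarly immediate componentwise. Preservation of endemic fibre products is a direct consequence of the construction of $U^*(\sF)$: for any $D \in U^*(\sF)$ we have $U_\C D \in \sF$, so $U_{\C'}F_* D = F U_\C D \in \sF'$, hence $F_* D \in U^*(\sF')$.

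For 2-cells, a tangent transformation $\phi:(F,\alpha) \Rightarrow (G,\beta)$ in $\Tan_e$ is sent to $\phi_*:F_* \Rightarrow G_*$ from \ref{thm:ttr-mor-conn}. Checking that $\phi_*$ is itself a tangent transformation between $(F_*,\alpha_*)$ and $(G_*,\beta_*)$ amounts to the equation $\alpha_* T'_*(\phi_*) = (\phi_*)_{T_*}\beta_*$, which once more reduces under the faithful $U_{\C'}$ to the defining equation of the tangent transformation $\phi$. Functoriality on 1-cells and 2-cells (identities, vertical and horizontal composition) then follows from the 2-functoriality of $\Geom:\Tan \rightarrow \Cat$ together with the observation that the natural transformation data $\alpha_*$ has been defined precisely so that it is compatible with composition in $\Tan$ — in particular, applying $\Geom$ to the 2-cell data of $\Tan$ directly yields the 2-cell data of $\Tan_e$. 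The analogous 2-functors $\GeomFlat$, $\GeomTf$, and $\Aff$ are obtained by restriction, using \ref{thm:sm-tf-fl} to confirm that $(F_*,\alpha_*)$ restricts appropriately and \ref{thm:strmr-aff-conn}/\ref{thm:aff-conn-tcats} to provide the tangent structure.

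The bulk of the work is thus organizational rather than computational: every individual verification reduces via the faithful forgetful morphisms to a fact already established in the excerpt. The main obstacle I anticipate is tracking the composition data carefully, particularly the interplay between the composite 1-cells $(F,\alpha) \circ (T,c)$ and $(T',c') \circ (F,\alpha)$ in $\Tan$ and their images under $\Geom$, so that the tangent-transformation axioms for $(F_*,\alpha_*)$ are recognised as the images under $\Geom$ of the corresponding axioms for $(F,\alpha)$ rather than re-derived from scratch.
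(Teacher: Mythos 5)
Your proposal is correct and takes essentially the same route as the paper: both apply the 2-functor $\Geom:\Tan \rightarrow \Cat$ to the tangent transformation $\alpha$ furnished by Lemma \ref{lem:alpha-ttransf} to obtain $\alpha_*:F_*T_* \Rightarrow T'_*F_*$, verify preservation of endemic fibre products via the same computation $U_{\C'}F_*D = FU_\C D \in \sF'$, and deduce the remaining verifications formally. The only cosmetic difference is that you check the equational axioms for $(F_*,\alpha_*)$ and the tangent-transformation condition for $\phi_*$ by faithfulness of the strict forgetful morphism, where the paper appeals directly to the 2-functoriality of $\Geom:\Tan \rightarrow \Cat$; these amount to the same thing.
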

\begin{proof}
We shall treat the case of $\Geom$; the other 2-functors are obtained similarly, using \ref{thm:geom-2func-to-cat}.  Letting $F:(\C,T,\sF) \rightarrow (\C',T',\sF')$ be a 1-cell in $\Tan_e$, we know that the associated isomorphism $\alpha^F:FT \Rightarrow T'F$ is a tangent transformation and so is a 1-cell in $\Tan$.  Hence we can apply $\Geom:\Tan \rightarrow \Cat$ to $\alpha^F$ in order to obtain an invertible 2-cell $\alpha^F_*:F_*T_* \Rightarrow T'_*F_*:\Geom(\C,T) \rightarrow \Geom(\C',T')$ in $\Cat$.  We claim that $(F_*,\alpha^F_*):\Geom(\C,T) \longrightarrow \Geom(\C',T')$ is a 1-cell in $\Tan_e$.  Indeed, employing the notation of \ref{thm:geomct-endfps}, we reason that for each $D \in U_\C^*(\sF)$ the composite $F_*D$ lies in $U_{\C'}^*(\sF')$, since $U_{\C'}F_*D = FU_\C D \in \sF'$ because $U_\C D \in \sF$.  Hence $F_*$ preserves endemic fibre products and so, in particular, sends basic fibre product diagrams to fibre product diagrams.  Also, since $\Geom:\Tan \rightarrow \Cat$ is 2-functorial and $F$ is a strong morphism of tangent categories, it follows that $(F_*,\alpha^F_*)$ satisfies the equational axioms for a morphism of tangent categories (\ref{def:str-mor}).

This defines the needed assignment on 1-cells, and the functoriality of this assignment readily follows from the 2-functoriality of $\Geom:\Tan \rightarrow \Cat$.  Given a 2-cell $\phi:F \Rightarrow G:(\C,T,\sF) \rightarrow (\C',T',\sF')$ in $\Tan_e$, we can apply $\Geom:\Tan \rightarrow \Cat$ to obtain a natural transformation $\phi_*:F_* \Rightarrow G_*:\Geom(\C,T) \rightarrow \Geom(\C',T')$ in $\Cat$, which is in fact a tangent transformation
$$\phi_*:(F_*,\alpha^F_*) \Rightarrow (G_*,\alpha^G_*):\Geom(\C,T) \rightarrow \Geom(\C',T')$$
since ${\phi_*}_{T_*}\alpha^G_* = (\phi_T\alpha^G)_* = (\alpha^FT'(\phi))_* = \alpha^F_*T'_*(\phi_*)$ by the 2-functoriality of $\Geom:\Tan \rightarrow \Cat$.  Again using the latter 2-functoriality, the result now follows.
 \end{proof}

\begin{theorem}\label{thm:2-comonad-aff}
There is a 2-comonad $\AAff = (\Aff,\varepsilon,\delta)$ on $\Tan_e$ whose underlying 2-functor
$$\Aff:\Tan_e \rightarrow \Tan_e$$
sends each tangent category with endemic fibre products, $(\C,T,\sF)$, to the tangent category $\Aff(\C,T)$ of affine geometric spaces in $(\C,T)$.  The counit 1-cell
$$\varepsilon_{(\C,T,\sF)}:\Aff(\C,T) \rightarrow (\C,T)$$
in $\Tan_e$ is the forgetful functor, and the comultiplication 1-cell
\begin{equation}\label{eq:delta}\delta_{(\C,T,\sF)}:\Aff(\C,T) \rightarrow \Aff(\Aff(\C,T))\end{equation}
sends each affine geometric space $(M,K)$ in $(\C,T)$ to the affine geometric space $((M,K),K)$ in $\Aff(\C,T)$.
\end{theorem}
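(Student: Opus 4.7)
The plan is to define $\varepsilon$ and $\delta$ explicitly as 1-cells in $\Tan_e$, verify their 2-naturality, and check the three comonad axioms on objects, propagating each equality via faithfulness of the relevant forgetful functors.

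For the counit, I take $\varepsilon_{(\C,T,\sF)}$ to be the forgetful 1-cell $U_\C:\Aff(\C,T) \to (\C,T)$ already furnished by \ref{thm:geomct-endfps}. Its 2-naturality in $(\C,T,\sF)$ reduces to the identities $F \circ U_\C = U_{\C'} \circ F_*$ on 1-cells and $\phi \circ U_\C = U_{\C'} \circ \phi_*$ on 2-cells, both immediate from \ref{thm:sm-aff-func} and \ref{thm:ttr-mor-conn}.

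The key ingredient for $\delta$ is Corollary \ref{corTKK}(ii), which says that the connection $K$ of an affine geometric space $(M,K)$ lifts to a flat torsion-free connection on $(M,K)$ inside $\Aff(\C,T)$. I therefore set $\delta_{(\C,T,\sF)}(M,K) = ((M,K),K)$ and define $\delta$ on morphisms by $\delta(f) = f$, which is valid because the connection-preservation condition for $f$ in $\Aff(\Aff(\C,T))$ has underlying equation in $\C$ exactly the one witnessing $f$ as a morphism in $\Aff(\C,T)$. To see $\delta$ is a strict 1-cell in $\Tan_e$, I check that on objects $\delta T_*(M,K) = ((TM,K_T),K_T)$ while $(T_*)_*\delta(M,K) = ((TM,K_T),K_{T_*})$; applying the strict faithful $U_\C$ to the connection $K_{T_*}$ produced by \ref{thm:sm-acx} for the strong morphism $(T_*,c_*)$ gives $c_M^{[-2]} T(K) c_M = K_T$, so $K_{T_*} = K_T$ in $\Aff(\C,T)$. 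The structural transformations $p_*,0_*,+_*,\ell_*,c_*$ are preserved by an analogous underlying-morphism-and-faithfulness argument, and preservation of endemic fibre products follows from the construction of $U^*(\sF)$ in \ref{thm:geomct-endfps}. The 2-naturality of $\delta$ reduces similarly to the identity $K_{F_*} = K_F$ in $\Aff(\C',T')$ for each strong morphism $F:(\C,T,\sF) \rightarrow (\C',T',\sF')$, since both have underlying morphism $\alpha^{[-2]}_M F(K) \alpha_M$ in $\C'$.

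The comonad axioms are then immediate on objects: $\varepsilon_{\Aff}\delta = 1$ trivially; $\Aff(\varepsilon)\delta(M,K) = (\varepsilon(M,K),\varepsilon(K)) = (M,K)$ since $\varepsilon = U_\C$ is strict and $U_\C(K) = K$; and coassociativity $\Aff(\delta)\delta = \delta_{\Aff}\delta$ evaluates to $(((M,K),K),K)$ on both sides. I expect the main technical obstacle to be the strictness of $\delta$ with respect to the full tangent structure, in particular for the higher structural maps such as $\ell_*$ and $c_*$ at objects $\delta(M,K)$ in $\Aff(\Aff(\C,T))$. This obstacle dissolves uniformly because the doubly-iterated forgetful $U_\C \circ U_{\Aff(\C,T)}:\Aff(\Aff(\C,T)) \to \C$ is strict and faithful, and sends the candidate components on either side of each needed equation to the same morphism in $\C$ (such as $\ell_M$ or $c_M$), thereby reducing all such verifications to identities already established in $(\C,T)$.
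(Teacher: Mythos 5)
Your proposal is correct and follows essentially the same route as the paper's proof: the same counit and comultiplication, the same appeal to Corollary \ref{corTKK} to make $((M,K),K)$ an object of $\Aff(\Aff(\C,T))$, and the same reduction of strictness, naturality, and the comonad laws to underlying identities in $\C$ via faithfulness and strictness of the forgetful functors. The only cosmetic difference is that you verify strictness of $\delta$ by computing $K_{T_*}=K_T$ directly, whereas the paper infers it from the commuting counit triangle together with the faithful strict 1-cell $\varepsilon_{\Aff(\C,T)}$.
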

\begin{proof}
By \ref{thm:geomct-endfps}, we know that each forgetful functor $\varepsilon_{(\C,T,\sF)}$ is a strict morphism of tangent categories and is also a 1-cell in $\Tan_e$.  Further, it is immediate from the definitions that this defines a 2-natural transformation $\varepsilon:\Aff \rightarrow 1_{\Tan_e}$.

With regard to the comultiplication $\delta$, recall that if $(M,K)$ is an affine geometric space in a tangent category $(\C,T)$, then $K:T^2_*(M,K) \rightarrow T_*(M,K)$ is a flat torsion-free connection on $(M,K)$ in $\Aff(\C,T)$ (\ref{corTKK}), so $((M,K),K)$ is an affine geometric space in $\Aff(\C,T)$, i.e. an object of $\Aff(\Aff(\C,T))$.  For each object $(\C,T,\sF)$ of $\Tan_e$, this defines $\delta_{(\C,T,\sF)}$ on objects.  Given a morphism $f:(M,K) \rightarrow (M',K')$ in $\Aff(\C,T)$, it is immediate that $\delta_{(\C,T,\sF)}(f) = f:((M,K),K) \rightarrow ((M',K'),K')$ defines a morphism in $\Aff(\Aff(\C,T))$.  Thus we obtain a functor $\delta_{(\C,T,\sF)}$ as in \eqref{eq:delta}.  The diagram of functors
\begin{equation}\label{eq:counit1}
\xymatrix{
\Aff(\C,T) \ar@{=}[dr] \ar[r]^(.45){\delta_{(\C,T,\sF)}} & \Aff(\Aff(\C,T)) \ar[d]^{\varepsilon_{\Aff(\C,T)}}\\
& \Aff(\C,T)
}
\end{equation}
clearly commutes, and $\varepsilon_{\Aff(\C,T)}$ is a strict morphism of tangent categories and is also a faithful functor, so it follows that $\delta_{(\C,T,\sF)}$ is a strict morphism of tangent categories.  Using the commutativity of this diagram, we also find that $\delta_{(\C,T,\sF)}$ preserves endemic fibre products (since $\varepsilon_{\Aff(\C,T)}$ \textit{reflects} endemic fibre products).  Hence $\delta_{(\C,T,\sF)}$ is a 1-cell in $\Tan_e$.  

This defines a natural transformation $\delta:\Aff \rightarrow \Aff \circ \Aff$, since if $F:(\C,T,\sF) \rightarrow (\C',T',\sF')$ is a 1-cell in $\Tan_e$ then the diagram
$$
\xymatrix{
\Aff(\C,T) \ar[d]_{F_*} \ar[r]^(.45)\delta & \Aff(\Aff(\C,T)) \ar[d]^{(F_*)_*}\\
\Aff(\C',T') \ar[r]_(.45)\delta             & \Aff(\Aff(\C',T'))
}
$$
commutes.  Indeed, for each object $(M,K)$ of $\Aff(\C,T)$ we compute that
$$
\begin{array}{cccccccc}
(F_*)_*(\delta(M,K)) & = & (F_*)_*((M,K),K)   & = & (F_*(M,K),K_{F_*}) & = & ((FM,K_F),K_{F_*})\\
                    & = & ((FM,K_F),K_F)     & = & \delta(FM,K_F) & = & \delta(F_*(M,K)) & 
\end{array}
$$
since the definitions of $K_F$ and $K_{F_*}$ readily entail that
$$K_{F_*} = K_F:{T'_*}^2(FM,K_F) \rightarrow T'_*(FM,K_F)$$
by the 2-functoriality of $\Aff = (-)_*:\Tan \rightarrow \Cat$.  Commutativity on morphisms is immediate.  The resulting natural tranformation $\delta$ is, moreover, 2-natural, as one readily verifies.

We already know that $(\Aff,\varepsilon,\delta)$ satisfies one of the co-unit laws \eqref{eq:counit1}.  For the other, we must show that the composite
$$\Aff(\C,T) \xrightarrow{\delta_{(\C,T,\sF)}} \Aff(\Aff(\C,T)) \xrightarrow{U_*} \Aff(\C,T)$$
is the identity, where $U = \varepsilon_{(\C,T,\sF)}$.  But this is nearly immediate, since this composite sends each object $(M,K)$ to $(M,K_U)$, while $K_U = K$ since the forgetful functor $U$ is a strict morphism of tangent categories.

For the co-associativity law, we must show that the diagram
$$
\xymatrix{
\Aff(\C,T) \ar[d]_{\delta_{(\C,T,\sF)}} \ar[r]^(.45){\delta_{(\C,T,\sF)}} & \Aff(\Aff(\C,T)) \ar[d]^{(\delta_{(\C,T,\sF)})_*}\\
\Aff(\Aff(\C,T)) \ar[r]_(.45){\delta_{\Aff(\C,T)}} & \Aff(\Aff(\Aff(\C,T)))
}
$$
commutes.  But the definitions readily entail that both composites send an object $(M,K)$ of $\Aff(\C,T)$ to $(((M,K),K),K)$, and the commutativity on arrows is immediate.
\end{proof}

\begin{definition}
An \textbf{affine tangent category} is an Eilenberg-Moore $\AAff$-coalgebra, for the 2-comonad $\AAff = (\Aff,\varepsilon,\delta)$ defined in \ref{thm:2-comonad-aff}.  Hence affine tangent categories are the objects of a 2-category, namely the Eilenberg-Moore 2-category for the 2-comonad $\AAff$.
\end{definition}

\begin{proposition}
An affine tangent category is equivalently given by a tangent category with endemic fibre products $(\C,T,\sF)$ in which each object $M$ is equipped with a flat torsion-free connection $K_M$ such that
\begin{enumerate}
\item every morphism $f:M \rightarrow N$ in $\C$ preserves the given connections $K_M$ and $K_N$, in the sense that $K_MT(f) = T^2(f)K_N$, and 
\item for each object $M$ of $\C$, the following diagram commutes:
$$
\xymatrix{
T^3M \ar[d]_{K_{TM}} \ar[r]^{T(c_M)} & T^3M \ar[r]^{c_{TM}} & T^3M \ar[d]^{T(K_M)}\\
T^2M \ar[rr]_{c_M} & & T^2M
}
$$
\end{enumerate} 
\end{proposition}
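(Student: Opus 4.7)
The plan is to unfold what an $\AAff$-coalgebra structure on $(\C, T, \sF)$ amounts to concretely, matching the unfolded data against the proposition's conditions. Such a structure is a 1-cell $\gamma\colon (\C,T,\sF) \to \Aff(\C,T,\sF)$ in $\Tan_e$ together with the counit law $\varepsilon \circ \gamma = 1$ and the coassociativity law $\Aff(\gamma) \circ \gamma = \delta \circ \gamma$, both taken as equations of 1-cells in $\Tan_e$. I will extract the data from $\gamma$ in three steps.

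The first step exploits faithfulness and strictness of the counit $\varepsilon = U$ from \ref{thm:geomct-endfps}. The counit law forces $\gamma$ to be a section of $U$ on underlying functors; and since the structure isomorphism of $U \gamma$ is $U(\alpha^\gamma)$, this must equal the identity, whence faithfulness of $U$ gives $\alpha^\gamma = 1$. Thus $\gamma$ is automatically a \emph{strict} morphism of tangent categories. Being a strict section of $U$ on objects amounts to choosing, for each $M \in \C$, a flat torsion-free connection $K_M$ with $\gamma(M) = (M, K_M)$; being a strict section on morphisms is exactly condition (1). All remaining strictness conditions on $\gamma$ for $p, 0, +, \ell, c$ and the preservation of endemic fibre products are then automatic, because $U$ is faithful, strict, and reflects endemic fibre products (by \ref{thm:geomct-endfps}).

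The second step handles the strictness equality $\gamma T = T_* \gamma$. On objects, this says $(TM, K_{TM}) = T_*(M, K_M) = (TM, (K_M)_T)$, i.e. $K_{TM} = (K_M)_T$. By Remark \ref{rem:hcx-for-tb}, $(K_M)_T = T(c_M)\, c_{TM}\, T(K_M)\, c_M$, so postcomposing the equation $K_{TM} = (K_M)_T$ with $c_M$ and using $c_M c_M = 1$ produces exactly the commutative square of condition (2); conversely, (2) recovers $K_{TM} = (K_M)_T$ upon postcomposing with $c_M$ again. Strictness of $\gamma$ on morphisms is automatic since $\gamma(f) = f$.

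The third step verifies that coassociativity is automatic from the preceding data. Both composites $\delta \circ \gamma$ and $\Aff(\gamma) \circ \gamma$ send $M$ to $((M, K_M), K_M)$: the former directly from the definition of $\delta$ in \ref{thm:2-comonad-aff}; the latter from \ref{thm:sm-acx} applied with $\alpha^\gamma = 1$, which yields $(K_M)_\gamma = \gamma(K_M) = K_M$. That $((M, K_M), K_M)$ is a legitimate object of $\Aff(\Aff(\C,T))$---equivalently, that $K_M \colon T_*^2(M, K_M) \to T_*(M, K_M)$ is a flat torsion-free connection in the tangent category $\Aff(\C,T)$---is exactly Corollary \ref{corTKK}(ii), and equality of the composites on morphisms is immediate. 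The converse construction, building $\gamma$ from data satisfying (1) and (2) via $\gamma(M) = (M, K_M)$ and $\gamma(f) = f$, then runs by reversing these steps. The main technical obstacle is simply bookkeeping: isolating which of the many coalgebra and tangent-morphism axioms become automatic (from faithfulness and strictness of $U$ together with the results already established in \ref{corTKK} and \ref{thm:sm-acx}) versus which genuinely encode the proposition's two conditions.
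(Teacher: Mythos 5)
Your proposal is correct and follows essentially the same route as the paper: extract $K_M$ and condition (1) from the counit law, derive strictness of the coalgebra structure map from $UA=1$ together with $U$ being strict and faithful, read off condition (2) as the object-level equality $K_{TM}=(K_M)_T$, and observe that coassociativity is automatic via \ref{corTKK}. The only presentational wrinkle is that you assert $\alpha^\gamma=1$ in your first step before establishing $K_{TM}=(K_M)_T$ in your second, whereas these are really the same computation (the component $\alpha^\gamma_M$ has underlying morphism $1_{TM}$, and its being connection-preserving is exactly what forces the source and target objects to coincide); this does not affect the correctness of the argument.
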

\begin{proof}
Suppose that we are given an $\AAff$-coalgebra $((\C,T,\sF),A)$, i.e. an object $(\C,T,\sF)$ of $\Tan_e$ together with a 1-cell $A:(\C,T,\sF) \rightarrow \Aff(\C,T)$ in $\Tan_e$ making the following diagrams commute in $\Tan$:
\begin{equation}\label{eq:em}
\xymatrix{
(\C,T) \ar@{=}[dr] \ar[r]^(.45){A} & \Aff(\C,T) \ar[d]^{\varepsilon_{(\C,T,\sF)}} & (\C,T) \ar[d]_A \ar[r]^A & \Aff(\C,T) \ar[d]^{A_*}\\
                  & (\C,T) & \Aff(\C,T) \ar[r]_(.45){\delta_{(\C,T,\sF)}} & \Aff(\Aff(\C,T))
}
\end{equation}
Since $U = \varepsilon_{(\C,T,\sF)}$ is the forgetful functor, the unit law $UA = 1$ entails that $A$ must send each object $M$ of $\C$ to an object of the form $(M,K_M)$ with $K_M$ a flat torsion-free connection on $M$.  For each morphism $f$ in $\C$, as in 1, we have $U(A(f)) = f$, so that $A(f) = f:(M,K_M) \rightarrow (N,K_N)$ and 1 holds.

We claim that $A$ is necessarily a \textit{strict} morphism of tangent categories.  Indeed, since $UA = 1_{(\C,T)}$ in $\Tan$ and $U$ is a strict morphism, it follows that the structural isomorphism $\alpha^A:AT \rightarrow T^*A$ has $U(\alpha^A) = 1_T:UAT = T \rightarrow T = UT^*A$, so that each of its components
$$\alpha^A_M:ATM = (TM,K_{TM}) \rightarrow T^*AM = (TM,(K_M)_T)$$
is a morphism in $\Aff(\C,T)$ whose underlying morphism in $\C$ is $1_{TM}$.  Hence $1_{TM}$ preserves the connections $K_{TM}$ and $(K_M)_T$, in the sense that $K_{TM}T(1_{TM}) = T^2(1_{TM})(K_M)_T$, i.e. $K_{TM} = (K_M)_T$, so 2 holds since by definition $(K_M)_T = T(c_M)c_{TM}T(K_M)c_M$ and $c_M = c_M^{-1}$.  We now deduce also that $AT = T^*A$ as functors and that $\alpha^A$ is the identity transformation on $AT$.

Conversely, suppose that $(\C,T,\sF)$ is a tangent category with endemic fibre products and an assignment $M \mapsto K_M$ satisfying 1 and 2.  Then we can define a functor $A:\C \rightarrow \Aff(\C,T)$ on objects by $AM = (M,K_M)$ and on arrows by $A(f) = f$, whereupon $UA = 1_\C$ as functors.  Hence it is immediate that $A$ preserves endemic fibre products, since $U$ reflects endemic fibre products.  In view of the above, 2 asserts precisely that $K_{TM} = (K_M)_T$ for each object $M$, i.e. that $ATM = T^*AM$ as objects.  But it then follows immediately that $AT = T^*A$ as functors, so since $UA = 1_\C$ and $U$ is faithful and is a strict morphism of tangent categories, it follows that $A$ is a strict morphism of tangent categories.  Hence $A$ is a 1-cell in $\Tan_e$, and clearly $UA = 1_{(\C,T,\sF)}$ in $\Tan_e$.  In the rightmost diagram in \eqref{eq:em}, each 1-cell is a strict morphism of tangent categories, so the diagram commutes in $\Tan_e$ as soon as the underlying diagram in $\Cat$ commutes.  Indeed, for each object $M$ of $\C$ we compute that
$$A_*(AM) = A_*(M,K_M) = (AM,(K_M)_A) = ((M,K_M),K_M) = \delta_{(\C,T,\sF)}(AM)$$
since it follows readily from the definitions that $(K_M)_A = K_M:T^2_*(M,K_M) \rightarrow T_*(M,K_M)$, and the commutativity on arrows is immediate. 
\end{proof}

\begin{example}
Given any tangent category with endemic fibre products, $(\C,T,\sF)$, the category of affine geometric spaces $\Aff(\C,T)$ is an affine tangent category, since it is a cofree $\AAff$-coalgebra.
\end{example}

\section{Conclusion and future work}\label{sec:jub}

As noted above, we were largely prompted to investigate affine manifolds by the work of 
Beno\^it Jubin \cite{Jub}. As an indication to future work, we will summarize certain of his results, some of which we will generalize to the setting of tangent categories in a subsequent paper.  Beginning with the tangent functor on the category of smooth manifolds, Jubin proved the following:
\begin{theorem}[Jubin \cite{Jub}]
\emptybox
\begin{itemize}
\item \add{The tangent functor on the category of smooth manifolds carries a unique monad structure}. Using local coordinates, \add{the multiplication} $\mu\colon T^2M\rarr TM$ \add{is given }by

\[\mu\colon T^2M\rarr TM\colon (x,v,\dot{x},\dot{v})\mapsto (x,v+\dot{x})\add{\;.}\]

\noindent The unit $\eta\colon id\rarr T$ is given by the zero section.
\item There are no comonad structures on the tangent functor on the category of smooth manifolds.
\end{itemize}
\end{theorem}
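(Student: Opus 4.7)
My plan is to prove both parts by classifying the relevant natural transformations between iterates of $T$ on the category $\Mf$, and then either pinning down the monad structure uniquely (for the first claim) or deriving an obstruction to any putative comonad structure (for the second). Since every smooth manifold is locally diffeomorphic to an open subset of some $\R^n$ and any such natural transformation is constrained at each manifold by its naturality against all chart embeddings, it suffices to carry out all classification work on Euclidean spaces; alternatively, one could appeal to the Weil-algebra-based classification of natural transformations between iterates of $T$, as in Leung \cite{leungClassifyingTanStructures}.

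For the monad part, I would first show that the only natural transformation $\eta\colon id\rarr T$ is the zero section: writing $\eta_{\R}(x) = (f(x), g(x))$, naturality against translations forces $f(x) = x + d$ with $g$ constant, and naturality against scalar multiplications then forces $d = g = 0$. Next, for $\mu\colon T^2\rarr T$, I would write $\mu_{\R}(x,v,\dot x,\dot v) = (F,G)$; translations force $F = x + h(v,\dot x,\dot v)$ with $G$ independent of $x$, scalings force $h$ and $G$ linear in the tangent coordinates, and naturality against the squaring map $x\mapsto x^2$ generates a quadratic obstruction that forces $h = 0$ and $G = \alpha v + \beta\dot x$ for some constants $\alpha,\beta\in\R$. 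The two unit laws then pin down $\alpha = \beta = 1$, giving precisely the claimed formula; associativity for this specific $\mu$ is a direct verification, establishing both existence and uniqueness.

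For the absence of a comonad structure, the same style of argument first forces any natural counit $\varepsilon\colon T\rarr id$ to coincide with the projection $p$. For any hypothetical comultiplication $\delta\colon T\rarr T^2$, the counit laws $\delta\,T(p) = 1_T = \delta\,p_T$ then force $\delta_{\R}(x,v) = (x,v,v,D(x,v))$; translation and scaling invariance reduce $D$ to $D(x,v) = cv$ for some constant $c$; and naturality of $\delta$ against $x\mapsto x^2$ introduces a $2v^2$ term in the last coordinate of $T^2(x\mapsto x^2)\circ\delta_{\R}$ with no counterpart in $\delta_{\R}\circ T(x\mapsto x^2)$, yielding the desired contradiction. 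The main obstacle is confirming that the coordinate classification on $\R^n$ is genuinely exhaustive --- that no smooth natural transformation evades detection by the finite repertoire of test maps used --- which one handles by combining the chart-embedding argument above with the Weil-algebra perspective to ensure that the relevant spaces of natural transformations are finite-dimensional and accurately probed by the quadratic tests.
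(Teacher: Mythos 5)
The paper itself offers no proof of this theorem: it is quoted verbatim from Jubin's thesis \cite{Jub} in the concluding section, so there is no internal argument to compare against. Your reconstruction is nevertheless correct and is essentially Jubin's own method: classify the natural transformations $\mathrm{id}\Rightarrow T$, $T^2\Rightarrow T$, $T\Rightarrow\mathrm{id}$, $T\Rightarrow T^2$ and then impose the (co)monad axioms. I checked your key computations and they work: translations and scalings reduce $\mu_{\R}$ to $(x,v,\dot x,\dot v)\mapsto(x+h,\,G)$ with $h,G$ linear in $(v,\dot x,\dot v)$; naturality against $x\mapsto x^2$ forces $h=0$ and kills the $\dot v$-coefficient of $G$ (the surviving family is $G=\alpha v+\beta\dot x$); and the two unit laws, evaluated on $0_{TM}$ and $T(0_M)$, give $\alpha=\beta=1$. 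On the comonad side the counit is forced to be $p$, the counit laws force $\delta(x,v)=(x,v,v,cv)$, and the $2v^2$ discrepancy under $x\mapsto x^2$ is exactly the obstruction. Your worry about exhaustiveness of the test maps is the right one to have, and your proposed remedy is also the cleanest packaging of the whole argument: natural transformations $T^a\Rightarrow T^b$ correspond to Weil algebra homomorphisms $\mathbb{D}^{\otimes b}\to\mathbb{D}^{\otimes a}$ with $\mathbb{D}=\R[\epsilon]/(\epsilon^2)$. In that language maps $T^2\Rightarrow T$ form the free two-parameter family $\epsilon_i\mapsto a_i\epsilon$ (recovering $G=\alpha v+\beta\dot x$ with no quadratic term, since $\epsilon^2=0$), whereas a map $T\Rightarrow T^2$ sends $\epsilon\mapsto a\epsilon_1+b\epsilon_2+c\epsilon_1\epsilon_2$ subject to $ab=0$, while the two counit laws demand $a=b=1$ --- the same contradiction your $2v^2$ term detects. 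Two small points to make explicit in a full write-up: the reduction from arbitrary manifolds to $\R$ via naturality against open chart embeddings and against the linear inclusions and projections between $\R$ and $\R^n$; and the associativity check for the surviving $\mu$, which is routine (and is anyway guaranteed by \cite[Prop.\ 3.4]{CC1}, as the paper notes that this monad exists in every tangent category).
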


\add{This monad structure exists in} any tangent category (see \cite[Proposition 3.4]{CC1}).   But 
uniqueness depends crucially on the setting of smooth manifolds as does the lack of comonads. These results 
do not follow from the axioms 
of tangent category (in particular, Jubin's later results show that the tangent category of affine manifolds has multiple monad and comonad structures on its tangent functor).  

\smallskip

Jubin also studies the category of affine manifolds, $\Aff$, whose morphisms are locally affine maps \add{(see \S}\ref{Aff}\add{)}, and shows that the tangent functor on smooth manifolds lifts to an endofunctor on $\Aff$ (as discussed in \S \ref{Aff}).

Working in the smaller category \Aff, one has significantly more freedom to define structures on the tangent
functor. Indeed Jubin completely characterizes \add{all} monad and comonad structures on the tangent functor on \Aff.
See Proposition 3.2.1 of \cite{Jub}. Since \add{this takes place} in a category of smooth manifolds, \add{one} can
define the necessary structural maps using local coordinates. 

\begin{theorem} [Jubin]
\emptybox
\begin{itemize}\label{a-b}
\item The only monad structures on the tangent functor on the category \Aff\ are indexed by the real numbers\add{,} and for a fixed real number $a$ \add{the monad multiplication }is given by the following:

\[\mu^a\colon T^2M\rarr TM\colon (x,v,w,d)\mapsto (x,v+w+ad)\]

\noindent In this case, the unit map for the monad must be the zero section. 
\item The only comonad structures on the tangent functor on the category \Aff\ are indexed by the real numbers\add{,} 
and for a fixed real number $b$ \add{the comultiplication }is given by the following:

\[\delta^b\colon TM\rarr T^2M\colon (x,v)\mapsto (x,v,v,bv)\]

\noindent In this case, the counit map for the comonad is given by the bundle projection.
\end{itemize}
\end{theorem}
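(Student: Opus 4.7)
The plan is to exploit the rigidity of affine coordinates. In each chart $\psi : U \xrightarrow{\sim} V \subseteq \mathbb{R}^n$ of an affine manifold $M$, a point of $T^2 M$ is described by a tuple $(x,v,w,d) \in V \times \mathbb{R}^{3n}$, and since the transition maps between charts are affine this description is well-defined up to the action of the affine group. A candidate monad multiplication $\mu : T^2 \to T$ on the tangent endofunctor of $\Aff$ must in particular be natural with respect to all affine self-maps of the object $\mathbb{R}^n \in \Aff$, so I would begin by classifying the possible forms of the component $\mu_{\mathbb{R}^n}$ and subsequently extend the analysis to all objects.

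Next I would use naturality with respect to translations $\tau_c(x) = x + c$ to force the base-point component of $\mu(x,v,w,d)$ to equal $x$ and, together with naturality under linear automorphisms, to force the candidate unit $\eta_{\mathbb{R}^n}$ to be the zero section. Naturality under invertible linear maps $L:\mathbb{R}^n \to \mathbb{R}^n$ and under scalar multiplication, combined with the fact that $T^2(L)$ acts as $L$ separately on each of $v,w,d$, then forces the fiber component of $\mu(x,v,w,d)$ to take the form $\alpha v + \beta w + \gamma d$ with scalars $\alpha,\beta,\gamma \in \mathbb{R}$ that are independent of $x$. The two unit laws $T(\eta)\mu = 1 = \eta_T \mu$ then impose $\mu(x,v,0,0) = (x,v)$ and $\mu(x,0,w,0) = (x,w)$, forcing $\alpha = \beta = 1$ and leaving $\gamma =: a$ free; a direct coordinate computation then verifies that for every $a \in \mathbb{R}$ the resulting $\mu^a$ satisfies associativity. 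The comonad case is dual: the counit is forced to be the bundle projection by the same translation-and-linearity argument, $\delta_{\mathbb{R}^n}(x,v)$ must take the form $(x, \alpha v, \beta v, \gamma v)$, the counit laws force $\alpha = \beta = 1$, and coassociativity holds for every $\gamma =: b$.

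The main obstacle will be the passage from local-chart naturality to global naturality on all of $\Aff$. One must justify carefully that a natural transformation on $\Aff$ is determined by its restriction to the standard chart $\mathbb{R}^n$ together with all affine endomorphisms, and conversely that the formulas $\mu^a$ and $\delta^b$ genuinely commute with every \emph{locally} affine map and not merely every globally affine one. This is where the definition of morphism in $\Aff$ (smooth maps with vanishing second partial derivatives in each pair of charts) enters essentially, since local affinity is precisely what ensures that $T^2(f)$ acts linearly on the $(v,w,d)$-coordinates with no Hessian contribution disrupting the formulas. Once this local-to-global step is settled, the unit, associativity, counit, and coassociativity axioms all reduce to finite-dimensional computations in $\mathbb{R}^n$.
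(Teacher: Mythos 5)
First, a point of reference: the paper does not prove this theorem — it is quoted from Jubin's thesis (Proposition 3.2.1 of \cite{Jub}) — so your proposal can only be judged on its own terms. Your overall skeleton is the right one: reduce to the components at $\mathbb{R}^n$ by naturality, use equivariance under affine self-maps (translations, $GL(n)$, scalars) to force the fibre part into the form $\alpha v+\beta w+\gamma d$, apply the (co)unit laws, and then handle the local-to-global step. The local-to-global concern you flag is indeed settled as you suggest: every object of $\Aff$ is covered by open subobjects isomorphic in $\Aff$ to open subsets of $\mathbb{R}^n$, and locally affine maps have vanishing Hessian in charts, so $T^2f$ acts by $Df$ on each of $v,w,d$ with no second-order correction. (A smaller omission: you should also invoke naturality for affine maps between \emph{different} dimensions to see that the scalars $\alpha,\beta,\gamma$ do not depend on $n$.)

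There is, however, a genuine gap at the step where you claim that naturality under translations forces the base-point component of $\mu(x,v,w,d)$ to equal $x$, and likewise that ``the same translation-and-linearity argument'' forces the counit to be the bundle projection. Translation-naturality only gives $\mu_0(x+c,v,w,d)=\mu_0(x,v,w,d)+c$, hence $\mu_0(x,v,w,d)=x+m_0(v,w,d)$ with $m_0=\alpha_0 v+\beta_0 w+\gamma_0 d$ by the same equivariance argument you use for the fibre part; the unit laws kill $\alpha_0$ and $\beta_0$ but not $\gamma_0$. This is not a removable technicality: for every pair $(e,a)\in\mathbb{R}^2$ the formula
\[ (x,v,w,d)\mapsto (x+e\,d,\; v+w+a\,d), \]
together with the zero section as unit, satisfies \emph{all} of the monad axioms (a short multi-index computation shows both associates equal $\bigl(x+e(x_{12}+x_{13}+x_{23})+ea\,x_{123},\; x_1+x_2+x_3+a(x_{12}+x_{13}+x_{23})+(e+a^2)x_{123}\bigr)$) and is natural for every affine map $\mathbb{R}^n\rightarrow\mathbb{R}^m$; similarly $\epsilon(x,v)=x+cv$ with $\delta(x,v)=(x,0,0,v/c)$ satisfies the comonad axioms for every $c\neq 0$ on the full subcategory spanned by the $\mathbb{R}^n$'s. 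What eliminates these extra parameters is that the components must exist at \emph{every} object of $\Aff$, in particular at a bounded open subset $V\subsetneq\mathbb{R}^n$: naturality for the inclusion $V\hookrightarrow\mathbb{R}^n$ forces $\mu_V$ (resp.\ $\epsilon_V$) to be the restriction of $\mu_{\mathbb{R}^n}$ (resp.\ $\epsilon_{\mathbb{R}^n}$), and $x+e\,d$ (resp.\ $x+cv$) fails to land in $V$ unless $e=0$ (resp.\ $c=0$). In other words, it is the presence of geodesically incomplete affine manifolds in $\Aff$ that pins down the base components; without this argument your classification would be off by one parameter in each case, and you need to add it.
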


Furthermore, Jubin discovered that \add{the above} monad and comonad structures interact to form bimonads 
\cite{MW}. These are endofunctors with monad and comonad structure as well as a {\it mixed 
distributive law} that relates the two structures and satisfies some additional axioms.   

\begin{theorem}[Jubin]\label{thm:jub_bimnd} The tangent functor $T$ equipped with its $a$-monad structure  
and $b$-comonad structure (Theorem \ref{a-b}) is a bimonad. (For relevant categorical definitions, see \cite{MW}).
The formula for the mixed distribution is given by (using local coordinates):
 
\[\lambda^{a,b}\colon T^2M\rarr T^2M\colon(x,v,w,d) \mapsto(x,w,v+w + ad,bw-d)\]
\end{theorem}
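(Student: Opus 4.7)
The plan is to verify directly, in local affine coordinates on an affine manifold $M$, that $\lambda^{a,b}$ is a well-defined morphism in $\Aff$ and satisfies the axioms making $(T,\mu^a,\eta,\delta^b,\epsilon,\lambda^{a,b})$ a bimonad in the sense of \cite{MW}. Because every map in sight is affine-linear in the fibre coordinates, each axiom reduces to a short identity among polynomial expressions.

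First I would confirm that $\lambda^{a,b}:T^2M\to T^2M$ is a morphism in $\Aff$. In local affine coordinates $(x,v,w,d)$ on $T^2M$ the formula $\lambda^{a,b}(x,v,w,d) = (x,w,v+w+ad,bw-d)$ is linear in $(v,w,d)$ with constant coefficients, so it is locally affine; and since an affine transition $\psi:U\to U'$ on $M$ induces a transition on $T^2M$ whose behaviour in fibres consists of block-diagonal copies of the constant Jacobian $D\psi$, the formula patches consistently across overlapping charts.

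Next I would verify the four mixed-distributive-law axioms
\begin{align*}
\lambda\circ\eta T &= T\eta, & \epsilon T\circ\lambda &= T\epsilon,\\
\lambda\circ\mu^a T &= T\mu^a\circ\lambda T\circ T\lambda, & \delta^b T\circ\lambda &= T\lambda\circ\lambda T\circ T\delta^b.
\end{align*}
The first two drop straight out of the formula: for instance $\lambda^{a,b}(x,v,0,0) = (x,0,v,0) = T\eta(x,v)$, and the second projection of $\lambda^{a,b}(x,v,w,d)$ is $w$, matching $T\epsilon$. The latter two reduce to direct coordinate computations using the chain-rule expression for $Tf$ applied to a map $f$ affine in fibres; since all structural maps have constant Jacobian in fibres (and hence vanishing second derivatives), each composite evaluated on a generic $8$-tuple in $T^3M$ expands to a tuple of affine-linear expressions in $a,b$ whose equality can be read off by inspection.

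The main obstacle is bookkeeping rather than conceptual difficulty: elements of $T^3M$ carry eight coordinate components, and the twisted composites $T\lambda\circ\lambda T$ demand careful indexing of which component encodes which order of tangent direction, so the $a$-terms and $b$-terms must be tracked precisely through every intermediate stage. Once the four distributive axioms are checked, the remaining bimonad coherences from \cite{MW} (notably the compatibility of $\mu^a$ and $\eta$ with the comonad structure through $\lambda^{a,b}$) reduce in exactly the same way to elementary identities in the parameters $a$ and $b$, requiring no new techniques.
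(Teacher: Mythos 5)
The paper does not actually prove this theorem: it is quoted from Jubin's thesis \cite{Jub} (see Proposition 3.2.1 there and the surrounding material), with \cite{MW} cited only for the definition of bimonad, so there is no in-paper argument to compare yours against. That said, your plan --- direct verification of the entwining axioms in local affine coordinates, using the fact that every structural map is fibrewise linear with constant coefficients and that locally affine maps have vanishing Hessian (so $T^2f$ and $T^3f$ act block-diagonally by $Df$ on fibre coordinates, forcing naturality and chart-independence of $\lambda^{a,b}$) --- is exactly the way the result is established at its source, and your two explicit checks are correct: $\lambda^{a,b}(x,v,0,0)=(x,0,v,0)=T\eta(x,v)$ and $p_{TM}\bigl(\lambda^{a,b}(x,v,w,d)\bigr)=(x,w)=Tp(x,v,w,d)$. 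The caveat is that your proposal defers precisely the steps where all the content (and all the risk of error) lives: the two pentagon-type axioms on $T^3M$ require you to fix, once and for all, which of the three tangent-functor applications each of the eight coordinates belongs to, since $\lambda T$ and $T\lambda$ act on different pairs of slots and the $c$-type reshuffling is easy to get wrong; and a Mesablishvili--Wisbauer bimonad requires, beyond the mixed distributive law, the compatibility $\delta^b\circ\mu^a = T\mu^a\circ\lambda^{a,b}T\circ T\delta^b$ together with the unit/counit conditions ($\epsilon\circ\eta=\mathrm{id}$, $\epsilon\circ\mu^a=\epsilon\circ T\epsilon$, $\delta^b\circ\eta=T\eta\circ\eta$), which you mention only in passing. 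None of these would fail --- they are all affine-linear identities in $a$ and $b$ --- but until they are written out the argument is a plan rather than a proof, and it is exactly these computations that single out the displayed formula for $\lambda^{a,b}$ among all candidate fibrewise-linear maps $T^2M\rightarrow T^2M$.
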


In a sequel, we will present an abstraction of certain of these results to the tangent category setting.
The consequences of the existence of these structures at this abstract level remain to be studied.
	
\bibliographystyle{plain}


\end{document}